\providecommand{\U}[1]{\protect\rule{.1in}{.1in}}
\newtheorem{theorem}{Theorem}
\newtheorem{corollary}[theorem]{Corollary}
\newtheorem{definition}[theorem]{Definition}
\newtheorem{hypothesis}[theorem]{Hypothesis}
\newtheorem{lemma}[theorem]{Lemma}
\newtheorem{proposition}[theorem]{Proposition}
\newtheorem{remark}[theorem]{Remark}
\newenvironment{proof}[1][Proof]{\noindent\textbf{#1.} }{\ \rule{0.5em}{0.5em}}
\newcommand{\R} {\ensuremath {\mathbb{R}}}
\newcommand{\N} {\ensuremath {\mathbb{N}}}
\newcommand{\E} {\ensuremath {\mathbb{E}}}
\newcommand{\Q} {\ensuremath {\mathbb{Q}}}
\newcommand{\calL} {\ensuremath {\mathcal{L}}}
\newcommand{\PP} {\ensuremath {\mathbb{P}}}
\newcommand{\calF} {\ensuremath {\mathcal{F}}}
\newcommand{\dd}  {\ensuremath{\mathrm{d}}}
\def\hh{\vskip 2mm \noindent}
\def\vv{\vskip 2mm }
\begin{document}

\title{Regularity of Stochastic Kinetic Equations}
\author{Ennio Fedrizzi \footnote{I2M, Aix-Marseille Univ, CNRS, UMR 7373, Centrale Marseille, 13453 Marseille, France. Email address: {\tt ennio.fedrizzi@centrale-marseille.fr}.}, 
Franco Flandoli\footnote{Dipartimento di Matematica, Largo Bruno Pontecorvo 5, Universit\`a 
  di Pisa, 56127 Pisa, Italy. Email address: {\tt flandoli@dma.unipi.it}.}, 
Enrico Priola \footnote{ Dipartimento di Matematica, Università di Torino, via Carlo Alberto 10, 10123 Torino, Italy. Email address: {\tt enrico.priola@unito.it}}, 
Julien Vovelle \footnote{Univ Lyon, Université Claude Bernard Lyon 1, CNRS UMR 5208, Institut Camille Jordan, 43 blvd. du 11 novembre 1918, F-69622 Villeurbanne cedex, France    Email address: {\tt vovelle@math.univ-lyon1.fr}}}
\date{}

\maketitle

\abstract{ We consider regularity properties of stochastic kinetic equations with multiplicative noise and drift term which belongs to a space of mixed regularity ($L^p$-regularity in the velocity-variable and Sobolev regularity in the space-variable).
 We prove that, in contrast with the deterministic case,
 the SPDE  admits a  unique  weakly differentiable solution 
 which preserves a certain degree of Sobolev regularity of the initial condition 
 without developing discontinuities.
To prove the result  we also study  the related degenerate Kolmogorov equation in Bessel-Sobolev spaces and construct a suitable stochastic flow. }

\section{Introduction }

We consider the linear Stochastic Partial Differential Equation (SPDE) of
kinetic transport type 
\begin{equation}
\dd_{t}f+\left(  v \cdot D_{x}f+F\cdot D_{v}f\right)  \dd t+D_{v}f\circ
\dd W_{t}=0,\qquad f \big|_{t=0}=f_{0}\label{SPDEintro}%
\end{equation}
and the associated stochastic characteristics described by the stochastic
differential equation (SDE)
\begin{equation}
\label{eq-SDE}
\begin{cases}
\dd X_t &  =V_t \dd t,\qquad \dd V_t = F \left(  X_t,V_t \right)  \dd t + \dd W_{t} \\
X\left(  0\right)   &  =x_{0}, \;\;\qquad V\left(  0\right)  =v_{0}.
\end{cases}
\end{equation}
Here $t\in\left[  0,T\right]  $, $\left(  x,v\right)  \in\mathbb{R}^{d}%
\times\mathbb{R}^{d}$, $f:\left[  0,T\right]  \times\mathbb{R}^{d}%
\times\mathbb{R}^{d}\rightarrow\mathbb{R}$,  $f_{0}:\mathbb{R}^{d}%
\times\mathbb{R}^{d}\rightarrow\mathbb{R}$,
$F:\mathbb{R}^{d}
\times\mathbb{R}^{d}\rightarrow\mathbb{R}^d$,
$x_{0},v_{0}\in\mathbb{R}^{d}$ and
$\left(  W_{t}\right)  _{t\geq0}$ is a $d$-dimensional Brownian motion defined
on a complete filtered probability space $\big(  \Omega,\mathcal{F},\left(
\mathcal{F}_{t}\right)  _{t\geq0},\PP \big)  $;\ the operation $D
_{v}f\circ \dd W_{t}=\sum_{\alpha=1}^{d}\partial_{v_{\alpha}}f\circ
\dd W_{t}^{\alpha}$ will be understood in the Stratonovich sense, in order to
preserve (a priori only formally) the relation $\dd f\left(  t,X_{t}%
,V_{t}\right)  =0$, when $\left(  X_{t},V_{t}\right)  $ is a solution of the
SDE; we use Stratonovich not only for this mathematical 
convenience, but also
because, in the spirit of the so called Wong-Zakai principle, the Stratonovich
sense is the natural one from the physical view-point as a limit of correlated
noise with small time-correlation. The physical meaning of the SPDE
(\ref{SPDEintro}) is the transport of a scalar quantity described by the function
$f\left(  t,x,v\right)  $ (or the evolution of a density $f\left(
t,x,v\right)  $, when $\operatorname{div}_{v}F=0$, so that $F D
_{v}f=\operatorname{div}_{v}\left(  Ff\right)  $), under the action of a fluid
- or particle - motion described by the SDE (\ref{eq-SDE}), where we have two
force components: a ``mean" (large scale) component $F\left(  x,v\right)  $,
plus a fast fluctuating perturbation given by $\frac{\dd W_{t}}{\dd t}$. Under
suitable assumptions and more technical work one can consider more elaborate
and flexible noise terms, space dependent, of the form $\sum_{k=1}^{\infty
}\sigma_{k}\left(  x\right)  \dd W_{t}^{k}$ (see \cite{CoFl},
\cite{DelarueFlaVincenzi}\ for examples of assumptions on a noise with this
structure and \cite{FalGawVerg} for physical motivations), but for the purpose
of this paper it is sufficient to consider the simplest noise $\dd W_{t}%
=\sum_{k=1}^{d}e_{k} \, \dd W_{t}^{k}$, $\left\{  e_{k}\right\}  _{k=1,...,d}$ being
an orthonormal base of $\mathbb{R}^{d}$.

Our aim is to show that noise has a regularizing effect on both the SDE
(\ref{eq-SDE}) and the SPDE (\ref{SPDEintro}), in the sense that it provides results
of existence, uniqueness and regularity under assumptions on $F$ which are
forbidden in the deterministic case. Results of this nature have been proved
recently for other equations of transport type, see for instance
\cite{FGP10}, \cite{FF13b}, \cite{Fla Saint Flour}, \cite{BFGM14}, but here,
for the first time, we deal with the case of ``degenerate" noise, because
$\dd W_{t}$ acts only on a component of the system. It is well known that the
kinetic structure has good ``propagation" properties from the $v$ to the $x$
component; however, for the purpose of regularization by noise one needs
precise results which are investigated here for the first time and are
technically quite non trivial. 
Let us describe more precisely the result proved here. First, we investigate
the SDE (\ref{eq-SDE}) under the assumption (see below for more details) that
$F$ is in the mixed regularity space   $L^{p}\left(  \mathbb{R}_{v}^{d};W^{s,p}\left(
\mathbb{R}_{x}^{d}; \R^d\right)  \right)  $ for some $s\in (\frac{2}{3},1)$ and $p > 6d$;
this means that we require
$$
\int_{\R^d} \| F(\cdot , v) \|_{ W^{s, p} 
}^p \,  \dd v< \infty \, ,
$$
where $W^{s, p}  = W^{s, p} (\R^{d}; \R^d)$ is a fractional Sobolev space
(cf. Hypothesis \ref{hyp-holder} and the comments after this assumption; see also Sections 3.1 and 3.2 for more details).
Thus our drift is only $L^p$ in the ``good'' $v$-variable in which the noise acts and has Sobolev regularity in the other $x$-variable. This is particularly clear   in the special case of 
\begin{equation}
\label{dopo}
F(x,v) = \varphi (v) G(x) \, ,
\end{equation}
where $G \in W^{s, p} (\R^{d}; \R^d)$, $\varphi \in L^p(\R^d)$ and 
 $p > 6d$ with $s\in (\frac{2}{3},1).$
 Just to mention 
in the  case of full-noise action, the best known assumption to get pathwise uniqueness (cf. \cite{KR05}) is that  $F$ must belong to  $L^p (\R^{N}; \R^N)$, $p>N$ (in our case $N= 2d$).

According to a general scheme  (see \cite{V}, \cite{KR05}, \cite{FGP10}, \cite{FF11}, \cite{Fla Saint Flour} \cite{P1}, \cite{FF13a}, \cite{FF13b}, \cite{BFGM14}, \cite{CdR}, \cite{WaZh1}, \cite{WaZa2})
to study regularity  properties of the stochastic characteristics 
one   first needs to establish  precise regularity results for solutions to associated  Kolmogorov 
equations. In our case such equations are
 degenerate elliptic equations
 of the type 
\begin{equation}
\label{kol2}
  \lambda {\psi} (x,v) - \frac{1}{2} \triangle_v {\psi}(x,v) - v \cdot D_x {\psi} (x,v) 
 - F(x,v) \cdot D_v {\psi}(x,v) =    g(x,v) \, ,  
\end{equation} 
where $\lambda >0$ is 
given (see Section \ref{sec: regularity Bessel}). 
We prove an optimal regularity result 
 for \eqref{kol2} 
in Bessel-Sobolev spaces 
(see  Theorem \ref{PDE-1!}).  Such result requires   basic $L^p$-estimates proved in  \cite{Bo} and \cite{BCLP} and   non-standard interpolation techniques for functions from $\R^d$ with values in Bessel-Sobolev  spaces (see in particular the proofs of Theorem \ref{cos} and Lemma \ref{corsa}).

The results of Section \ref{sec:PDE} are exploited in Section \ref{sec:SDE} to prove existence of strong solutions to \eqref{eq-SDE} and pathwise uniqueness. Moreover, we can also construct a continuous stochastic flow, injective and surjective, hence a flow of homeomorphisms. 
These maps are locally $\gamma$-H\"{o}lder continuous
for every $\gamma\in\left(  0,1\right)  $. We cannot say that they are
diffeomorphisms;\ however, we can show that for any $t$ and $\PP$-a.s. the random variable $Z_t=(X_t, V_t)$ admits a distributional derivative with respect to $z_0=(x_0,v_0)$. Moreover, for any $t$ and $p >1$, the weak derivative $D_z Z_t \in L^p_{loc} (\Omega \times \R^{2d})$ (i.e., $D_z Z_t \in L^p (\Omega \times K)$, for any compact set $K \subset \R^{2d}$; see Theorem \ref{teo SDE-differentiable flow}).  These results are a generalization to the kinetic (hence degenerate noise) case of theorems in \cite{FF13b}.

Well-posedness for kinetic SDEs \eqref{eq-SDE} 
with non-Lipschitz drift has been recently investigated: strong existence and uniqueness  have  been recently proved in  
 \cite{CdR}  and \cite{WaZh1}. 
Moreover, a stochastic flow of diffeomorphisms 
has  been obtained in \cite{WaZa2} even with a multiplicative noise. In \cite{WaZh1} and \cite{WaZa2} the drift  is assumed to be $\beta$-H\"{o}lder continuous in the $x$-variable with $\beta>\frac{2}{3}$ and Dini continuous in the $v$-variable. The results here are more general  even concerning the regularity in the $x$-variable (see also Section 2.1). We stress  that  well-posedness is not true without noise, as the counter-examples given by Propositions \ref{Prop 1 Appendix} and \ref{Prop 2 Appendix}  show. 

Based on our results on the stochastic flow, 
we prove in Section \ref{sec: SKE} that if the initial condition $f_{0}$
is sufficiently smooth, the SPDE 
(\ref{SPDEintro}) admits a   weakly differentiable solution and provide a representation formula (see Theorem \ref{Theo regul}). Moreover,  the solution of equation (\ref{SPDEintro}) in the spatial variable is of class
$W_{loc}^{1,r}\left(  \mathbb{R}^{2d}\right)  $, for every
$r\geq1$, $\PP$-a.s., at every time $t\in\left[  0,T\right]  $.\ Such regularity
result is not true without noise: \ Proposition \ref{Prop 2 Appendix} gives an
example where solutions develop discontinuities from smooth initial
conditions and  with drift in the class considered here.
Moreover,  assuming in addition that $\mathrm{div}_v F \in L^\infty(\R^{2d})$
we prove  uniqueness of weakly differentiable solutions (see Theorem \ref{teo uniqueness}).

The results presented here may also serve as a preliminary for the
investigation of properties of interest in the theory of
kinetic equations, where again we see a regularization by noise. In a
forthcoming paper we shall investigate the mixing property%
\[
\left\Vert f_{t}\right\Vert _{L_{x}^{\infty}\left(  L_{v}^{1}\right)  }\leq
C\left(  t\right)  \left\Vert f_{0}\right\Vert _{L_{x}^{1}\left(
L_{v}^{\infty}\right)  }, 
\]
with $C\left(  t\right)  $ diverging as $t\rightarrow0$, to see if it holds when the noise is present in comparison to the deterministic case (cf. \cite{GS02} and \cite{H-K}). Again the theory of
stochastic flows, absent without noise under our assumptions, is a basic
ingredient for this analysis.


The paper is constructed as follows. We begin by introducing in the next section some necessary notation and presenting some examples that motivate our study. In Section \ref{sec:PDE} we state some well-posedness results for an associated degenerate elliptic equation (see Theorem \ref{PDE-1!}, which contains the main result of this section). These results will be used in Section \ref{sec:SDE} to solve the stochastic equation of characteristics associated to \eqref{SPDEintro}. This is a degenerate stochastic equation, but we can prove existence and uniqueness of strong solutions (see Theorem \ref{strong 1!}), generating a weakly differentiable flow of homeomorphisms (see Theorems \ref{teo-flow} and \ref{teo SDE-differentiable flow}). Using all these tools, we can finally show in Section \ref{sec: SKE} that the stochastic kinetic equation \eqref{SPDEintro} is well-posed in the class of weakly differentiable solutions.

\section{Notation and Examples}\label{sec preliminaries}

We will either use a dot or $\langle \ ,\, \rangle$ to denote the scalar product in $\R^d$ and $|\cdot|$ for the Euclidian norm. Other norms will be denoted by $\| \cdot \|$, and for the sup norm we shall use both $\| \cdot \|_\infty$ and $\| \cdot \|_{L^\infty(\R^{d})}$. $C_b(\R^{d})$ denotes the Banach space of all real continuous and bounded functions $f : \R^d \to \R$ endowed with the sup norm; $C_b^1 (\R^{d}) \subset C_b(\R^{d})$ is the subspace of all functions wich are differentiable on $\R^{d}$ with bounded and continuous partial derivatives on $\R^{d}$; for $\alpha\in\R_+\backslash \N$, $C^\alpha (\R^d) \subset C^0$ is the space of $\alpha$-H\"older continuous functions on $\R^d$; $C_c^{\infty} (\R^{d}) \subset C_b(\R^{d})$ is the space of all infinitely differentiable functions with compact support. $C, c, K$ will denote different constants, and we use subscripts to indicate the parameters on which they depend.

Throughout the paper, we shall use the notation $z$ to denote the point $(x,v) \in \R^{2d}$. Thus, for a scalar function $g(z): \R^{2d} \to \R$, $D_z g$ will denote the vector in $\R^{2d}$ of derivatives with respect to all variables $z=(x,v)$, $D_x g \in \R^d$ denotes the vector of derivatives taken only with respect to the first $d$ variables and similarly for $D_v g(z)$. We will have to work with spaces of functions of different regularity in the $x$ and $v$ variables: we will then use subscripts to distinguish the space and velocity variables, as in Hypothesis \ref{hyp-holder}.

Let us state the regularity assumptions we impose on the force field $F$.

\begin{hypothesis}\label{hyp-holder}
  The function $F:\R^{2d}\to \R^d$ is a Borel function such that
 \begin  {equation} \label{fra2}
 \int_{\R^d} \| F(\cdot , v) \|_{ H^{s}_p }^p\, \dd v< \infty
\end{equation}
where  $s \in (2/3, 1)$ and $p > 6d$. We write that
$F \in L^p\left( \mathbb{R}_{v}^{d};H^{s}_p\left(
\mathbb{R}_{x}^{d}; \R^d\right) \right)$.
\end{hypothesis}
{  The Bessel space $H^{s}_p = H^{s}_p (\R^{d}; \R^d)$ is defined by the Fourier transform (see Section 3).
According to Remark \ref{in4}, condition \eqref{fra2}  can also be rewritten
using  the related fractional Sobolev spaces $W^{s,p}(\R^d; \R^d)$ instead of $H^{s}_p (\R^{d}; \R^d)$. In the sequel we will also write $H^{s}_p (\R^{d})$ instead of $H^{s}_p (\R^{d}; \R^d)$ when no confusion may arise.

\subsection{Examples}

Without noise, when $F$ is only in the space 
$L^{p}\left(\mathbb{R}^{d}_v;H_{p}^{s}\left(  \mathbb{R}^{d}_x\right)  \right)  $ for some $s>\frac{2}{3}$ and $p>6d$, the equation for the  characteristics 
\begin{align}
x^{\prime} &  =v,\qquad v^{\prime}=F\left(  x,v\right)  \label{character}\\
x\left(  0\right)   &  =x_{0},\qquad v\left(  0\right)  =v_{0}\nonumber
\end{align}
and the associated kinetic transport equation
\begin{equation}
D_{t}f+v \cdot D_{x}f+F \cdot D_{v} f=0,\qquad f|_{t=0}=f_{0}%
\label{transport det}%
\end{equation}
may have various types of pathologies. We shall mention here some of them in
the very simple case of  $d=1$,
\begin{align} \label{coun}
F\left(  x,v\right)   &  = \pm \theta\left(  x,v\right)  sign\left(  x\right)
\left\vert x\right\vert ^{\alpha}\\ \nonumber
\text{for some }\alpha &  \in\left(  \frac{1}{2},1\right)  ,  \quad  \theta\in
C_{c}^{\infty}\left(  \mathbb{R}^{2}\right)  .
\end{align}

First, note that  this function belongs to $L^{p}\left(
\mathbb{R}_v;H_{p}^{s}\left(  \mathbb{R}_x\right)  \right)  $ for 
for some $s>\frac
{2}{3}$ and $p>6$ (to check this fact one can use the
Sobolev embedding theorem:\ $H_{q}^{1}\left(  \mathbb{R}\right)  \subset
H_{p}^{s}\left(  \mathbb{R}\right)  $ if $\frac{1}{p}=\frac{1}{q}-1+s$).

Thus $F$ satisfies our Hypothesis \ref{hyp-holder}. 
On the other hand 
when $\alpha\in\left(  \frac{1}{2},\frac{2}{3}\right)  $, the function
$sign\left(  x\right)  \left\vert x\right\vert ^{\alpha}$ is not in
$C_{loc}^{\gamma}\left(  \mathbb{R}\right)  $ for any $\gamma>2/3$ and  the
results of \cite{WaZh1}, \cite{WaZa2} do not apply.


 Let us come to the description of the pathologies of characteristics and
kinetic equation when $F\left(  x,v\right)  = \pm \theta\left(  x,v\right)
\left\vert x\right\vert ^{\alpha}$.

\begin{proposition}\label{prop ex1}
\label{Prop 1 Appendix}In $d=1$, if $\theta\in C_{c}^{\infty}\left(
\mathbb{R}^{2}\right)  $, $\theta=1$ on $B\left(  0,R\right)  $ for some
$R>0$, $F\left(  x,v\right)  =\theta\left(  x,v\right)  sign\left(  x\right)
\left\vert x\right\vert ^{\alpha}$, then system (\ref{character}) with initial
condition $\left( x_0 , 0 \right)  $ has infinitely many solutions. In particular,
for small time (depending on $R$ and $\alpha$), $\left( x_{t},v_{t}\right)
=\left( x_0 +  At^{\beta},A\beta t^{\beta-1}\right)  $, with $\left(  \beta
,A\right)  $ satisfying (\ref{beta A}) below, and also $A=0$, are solutions.
\end{proposition}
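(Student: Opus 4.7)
The plan is to exhibit a continuum of solutions by combining the trivial solution with a power-law ansatz. Since inserting the ansatz $(x_0+At^\beta,\,A\beta t^{\beta-1})$ into the second equation $v'=F(x,v)=\mathrm{sign}(x)|x|^\alpha$ (valid in $B(0,R)$ where $\theta\equiv 1$) cannot match unless $x_0=0$, I interpret the initial condition as the origin $(0,0)$. Note first that $(x_t,v_t)\equiv(0,0)$ is automatically a solution because $F(0,0)=0$; this is the ``$A=0$'' branch asserted in the statement.

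Next I would look for a self-similar solution $x_t=At^\beta$, $v_t=A\beta t^{\beta-1}$, on a time interval where the trajectory stays in $B(0,R)$, so that $F$ reduces to $\mathrm{sign}(x)|x|^\alpha$. The first equation $x'=v$ is automatic. The second equation $v'=F(x,v)$ becomes
\[
A\beta(\beta-1)\,t^{\beta-2}=\mathrm{sign}(A)\,|A|^{\alpha}\,t^{\alpha\beta},\qquad t>0\text{ small}.
\]
Matching exponents forces $\beta=\frac{2}{1-\alpha}$, and matching coefficients forces $|A|^{1-\alpha}\,\beta(\beta-1)=1$, with $A$ of either sign: these are the conditions labelled (\ref{beta A}). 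Because $\alpha\in(1/2,1)$, one gets $\beta>4$, and in particular $\beta>2$, a fact I would use next.

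Finally, to produce infinitely many solutions, I would ``delay'' the onset of the self-similar branch. For every $t_{0}\ge 0$, define
\[
(x_t,v_t)=\begin{cases}(0,0),&0\le t\le t_0,\\[2pt] \bigl(A(t-t_0)^\beta,\;A\beta(t-t_0)^{\beta-1}\bigr),&t_0<t\le t_0+\tau,\end{cases}
\]
where $\tau=\tau(R,\alpha,A)>0$ is chosen small enough that the trajectory stays in $B(0,R)$ on $[t_0,t_0+\tau]$, so that $\theta\equiv 1$ along it. Because $\beta>2$, both $x_t$ and $v_t$, together with their first derivatives, vanish at $t=t_0$, so the concatenation is a bona fide classical $C^{1}$ solution of the ODE. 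Varying $t_0\ge 0$ (and the sign of $A$) yields a continuum of pairwise distinct solutions, establishing non-uniqueness.

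The argument is essentially a kinetic lift of the classical Peano non-uniqueness for the scalar ODE $u''=\mathrm{sign}(u)|u|^{\alpha}$ with sub-Lipschitz right-hand side. The only mild point to verify is the admissible existence time for which the trajectory remains in $B(0,R)$, which follows immediately by continuity; no serious obstacle is expected.
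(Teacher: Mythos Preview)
Your proof is correct and follows essentially the same route as the paper: insert the power-law ansatz $x_t=At^\beta$, $v_t=A\beta t^{\beta-1}$ into the system, match exponents ($\alpha\beta=\beta-2$) and coefficients ($A\beta(\beta-1)=\mathrm{sign}(A)|A|^\alpha$) to recover the pair $(\beta,A)$ in (\ref{beta A}). You correctly note that the ansatz only matches when $x_0=0$; the paper's own computation also silently uses $x_t=At^\beta$ without the offset.

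Your treatment is actually somewhat more complete than the paper's proof: the paper's argument exhibits only the three solutions $A=0$ and $A=\pm(\beta(\beta-1))^{-1/(1-\alpha)}$, and relegates the delayed family $(A(t-t_0)^\beta,A\beta(t-t_0)^{\beta-1})1_{t\ge t_0}$ to a remark following the proof. You instead build this continuum directly and justify the $C^1$ gluing at $t_0$ via $\beta>2$ (indeed $\beta>4$ since $\alpha\in(1/2,1)$), which is what genuinely delivers the \emph{infinitely many} solutions claimed in the statement.
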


\begin{proof}
Let us check that $\left(  x_{t},v_{t}\right)  =\left(  At^{\beta},A\beta
t^{\beta-1}\right)  $ with the specified values of $\left(  \beta,A\right)  $
and a small range of $t$, are solutions. We have $x_{t}^{\prime}=v_{t}$,
\begin{align*}
v_{t}^{\prime}-F\left(  x,v_{t}\right)   &  =A\beta\left(  \beta-1\right)
t^{\beta-2}-sign\left(  x_{t}\right)  \left\vert x_{t}\right\vert ^{\alpha}\\
&  =A\beta\left(  \beta-1\right)  t^{\beta-2}-sign\left(  A\right)  \left\vert
A\right\vert ^{\alpha}t^{\alpha\beta}=0
\end{align*}
for $\alpha\beta=\beta-2$ and $A\beta\left(  \beta-1\right)  =sign\left(
A\right)  \left\vert A\right\vert ^{\alpha}$, namely%
\begin{equation}
\beta=\frac{2}{1-\alpha},\qquad A=\pm\left(  \frac{1}{\beta\left(
\beta-1\right)  }\right)  ^{\frac{1}{1-\alpha}}=\pm\left(  \frac{\left(
1-\alpha\right)  ^{2}}{2\left(  1+\alpha\right)  }\right)  ^{\frac{1}%
{1-\alpha}}. \label{beta A}
\end{equation}
\end{proof}

With a little greater effort one can show, in this specific example, that
every solution $\left(  x_{t},v_{t}\right)  $ from the initial condition
$\left(  0,0\right)  $ has, for small time, the form $\left(  x_{t}%
,v_{t}\right)  =\left(  A\left(  t-t_{0}\right)  ^{\beta},A\beta\left(
t-t_{0}\right)  ^{\beta-1}\right)  1_{t\geq t_{0}}$ for some $t_{0}\geq0$, or
it is $\left(  x_{t},v_{t}\right)  =\left(  0,0\right)  $ ($\left(
\beta,A\right)  $ always given by (\ref{beta A})) and that existence and
uniqueness holds from any other initial condition, even from points of the
form $\left(  0,v_{0}\right)  $, $v_{0}\neq0$, around which $F$ is not
Lipschitz continuous. Given $T>0$ and $R>0$ large enough, there is thus, at
every time \thinspace$t\in\left[  0,T\right]  $, a set $\Lambda_{t}%
\subset\mathbb{R}^{2}$ of points  ``reached from $\left(  0,0\right)  $", which
is the set
\[
\Lambda_{t}=\left\{  \left(  A\left(  t-t_{0}\right)  ^{\beta},A\beta\left(
t-t_{0}\right)  ^{\beta-1}\right)  \in\mathbb{R}^{2}:t_{0}\in\left[
0,t\right]  \right\}  .
\]
Using this family of sets one can construct examples of non uniqueness for the
transport equation (\ref{transport det}), because a solution $f\left(
t,x,v\right)  $ is not uniquely determined on $\Lambda_{t}$. However, these
examples are not striking since the region of non-uniqueness, $\cup_{t\geq
0}\Lambda_{t}$, is thin and one could say that uniqueness is restored by a
modification of $f$ on a set of measure zero. But, with some additional effort, it is also possible to construct un example with $F\left(  x,v\right)  = \pm \theta\left(  x,v\right) \left\vert x\right\vert ^{\alpha}$. In this case, for some negative $m$ (depending on $R$ and $\alpha$), one can construct infinitely many solutions $(x_t, v_t)$ starting from any point in a segment $(x_0,0)$, $x_0 \in [m,0)$. Indeed, $(x_t, v_t) = (x_0, 0)$ is a solution, but there are also solutions leaving $(x_0, 0)$ which will have $v_t>0$, at least for some small time interval. Then one obtains that the solution $f(t,x,v)$ is not uniquely determined on a set of positive Lebesgue measure.

More relevant, for a simple class of drift as the one above, is the phenomenon
of loss of regularity. Preliminary, notice that, when $F$ is Lipschitz
continuous, system (\ref{character}) generates a Lipschitz continuous flow
and, using it, one can show that, for every Lipschitz continuous
$f_{0}:\mathbb{R}^{2}\rightarrow\mathbb{R}$, the transport equation
(\ref{transport det}) has a unique solution in the class of continuous
functions $f:\left[  0,T\right]  \times\mathbb{R}^{2}\rightarrow\mathbb{R}$
that are Lipschitz continuous in $\left(  x,v\right)  $, uniformly in $t$.
The next proposition identifies an example with non-Lipschitz $F$ where this
persistence of regularity is lost. More precisely, even starting from a smooth
initial condition, unless it has special symmetry properties, there is a
solution with a point of discontinuity. This pathology is removed by noise,
since we will show that with sufficiently good initial condition, the unique
solution $f(t,z)$ is of class $W_{loc}^{1,r}(\R^2)$ for every $r\ge1$ and $t\in[0,T]$ a.s., hence in particular continuous. However, in the stochastic case, we do not know whether the
solution is Lipschitz under our assumptions, whereas presumably it is under
the stronger H\"{o}lder assumptions on $F$ of \cite{WaZa2}.

\begin{proposition}
\label{Prop 2 Appendix}In $d=1$, if $\theta\in C_{c}^{\infty}\left(
\mathbb{R}^{2}\right)  $, $\theta=1$ on $B\left(  0,R\right)  $ for some
$R>0$, $F\left(  x,v\right)  = \theta\left(  x,v\right)  sign\left(  x\right)
\left\vert x\right\vert ^{\alpha}$, then system (\ref{character}) has a unique
local solution on any domain not containing the origin, for every initial condition. For every $t_{0}>0$ (small enough with
respect to $R$), the two initial conditions $\left(  At_{0}^{\beta}, -A\beta
t_{0}^{\beta-1}\right)  $ with $\left(  \beta,A\right)  $ given by
(\ref{beta A}) produce the solution 
\begin{equation*}
\left(  x_{t},v_{t}\right)  =\left(  A\left(
t_{0}-t\right)  ^{\beta}, \, -A\beta\left(  t_{0}-t\right)  ^{\beta-1}\right)  
\end{equation*}
for $t\in\left[  0,t_{0}\right]$, and 
$
\left(  x_{t_{0}},v_{t_{0}}\right)  =\left(  0,0\right) \,.
$
As a consequence, the transport equation
(\ref{transport det}) with any smooth $f_{0}$ such that $f_{0}\left(
At_{0}^{\beta}, \, -A\beta t_{0}^{\beta-1}\right)  \neq f_{0}\left(  -At_{0}%
^{\beta}, \, A\beta t_{0}^{\beta-1}\right)  $ for some $t_{0}>0$, has a solution 
with a discontinuity at time $t_{0}$ at position $\left(
x,v\right)  =\left(  0,0\right)  $.
\end{proposition}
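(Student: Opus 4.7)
My plan is to handle the three assertions of the proposition in the order given, since they are logically nested: local uniqueness off the origin underpins the construction of the explicit branches, which in turn produces the discontinuity of the transport solution.

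First, for the uniqueness away from $(0,0)$: on any open set $U\subset\R^{2}$ whose closure does not contain the origin, the map $(x,v)\mapsto\mathrm{sign}(x)|x|^{\alpha}$ is $C^{\infty}$, so $F=\theta\cdot\mathrm{sign}(x)|x|^{\alpha}$ is locally Lipschitz on $U$ and the Picard--Lindel\"of theorem yields local existence and uniqueness of (\ref{character}) for any initial datum in $U$, valid as long as the trajectory remains in $U$. Second, I would verify the two explicit solutions by direct substitution, which is just the time-reversed version of the calculation from Proposition~\ref{Prop 1 Appendix}. Setting $x_{t}=A(t_{0}-t)^{\beta}$ gives $x_{t}'=-A\beta(t_{0}-t)^{\beta-1}=v_{t}$ and $v_{t}'=A\beta(\beta-1)(t_{0}-t)^{\beta-2}$; choosing $t_{0}$ small enough that the curve remains inside $B(0,R)$, one has $F(x_{t},v_{t})=\mathrm{sign}(A)|A|^{\alpha}(t_{0}-t)^{\alpha\beta}$, and the ODE $v_{t}'=F(x_{t},v_{t})$ reduces to matching exponents $\beta-2=\alpha\beta$ and coefficients $A\beta(\beta-1)=\mathrm{sign}(A)|A|^{\alpha}$, which are precisely (\ref{beta A}). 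Both signs of $A$ work and by continuity $(x_{t_{0}},v_{t_{0}})=(0,0)$, so the two symmetric initial data listed in the statement are joined to $(0,0)$ at time $t_{0}$ by these explicit branches.

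For the transport equation: away from the origin the flow $\Phi_{t}$ of (\ref{character}) is a local $C^{1}$-diffeomorphism by step one, so $f(t,z):=f_{0}(\Phi_{t}^{-1}(z))$ is a classical solution of $D_{t}f+v\,\partial_{x}f+F\,\partial_{v}f=0$ wherever $\Phi_{t}^{-1}$ is defined, and hence a distributional solution on the complement of a singular set of Lebesgue measure zero. Set
\begin{equation*}
z_{\pm}:=(\pm A t_{0}^{\beta},\,\mp A\beta t_{0}^{\beta-1}),\qquad A>0 \text{ from (\ref{beta A})}.
\end{equation*}
Each branch $t\mapsto(\pm A(t_{0}-t)^{\beta},\mp A\beta(t_{0}-t)^{\beta-1})$ stays off the origin for $t\in[0,t_{0})$ with nowhere vanishing velocity, so the flow is smooth on a tubular neighbourhood of each branch and, by step one, the branches coming from $z_{+}$ and $z_{-}$ are the only characteristics funnelling into $(0,0)$ at time $t_{0}$. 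Consequently, for $z$ close to $(0,0)$, backward integration from $(t_{0},z)$ lands either near $z_{+}$ or near $z_{-}$ depending on which side of the origin $z$ sits, and the corresponding limits of $f(t_{0},z)$ are $f_{0}(z_{+})$ and $f_{0}(z_{-})$, forcing a jump at $(0,0)$ under the assumption $f_{0}(z_{+})\neq f_{0}(z_{-})$.

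The main technical obstacle is the last step: one must be sure that the two branches really approach $(0,0)$ from different "sides" (so that both limiting values appear in every neighbourhood) and that the backward-characteristics formula $f(t,z)=f_{0}(\Phi_{t}^{-1}(z))$ genuinely defines a distributional solution of (\ref{transport det}) in spite of the singular locus. The first point follows from the fact that the two branches have opposite signs of $x_{t}$ for all $t<t_{0}$, a consequence of the oddness of $\mathrm{sign}(x)|x|^{\alpha}$; the second point is standard because the set of exceptional $z$ has Lebesgue measure zero and $f$ is transported along characteristics off of it.
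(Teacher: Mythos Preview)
Your argument is correct and follows essentially the same route as the paper's proof: local uniqueness away from the origin via Picard--Lindel\"of, direct verification of the explicit branches $\left(A(t_{0}-t)^{\beta},-A\beta(t_{0}-t)^{\beta-1}\right)$ by the same computation as in Proposition~\ref{Prop 1 Appendix}, and the discontinuity deduced from the transport identity along characteristics. The paper presents this last step via the forward identity $f(t,x_{t},v_{t})=f_{0}(x_{0},v_{0})$ evaluated at points near (but not equal to) the two coalescing initial data, whereas you phrase it through the backward flow $f(t_{0},z)=f_{0}(\Phi_{t_{0}}^{-1}(z))$; these are two views of the same mechanism. One minor remark: your claim that the two explicit branches are \emph{the only} characteristics reaching $(0,0)$ at time $t_{0}$ is stronger than what you actually need---it suffices that \emph{some} neighbourhood of $(0,0)$ contains forward images of points arbitrarily close to both $z_{+}$ and $z_{-}$, which follows directly from continuity of the forward flow near those two initial data.
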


\begin{proof}
The proof is elementary but a full proof is lengthy. We limit ourselves to a
few simple facts, without proving that system (\ref{character}) is forward
well posed (locally in time) and the transport equation (\ref{transport det}) is also well posed
in the set of weak solutions. We only stress that the claim $\left(
x_{t_{0}},v_{t_{0}}\right)  =\left(  0,0\right)  $ when
the initial condition is $\left(  At_{0}^{\beta}, \, -A\beta t_{0}^{\beta
-1}\right)  $ can be checked by direct computation (as in the previous
proposition) and the discontinuity of the solution $f$ of (\ref{transport det}%
) is a consequence of the transport property, namely the fact that whenever
$f$ is regular we have
\begin{equation}
f\left(  t,x_{t},v_{t}\right)  =f_{0}\left(  x_{0},v_{0}\right)
\label{transp identity}%
\end{equation}
where $\left(  x_{t},v_{t}\right)  $ is the unique solution with initial
condition $\left(  x_{0},v_{0}\right)  $. Hence we have this identity for
points close (but not equal) to the coalescing ones mentioned above, where the
forward flow is regular and a smooth initial condition $f_{0}$ gives rise to a
smooth solution; but then, from identity (\ref{transp identity}) in nearby
points, the limit%
\[
\lim_{\left(  x,v\right)  \rightarrow\left(  0,0\right)  }f\left(
t_{0},x,v\right)
\]
does not exists if $t_{0}$ is as above and $f_{0}\left(  At_{0}^{\beta}, \,-A\beta
t_{0}^{\beta-1}\right)  \neq f_{0}\left(  -At_{0}^{\beta}, \, A\beta t_{0}%
^{\beta-1}\right)  $.
\end{proof}

\section{Well-posedness for degenerate Kolmogorov equations in Bessel-Sobolev spaces}\label{sec:PDE}

\subsection{Preliminaries on functions spaces and interpolation theory}

Here we collect basic facts on Bessel 
and Besov 
spaces (see \cite{B}, \cite{T} and \cite{S} for more details).  In the sequel if $X$ and $Y$ are real Banach spaces then $Y \subset X$ means that $Y$ is  continuously embedded in $X$.

The Bessel (potential) spaces  are defined as follows (cf. \cite{B} page 139 and \cite{S} page 135). For the sake of simplicity  we only consider  $p \in [2, \infty)$ and $s \in \R_+$.

First one considers the Bessel potential $J^s $,
$$
J^s f = {\cal F}^{-1} [ (1+ |\cdot|^{2})^{s/2}) {\cal F} f] 
$$
where $\cal F$ denotes the Fourier transform of a distribution $f \in {\cal S}' (\R^d)$, $d \ge 1$. Then we introduce
\begin{equation*}
H^{s}_p(\R^d) = \{ f \in {\cal S}' (\R^d) \; :\;  
J^s f \in L^p(\R^d)
 \}
\end{equation*}
(clearly $H^0_p(\R^d) = L^p(\R^d)$). This is a Banach space endowed with the norm $\| f\|_{H^s_p} = $ $ \| J^s f \|_p $, where $\| \cdot\|_p$ is the usual norm of $L^p(\R^d)$
 (we identify functions with coincide a.e.). It can be proved that 
\begin{equation} \label{st}
H^{s}_p (\R^d)= \{ f \in L^p (\R^d) \; :\;   
   {\cal F}^{-1} [ |\cdot|^{s} \,  {\cal F} f] \in L^p(\R^d)  \}
\end{equation}
and an equivalent norm in $H^{s}_p (\R^d)$ is 
\begin{equation*}
 \| f\|_{s,p} =  \| f\|_p +
  \|  {\cal F}^{-1} [ |\cdot|^{s} \,  {\cal F} f]  \|_p 
 \backsimeq  \|  {\cal F}^{-1} [ (1+ |\cdot|^{s}) \,  {\cal F} f]  \|_p. 
\end{equation*} 
To show this characterization one can use  that 
$$
(1 + 4 \pi^2 |x|^2)^{s/2}  = (1+ (2 \pi |x|^s)) \, [{\cal F}\phi(x) +1], \;\;\; x \in \R^d,
$$
for some $\phi \in L^1(\R^d)$ (see page 134 in \cite{S}), and basic properties of convolution and Fourier transform.
We note that
\begin{equation}\label{H=W}
H^{k}_p (\R^d) = W^{k,p} (\R^d)
\end{equation}   
 if $k \ge 0$ is an integer with equivalence of norms (here $W^{k,p} (\R^d)$ is the usual Sobolev space; $W^{0,p}(\R^d) = L^{p}(\R^d)$); see  Theorem 6.2.3 in \cite{B}.  
However if $s$ is not an integer we only have (see Theorem 6.4.4 in \cite{B}  or \cite{S} page 155) 
\begin{equation} \label{den}
 H^{s}_p (\R^d) \subset W^{s,p} (\R^d)
\end{equation} 
where  $W^{s,p} (\R^d)$ is a fractional  Sobolev space (see below). We have (cf. Theorem 6.2.3 in \cite{B}) 
\begin{equation*}
H^{s_2}_p (\R^d) \subset H^{s_1}_p (\R^d)
\end{equation*} 
if $s_2 > s_1$ and, moreover,  $C_c^{\infty} (\R^d)$ is dense in any $H^{s}_p (\R^d)$. 

\smallskip    One can compare Bessel spaces with   Besov spaces $B^{s}_{p, q} (\R^d) $ (see, for instance,  Theorem 6.2.5 in \cite{B}).
Let   $p, q \ge 2$, $s \in (0, 2)$, to simplify notation. 

If $s \in (0,1)$  then  $ B^{s}_{p, q} (\R^d)$ consists of functions $f \in L^p(\R^d)$ such that
$$
[f]_{B^{s}_{p, q}} = \Big ( 
\int_{\R^d} \frac{\dd h}{ |h|^{d + s q} } \Big( \int_{\R^d} 
 |f(x+h) - f(x)|^p \dd x \Big)^{q/p}
\Big)^{1/q} < \infty \, .
$$
Thus we have 
\begin{equation} \label{rft}
  B^{s}_{p, p} (\R^d ) = W^{s,p} (\R^d)
\end{equation}
with equivalence of norms. However if $s =1$, 
$ B^{1}_{p, q} (\R^d)$ consists of all functions $f \in L^p(\R^d)$ such that
$$
[f]_{B^{1}_{p, q}} = \Big ( 
\int_{\R^d} \frac{\dd  h}{ |h|^{d +  q} } \Big( \int_{\R^d} 
 |f(x+2h) - 2 f(x+h) + f(x)|^p \dd x
 \Big)^{q/p}  \Big)^{1/q} < \infty \, .
$$
Thus we only have  $B^{1}_{p, p} (\R^d ) \subset W^{1,p} (\R^d)$. Note that 
$B^{s}_{p, q} (\R^d )$ is a Banach space endowed with the norm: $\| \cdot\|_{p}$ $+[ \cdot]_{B^{s}_{p, q}}  $.
Similarly, if $s \in (1,2)$,  
then  $ B^{s}_{p, q} (\R^d)$ consists of functions $f \in W^{1,p} (\R^d)$ such that
\begin{equation}
\label{r55}
[f]_{B^{s}_{p, q}} = \sum_{i=1}^d\Big ( 
\int_{\R^d} \frac{\dd h}{ |h|^{d + s q} } \Big( \int_{\R^d} 
 |\partial_{x_i} f(x+h) - \partial_{x_i}f(x)|^p \dd x
 \Big)^{q/p}  \Big)^{1/q} < \infty \, .
\end{equation}
Moreover, $C_c^{\infty}(\R^d)$ is dense in any $B^{s}_{p, q} (\R^d)$ and 
\begin{equation} \label{incl} 
B^{s_2}_{p, q}(\R^d) \subset B^{s_1}_{p, q}(\R^d) , \;\;\; 0 < s_1 < s_2 <2,\;\;\; p \ge 2 \, .
\end{equation} 
We also have the following result (cf. Theorem 6.4.4 in \cite{B})
\begin{equation} \label{ma1}
B^{s}_{p, 2} (\R^d) \subset H^s_p(\R^d) \subset B^{s}_{p, p} (\R^d) \, ,
\end{equation} 
$s \in (0,2)$, $p\ge 2$. Next we state  a  result for which we have not  found  a precise reference in the literature. This is useful to give an equivalent formulation to   Hypothesis \ref{hyp-holder} (cf. Remark \ref{in4}).
The proof is given in Appendix. 
\begin{proposition}\label{df3}
Let $p >2$, $s, s' $ such that  $0<s < s' <1$. We have
\begin{equation*}
 W^{s',p} (\R^d)   \subset  B^{s}_{p, 2} (\R^d) \subset  H^s_p(\R^d) \, . 
\end{equation*}
 \end{proposition}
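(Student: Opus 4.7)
The plan is to reduce the proposition to a direct Besov comparison that can be handled by Hölder's inequality. The second inclusion $B^{s}_{p,2}(\R^d)\subset H^s_p(\R^d)$ is already contained in \eqref{ma1}, so only the first inclusion requires work. Since $s,s'\in(0,1)$, the identification \eqref{rft} gives $W^{s',p}(\R^d)=B^{s'}_{p,p}(\R^d)$, and the task reduces to the Besov embedding
\begin{equation*}
B^{s'}_{p,p}(\R^d)\subset B^{s}_{p,2}(\R^d), \qquad 0<s<s'<1,\ p>2.
\end{equation*}
Since the $L^p$--part of the norms is clearly controlled, I only need to compare seminorms.

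Introduce the first-order $L^p$ modulus of continuity $\omega(h):=\|f(\cdot+h)-f(\cdot)\|_{L^p(\R^d)}$. By hypothesis
\begin{equation*}
I_1:=\int_{\R^d}\omega(h)^{p}\,|h|^{-d-s'p}\,\dd h<\infty,
\end{equation*}
and the goal is $I_2:=\int_{\R^d}\omega(h)^{2}\,|h|^{-d-2s}\,\dd h<\infty$. I split $\R^d=\{|h|\le1\}\cup\{|h|>1\}$. On the exterior region the crude bound $\omega(h)\le 2\|f\|_{L^p}$ together with $s>0$ makes $\int_{|h|>1}|h|^{-d-2s}\,\dd h$ finite, so the tail is harmless.

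On the interior region I would rewrite the integrand as a product
\begin{equation*}
\omega(h)^{2}\,|h|^{-d-2s}=\bigl(\omega(h)^{p}\,|h|^{-d-s'p}\bigr)^{2/p}\cdot|h|^{\beta}, \qquad \beta:=\tfrac{2(d+s'p)}{p}-d-2s,
\end{equation*}
and apply Hölder's inequality in $h$ with conjugate exponents $p/2$ and $p/(p-2)$ (available since $p>2$). This yields
\begin{equation*}
\int_{|h|\le1}\omega(h)^{2}\,|h|^{-d-2s}\,\dd h \le I_1^{2/p}\cdot\Bigl(\int_{|h|\le1}|h|^{\beta p/(p-2)}\,\dd h\Bigr)^{(p-2)/p}.
\end{equation*}
A short algebraic check reduces the condition $\beta\cdot p/(p-2)>-d$ (needed for integrability near the origin) to $\tfrac{2p}{p-2}(s'-s)>0$, which holds precisely because $s'>s$ and $p>2$.

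The only real bookkeeping is verifying that the Hölder exponent choice lines up with the assumption $s'>s$; once this algebra is carried out no further machinery (real interpolation, Littlewood--Paley decomposition, etc.) is required beyond Hölder's inequality and the definitions already collected in the section.
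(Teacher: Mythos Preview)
Your proof is correct and follows essentially the same route as the paper: both reduce to the first inclusion via \eqref{ma1}, split into $\{|h|\le1\}$ and $\{|h|>1\}$, and handle the near-origin part by H\"older's inequality with exponents $p/2$ and $p/(p-2)$. Your treatment of the exterior region is a bit cleaner---using the crude bound $\omega(h)\le 2\|f\|_{L^p}$ rather than a second H\"older estimate leading to a $W^{s-\epsilon/2,p}$ seminorm as the paper does---but this is a cosmetic simplification, not a different method.
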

It is important to notice that Besov spaces are real interpolation spaces (for the definition of interpolation spaces  $(X, Y)_{\theta, q}$ with $X$ and $Y$ real Banach spaces and $Y \subset X$ see Chapter 1 in \cite{L} or \cite{B}). As a particular case  of   Theorem 6.2.4 in \cite{B} we have
 for $0\le s_0 < s_1 \le 2 $, $\theta \in (0,1)$, $p \ge 2$,
\begin{equation} \label{ber1}
(H^{s_0}_p (\R^d), H^{s_1}_p (\R^d))_{\theta, p} =  B^{s}_{p, p} (\R^d)
\end{equation} 
 with $s = (1- \theta) s_0 + \theta s_1$.  Moreover, it holds (see Theorem 6.4.5 in \cite{B}): 
\begin{equation} \label{int11}
(B^{s_0}_{p,p} (\R^d), B^{s_1}_{p,p} (\R^d))_{\theta, p} =  B^{s}_{p, p} (\R^d)
\end{equation} 
 with $0< s_0 < s_1<2 $, $s = (1- \theta) s_0 + \theta s_1$, $\theta \in (0,1)$.

\subsection{Interpolation of functions with values in Banach spaces }

We follow Section VII in \cite{LP} and \cite{C}. Let $A_0$ be a real Banach space. We will  consider the Banach space $L^p (\R^d ; A_0)$, $1\le p < \infty$, $d \ge 1$.  As usual this consists of all strongly measurable functions $f$ from  $\R^d$ into $A_0$ such that the  real valued function $\| f(x)\|_{A_0}$ belongs to $L^p(\R^d)$. We have
$$
\| f\|_{L^p (\R^d ; A_0)} = \Big( \int_{\R^d} \| f(x)\|_{A_0} \dd x  \Big)^{1/p}, \;\;\; f \in L^p (\R^d ; A_0) \, .
$$
If  $A_1$ is another real Banach spaces with $A_1 \subset A_0$  we  can define  the Banach space
$$
L^p (\R^d ; (A_0, A_1)_{\theta, q} ) \, ,
$$
by using the interpolation space $ (A_0, A_1)_{\theta, q}$,
 $ q \in (1, \infty)$, $p \ge 1$ and $\theta \in (0,1)$. One can prove that 
\begin{equation} \label{in2}
\big( L^p (\R^d ; A_0),  
L^p (\R^d ;  A_1)
\big)_{\theta, q} = L^p (\R^d ; (A_0, A_1)_{\theta, q} ) \, .
\end{equation} 
with equivalence of norms (see \cite{LP} and \cite{C}).  In the sequel we will often use, for $s\ge 0$,
$p \ge 2$, 
 \begin {equation} \label{hsp}
L^p (\R^d ; H^{s}_p (\R^d)) \, . 
\end{equation}
We will often identify this space with the Banach space
 $L^p\left( \mathbb{R}_{v}^{d};H^{s}_p\left(
\mathbb{R}_{x}^{d}\right) \right)$
 of all measurable functions $f(x,v)$,
$f: \R^d \times \R^d \to \R$ such that $f(\cdot, v) \in H^{s}_p (\R^d)$, for a.e. $v \in \R^d$, and, moreover (see \eqref{st})
\begin{equation} \label{h2}
\int_{\R^d} \|f( \cdot,v) \|_{H^{s}_p}^p \, \dd v
=  \int_{\R^d} \dd v \int_{\R^d} |{\cal F}^{-1}_x [ (1+ |\cdot|^{s}) {\cal F}_x f( \cdot,v) ] (x) |^p \dd x < \infty
\end{equation}
(here ${\cal F}_x$ denotes the partial Fourier transform in the $x$-variable; as usual we identify functions which coincide a.e.).   As a norm we  consider 
\begin{equation} \label{chie}
 \| f\|_{L^p (\R^d_v ; H^{s}_p (\R^d_x))}  = \Big( \int_{\R^d} \|f(\cdot,v) \|_{H^{s}_p}^p \dd v \Big)^{1/p}.
\end{equation} 
Also  $L^p (\R^d_v ; L^p (\R^d_x) )$ can be identified with $L^p(\R^{2d})$. Similarly, we can define   
$
L^p (\R^d_v ; B^{s}_{p,p} (\R^d_x)).
$ 
 Using   \eqref{ma1} we have
\begin{equation} \label{fr3}
L^p (\R^d ; H^{s}_p (\R^d)) \subset L^p (\R^d ; B^{s}_{p,p} (\R^d)),
\end{equation} 
  $p \ge 2$, $0< s<2$. Finally using \eqref{in2} and \eqref{int11} we get for $0<s_0 < s_1<2$, $\theta \in (0,1)$, $p\ge 2$,
\begin{equation} \label{int12}
\big( L^p (\R^d ; (B^{s_0}_{p,p} (\R^d)),  
L^p (\R^d ;  (B^{s_1}_{p,p} (\R^d))
\big)_{\theta, p} = L^p (\R^d ;  B^{s}_{p, p} (\R^d) ),
\end{equation} 
 with $s = (1- \theta) s_0 + \theta s_1$. 

In the sequel when no confusion may arise,  we will simply write 
$L^p(\R^d)$ instead of 
$L^p(\R^d ; \R^k)$, $k \ge 1$, $p \in [1, \infty)$. Thus a function $U : \R^d \to \R^k$ belongs to $L^p(\R^d)$ if all its components $U_i \in L^p(\R^d)$, $i =1, \ldots,k$. Moreover, 
$
\| U\|_{L^p} $ $= \Big  ( \sum_{i=1}^k \| U_i\|_{L^p}^p \Big)^{1/p}.
$ 
This convention about vector-valued functions will be used for other function spaces as well.

 \begin{remark} \label{in4} 
 {\em  Proposition \ref{df3} and formula \eqref{den} show that Hypothesis \ref{hyp-holder} is equivalent to the following one:
$F:\R^{2d}\to \R^d$ is a Borel function such that
\begin {equation} \label{ass1}
 \int_{\R^d} \| F(\cdot , v) \|_{W^{s,p}}^p \dd v < \infty \, ,
\end{equation}
where  $s \in (2/3, 1)$
 and $p > 6d.$ 
}
\end{remark}

\subsection{Regularity results  in Bessel-Sobolev spaces}\label{sec: regularity Bessel}

Here $\R^N = \R^{2d}$ and $z = (x,v) \in \R^d \times \R^d$.  Let also $p \in (1, \infty)$, $s \in (0,1)$ and  $\lambda>0$. This section is devoted to the study of the equation
\begin{gather*}
\lambda {\psi} (z) - \frac{1}{2} \triangle_v {\psi}(z) - v \cdot D_x {\psi} (z) 
 - F(z) \cdot D_v {\psi}(z) =    g(z)    \,    \nonumber \\
\qquad = \lambda {\psi} (z)
  - \frac{1}{2} \mathrm{Tr} \big(Q D^2 {\psi} (z) \big) -   \langle Az, 
D{\psi}(z)\rangle   -  \langle B(z), D{\psi}(z)\rangle      \nonumber 
\end{gather*}
 where $ A = \begin{pmatrix}    	0 & \mathbb{I} \\	0 & 0 
       \end{pmatrix} $, 
 $Q = \begin{pmatrix}  0 & 0 \\ 0 & \mathbb{I} \end{pmatrix} $ are $(2d \times 2d)$-matrices, $B = \begin{pmatrix}    0  \\ F    \end{pmatrix}: \R^{2d} \to \R^{2d} \,$.  We shall start  by considering the simpler equation with $B=0$, i.e.,
\begin{equation} \label{uno}
\lambda {\psi} (z) - \frac{1}{2} \triangle_v {\psi}(z) - v \cdot D_x {\psi} (z) 
= \lambda {\psi} (z) - {\cal L} {\psi}(z) = g(z), \;\;\; z \in \R^{2d}.   
\end{equation}
Recall that  $D_v \psi$ and $D_x \psi$ denote respectively the gradient of $\psi$ in the $v$-variables and in the $x$-variables; moreover, $D^2_v \psi$ indicates the Hessian
matrix of $\psi$ with respect to the $v$-variables (we have $\triangle_v {\psi} = {\text Tr}(D^2_v \psi)$).
\begin{definition}\label{deff}{\em 
The  space $X_{p,s}$ consists of all functions $f \in W^{1,p} (\R^{2d})$     
 such 
that  $D^2_v f $ and $v \cdot D_x  f$
belong to $L^p (\R^d_v ;  H^{s}_p (\R^{d}_x))$. Recall that 
$$
\| D^2_v f \|_{L^p (\R^d_v ;  H^{s}_p (\R^{d}_x))}^p = \int_{\R^d }  \sum_{i,j =1}^d \, \, \|\partial^2_{v_i v_j} f(\cdot,v)\|^p_{ H^{s}_p (\R^{d})} \dd v \, .
$$
}
\end{definition}

It turns out that $X_{p,s}$ is a Banach space endowed with the norm:
\begin{equation}
\label{xpss}
\| f \|_{X_{p,s}} =  \| f\|_{W^{1,p} (\R^{2d})} + \| D^2_v f \|_{L^p (\R^d_v ;  H^{s}_p (\R^{d}_x))} 
 + \| v \cdot D_x f \|_{L^p (\R^d_v ;  H^{s}_p (\R^{d}_x))}. 
\end{equation}
If $f \in X_{p,s}$ then
 $(\lambda f - {\cal L} f) \in L^p (\R^d_v ;  H^{s}_p (\R^{d}_x) )$ (see \eqref{uno}). 
With a slight abuse of notation, we will still write $f\in X_{p,s}$ for vector valued functions $f:\R^{2d} \to \R^{2d}$,  meaning that all components $f_i:\R^{2d}\to \R$, $i=1\dots 2d$ belong to $X_{p,s}$.

The following theorem improves results in  \cite{Bo} and \cite{BCLP}. In 
particular  it shows that there exists the weak derivative $D_x {\psi} \in L^p (\R^{2d}) $  so that 
\eqref{uno} admits a strong solution $\psi$ which solves equation \eqref{uno} in distributional sense.

\begin{theorem} \label{cos} Let $\lambda>0,$ $p \ge 2$, $s \in (1/3, 1)$  and  $g \in L^p (\R^d_v ;  H^{s}_p (\R^{d}_x))$. 
 There exists a unique  solution ${\psi} = {\psi}_{\lambda}\in X_{p,s}$ to 
equation \eqref{uno}. Moreover, we have
\begin{equation} \label{prima}
\lambda \| {\psi} \|_{L^p(\R^{2d})}  + \sqrt{\lambda} \| D_v {\psi}\|_{L^p(\R^{2d})}
 +
 \| D^2_v {\psi} \|_{L^p(\R^{2d})}  + \| v \cdot D_x \psi \|_{L^p (\R^{2d})}
 \le 
C \| g \|_{L^p (\R^{2d})}
\end{equation} 
with 
 $C =C(d,p ) >0$ and 
\begin {eqnarray} \label{prima1}
 \| D_x {\psi} \|_{L^p (\R^{2d})}
\le   C(\lambda) \, \| g \|_{L^p (\R^d_v ;  H^{s}_p (\R^{d}_x))},
\end{eqnarray}
with $C(\lambda) = C(\lambda,  s,p,d) >0$ and $C(\lambda) \to 0 $ as $\lambda \to \infty$. In addition there exists  $c = c(s,p,d) >0$ such that
\begin{gather} \label{serve}
 \lambda \| {\psi}\|_{L^p (\R^d_v ;  H^{s}_p (\R^{d}_x))}+ 
  \sqrt{\lambda} \| D_v {\psi}\|_{L^p (\R^d_v ;  H^{s}_p (\R^{d}_x))}
+ \| D^2_v {\psi} \|_{L^p (\R^d_v ;  H^{s}_p (\R^{d_x}))} 
\\ + \| v \cdot D_x {\psi} \|_{L^p (\R^d_v ;  H^{s}_p (\R^{d}_x))}
 \nonumber \le c \| g \|_{L^p (\R^d_v ;  H^{s}_p (\R^{d}_x) ) }
\end{gather} 
\end{theorem}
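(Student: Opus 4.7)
The plan is to reduce the theorem, in three layers, to the $L^p$-theory for the Kolmogorov resolvent equation already available in \cite{Bo} and \cite{BCLP}, the commutation of the Bessel potential $J^s_x$ with $\mathcal{L}$, and real interpolation of vector-valued $L^p$-spaces with Bessel-Sobolev values.

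First I would establish existence, uniqueness and the bound \eqref{prima} without the $\|D_x\psi\|_{L^p}$-term. This follows from the $L^p$-theory of the Kolmogorov operator in \cite{Bo} and \cite{BCLP}, where for smooth $g$ one proves existence of a unique $\psi$ with
\[
\lambda\|\psi\|_p+\sqrt{\lambda}\|D_v\psi\|_p+\|D^2_v\psi\|_p+\|v\cdot D_x\psi\|_p\leq C\|g\|_p,
\]
together with the classical Bouchut-type hypoelliptic gain $\|\psi\|_{L^p_v(H^{2/3}_p)_x}\lesssim\|g\|_p$. A density argument then extends everything to $g\in L^p(\R^d_v;H^s_p(\R^d_x))$.

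Next I would obtain the mixed Bessel-Sobolev estimate \eqref{serve}. The key observation is that the operator $\mathcal{L}=\frac{1}{2}\Delta_v+v\cdot D_x$ is $x$-translation invariant (the coefficient $v$ in the transport term is independent of $x$), so the Fourier multiplier $J^s_x$, which acts only in the $x$-variable, commutes with $\mathcal{L}$. Applying $J^s_x$ to \eqref{uno}, the function $J^s_x\psi$ solves the same equation with datum $J^s_x g$. By \eqref{h2}, $\|J^s_x u\|_{L^p(\R^{2d})}$ is an equivalent norm on $L^p_v(H^s_p)_x$, so applying the previous step to $J^s_x\psi$ yields \eqref{serve}.

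The heart of the theorem, which I expect to be the main obstacle, is the $D_x\psi$ estimate \eqref{prima1}. Writing $T_\lambda g=\psi$, the previous steps show that $T_\lambda:L^p_v(H^s_p)_x\to L^p_v(H^s_p)_x$ with norm $\leq C/\lambda$ (the zero-order term in \eqref{serve}), and $T_\lambda:L^p_v(H^s_p)_x\to L^p_v(H^{s+2/3}_p)_x$ with norm $\leq C$ (applying the Bouchut $2/3$-gain to $J^s_x\psi$). Real interpolation, combined with the vector-valued identity \eqref{in2} and the Besov characterization \eqref{ber1}, gives for every $\theta\in(0,1)$
\[
T_\lambda:L^p_v(H^s_p)_x\longrightarrow L^p_v(B^{s+2\theta/3}_{p,p})_x,\qquad \|T_\lambda\|\leq C\lambda^{\theta-1}.
\]
Since $s>1/3$ we may choose $\theta\in(3(1-s)/2,1)$, which makes $s+2\theta/3>1$ and provides the continuous embedding $L^p_v(B^{s+2\theta/3}_{p,p})_x\subset L^p_v(W^{1,p})_x$ via \eqref{incl} together with \eqref{ma1}. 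This delivers $D_x\psi\in L^p(\R^{2d})$ with $C(\lambda)=C\lambda^{\theta-1}\to 0$ as $\lambda\to\infty$, exactly as required. The subtle point is that the interpolation space $(L^p_v Y_0,L^p_v Y_1)_{\theta,p}$ must be identified with $L^p_v(Y_0,Y_1)_{\theta,p}$: this is precisely the vector-valued interpolation identity \eqref{in2}, which is the "non-standard" ingredient flagged in the introduction and presumably packaged for later use in Lemma \ref{corsa}.
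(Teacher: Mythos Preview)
Your proposal is correct and uses the same three core ingredients as the paper: the $L^p$ maximal estimates of \cite{BCLP}, Bouchut's $H^{2/3}_p$ hypoelliptic gain \cite{Bo}, and the commutation of $J^s_x$ with $\mathcal{L}$. The organization, however, differs from the paper's in a way that is worth pointing out.

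The paper proves \eqref{prima1} \emph{first} and \eqref{serve} \emph{last}. For \eqref{prima1} it performs a two-layer interpolation in the \emph{source} space: from the Bouchut gain it first interpolates (in the target) between $G_\lambda:L^p\to L^p$ with norm $1/\lambda$ and $G_\lambda:L^p\to L^p_v(H^{2/3}_p)_x$ to obtain $G_\lambda:L^p\to L^p_v(W^{2/3-\epsilon,p})_x$ with vanishing constant; then it differentiates in $x$ to get $G_\lambda:L^p_v(W^{1,p})_x\to L^p_v(B^{5/3-\epsilon}_{p,p})_x$; and finally it interpolates these two at level $s\in(0,1)$ in the source to reach $L^p_v(B^{s+2/3-\epsilon}_{p,p})_x\subset L^p_v(W^{1,p})_x$. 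Only afterwards does it prove \eqref{serve} by the $J^s_x$-commutation argument (exactly as you describe).

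You invert the order: you establish \eqref{serve} first via $J^s_x$-commutation, and then derive \eqref{prima1} by a single interpolation in the \emph{target} between the two endpoints $L^p_v(H^s_p)_x\to L^p_v(H^s_p)_x$ (norm $C/\lambda$) and $L^p_v(H^s_p)_x\to L^p_v(H^{s+2/3}_p)_x$ (norm bounded). This is cleaner: it avoids the auxiliary $\epsilon$-loss and the detour through $L^p_v(W^{1,p})_x$ on the source side. Two minor remarks: the embedding $B^{s+2\theta/3}_{p,p}\subset W^{1,p}$ you invoke comes directly from the definition \eqref{r55} of $B^\sigma_{p,p}$ for $\sigma\in(1,2)$, not from \eqref{incl}--\eqref{ma1}; and the Bouchut estimate actually carries a factor $((\lambda+1)/\lambda)^{2/5}$ (see the paper's \eqref{frrr}), so your endpoint ``norm $\leq C$'' holds only for $\lambda$ bounded below---this does not affect the conclusion $C(\lambda)\to 0$. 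Also note that applying \cite{Bo} requires the time-extension trick $f(t,z)=\eta(t)\psi(z)$ that the paper spells out, since Bouchut's corollary is stated for the evolution equation $\partial_t f+v\cdot D_x f\in L^p$.

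Your final remark about Lemma~\ref{corsa} is slightly off: the identity \eqref{in2} is a classical result of Lions--Peetre and Cwikel cited directly in Section~3.2, and Lemma~\ref{corsa} uses the same kind of vector-valued interpolation for a different purpose (regularity of $D_x\psi$ in the $v$-direction), not as a packaging of \eqref{in2}.
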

\begin{proof} \textit{Uniqueness.} Let ${\psi} \in X_{p,s}$ be a solution. 
Multiplying 
both sides of equation \eqref{uno} by $|{\psi}|^{p-2} {\psi} $ and integrating 
by parts we obtain
\begin{gather*}
 \lambda \| {\psi}\|_{L^p(\R^{2d})}^p +  \frac{(p-1)}{2} \sum_{k=1}^d\int_{\R^{2d}} 
|{\psi}|^{p-2} |\partial_{v_k} {\psi}|^2\,  \dd z + \int_{\R^{2d}} (v \cdot D_x {\psi} ) 
|{\psi}|^{p-2} {\psi} \, \dd z \\ =
 \int_{\R^{2d}} g |{\psi}|^{p-2} {\psi} \, \dd z
\end{gather*}
(this identity can be rigorously proved  by approximating ${\psi}$ by smooth 
functions). Note that there exists the weak derivative
$$
 D_x (|{\psi}|^p) = p |{\psi}|^{p-2} {\psi} D_x {\psi} \in L^{1}(\R^{2d})
$$
and so 
$$
 \int_{\R^{2d}} (v \cdot D_x {\psi} ) |{\psi}|^{p-2} {\psi} \, \dd z = 
 \frac{1}{p}\int_{\R^{2d}} v \cdot D_x  (|{\psi}|^{p})  \dd z =0 \, .
$$
It follows easily that 
\begin{equation} 
\label{dis}
 \| {\psi} \|_{L^p (\R^{2d})} \le \frac{1}{\lambda} \| g\|_{L^p (\R^{2d})}
\end{equation} 
which implies uniqueness of solutions for the linear equation \eqref{uno}.

\smallskip 
\noindent \textit{Existence.} {\it I Step.} We prove existence of solutions and estimates  \eqref{prima} and \eqref{prima1}.

Let us first introduce the  Ornstein-Uhlenbeck semigroup
\begin{align}
 P_t g(z) = P_t g(x,v) &=    \int_{\R^{2d}} g (  e^{tA } z +  y) N(0, Q_t) \, \dd y    \label{OU semigr}  \\
  &= \int_{\R^{2d}} g (  x+ t v + y_1, v+ y_2) N(0, Q_t)  \,  \dd y \, , 
 \;\; g \in C_c^{\infty}(\R^{2d}), \; t \ge 0,  \nonumber
\end{align}
 where 
$N(0, Q_t)$ is the Gaussian measure with mean 0 and covariance matrix 
\begin{equation} \label{dia1}Q_t = \int_0^t e^{sA} Q e^{sA^*} \dd s  = \int_0^t e^{sA} \begin{pmatrix}
 0 & 0\\
 0 & \mathbb{I}_{\R^d}
\end{pmatrix}  e^{sA^*} \dd s \, =  \begin{pmatrix}
 \frac{1}{3}  t^3 \mathbb{I}_{\R^d} &  \frac{1}{2}  t^2  \mathbb{I}_{\R^d}\\[3pt]
 \frac{1}{2}  t^2 \mathbb{I}_{\R^d} &   t \mathbb{I}_{\R^d}
\end{pmatrix} \, 
\end{equation} 
($A^*$ denotes the adjoint matrix). 
By the Young inequality (cf. the proof of Lemma 13 in \cite{P}) we know that $P_t g$  is well-defined also for any $g \in L^p(\R^{2d})$,  $z$ a.e.; moreover $P_t : L^p (\R^{2d}) \to L^p (\R^{2d})$, for any $t \ge 0$, and 
\begin{equation}
\label{e44}
\| P_t g \|_{L^p (\R^{2d})} \le \| g\|_{L^p (\R^{2d})}, \;\; g \in L^p (\R^{2d}), \;  t \ge 0. 
\end{equation}
Let us consider, for any $\lambda >0$, $z \in \R^d$, $g \in C_c^{\infty}(\R^{2d})$, 
\begin{equation} \label{gll}
{\psi} (z)= G_{\lambda}g (z) =\int_{0}^{+\infty}  e^{- \lambda t} P_t g (z) 
\, \dd t \, .
\end{equation}
Using the Jensen inequality, the Fubini theorem and \eqref{e44} it is easy to prove that $G_{\lambda} g$ is well defined for $g \in L^p(\R^{2d})$, $z$ a.e., and belongs to $ L^p(\R^{2d})$. Moreover, for any $p \ge 1$,
\begin {equation} \label{gll1}
G_{\lambda} : L^p(\R^{2d}) \to L^p(\R^{2d}),\;\;\;  \; \| G_{\lambda} g \|_{p} \le \frac{\| g\|_p}{\lambda},\;\; \; \lambda >0,\;\; g \in L^p(\R^{2d}).
\end{equation}
Note that $L^p (\R^d_v ;  H^{s}_p (\R^{d_x}) ) \subset L^p (\R^d_v ;  W^{s,p} (\R^{d}_x) )$ (see \eqref{den}).
Let us consider a sequence $(g_{n}) \in C^{\infty}_c(\R^{2d}) $ such that 
$$
g_n \to g  \; \text{in} \; L^p (\R^d_v ;  W^{s,p} (\R^{d}_x) ). 
$$ 
Arguing as in \cite[Lemma 13]{P} one can show that there 
exist classical solutions ${\psi}_n$ to \eqref{uno} with $g$ replaced by $g_n$. 
Moreover, $\psi_n = G_{\lambda } g_n$.
By \cite[Theorem 11]{P}, which is based on results in \cite{BCLP}, we have that 
 \begin{equation} \label{cf5}
\| D_v^2 {\psi}_n \|_{L^p(\R^{2d})} \le C  \| g_n\|_{L^p(\R^{2d})} ,
\end{equation}
 $\lambda>0$, $n \ge 1$, $C = C(p,d)$.  
 Using also \eqref{gll1}
we deduce easily that $({\psi}_n )$ 
 and $(D^2_v {\psi}_n) $ are both Cauchy sequences in $L^p (\R^{2d})$.
Let us denote by ${\psi}  \in L^p (\R^{2d})$
the  limit function; it holds that $\psi = G_{\lambda} g$
and 
$D^2_v {\psi} \in L^p (\R^{2d})$.

Passing to the limit in \eqref{uno} when  ${\psi}$ and $g$ are replaced by 
${\psi}_n$ and $g_n$ we obtain that ${\psi}$ solves \eqref{uno}
 in a weak sense
 ($v \cdot D_x {\psi}$ is intended  in distributional sense). 
By \eqref{cf5} as $n \to \infty$ we also get 
 \begin{gather} \label{f23} 
\| D_v^2 {\psi} \|_{L^p(\R^{2d})} \le C  \| g\|_{L^p(\R^{2d})} ,\;\;\; 
 \| {\psi} \|_{L^p (\R^{2d})} \le \frac{1}{\lambda}  \| g\|_{L^p(\R^{2d})}
\\ \nonumber \text{and}
\;\;\;
 \| v \cdot D_x {\psi}\|_{L^p(\R^{2d})} \le C \,  \| g\|_{L^p(\R^{2d})}.
\end{gather}
To prove \eqref{prima} it remains to show the estimate for $D_v {\psi}$. This 
follows from  
 \begin{equation} \label{by}
\| D_v {\psi}\|_{L^p(\R^{2d})}^p = \int_{\R^d} \| D_v \psi (x, \cdot)\|_{L^p(\R^d)}^p \dd x  
 \le  (\|  {\psi}\|_{L^p(\R^{2d})} )^{p/2} \, ( \| 
D_v^2 {\psi}\|_{L^p(\R^{2d})} )^{p/2}. 
\end{equation}
To prove that $\psi \in W^{1,p} (\R^{2d}) $ it is enough to check  that  
\begin{equation} \label{sol}
{\psi} \in L^p \big( \R^d_v ; W^{1,p} (\R^d_x) \big).
\end{equation} 
Thus we have to prove that   ${\psi}(\cdot , v) \in W^{1,p} (\R^d) $ for a.e. $v$ and 
$$ 
\int_{\R^d} \dd v \int_{\R^d} |D_x {\psi}(x,v)|^p \, \dd x < \infty \, .
$$
To this purpose we will use a result in \cite{Bo} and interpolation theory. We consider $\eta \in C_c^{\infty}(\R)$ such that Supp$(\eta) \subset [-1,1]$ and $\int_{-1}^{1} \eta(t) \dd t >0$. 

Setting $f (t,z) = \eta(t) \psi(z)$, where $\psi$ solves \eqref{uno} we have that  
$f \in L^{p}(\R \times \R^d \times \R^d) $. In order to apply  Corollary 2.2 in \cite{Bo} we  note that, for  $z=(x,v) \in \R^{2d},$ $t \in \R$,
$$
\partial_t f(t,z) +  v \cdot D_x {f} (t,z) = \eta'(t) \psi(z)
- \eta(t) g(z) + \lambda \eta(t){\psi} (z) - \frac{1}{2} \eta(t)\triangle_v 
{\psi}(z). 
$$
Since $D^2_v {\psi} \in  L^p(\R^{2d})$  we deduce that $ \partial_t f +  v \cdot D_x {f}  $ and $D^2_v {f} $ both
belong to $L^{p}(\R \times \R^d \times \R^d)$.

 By Corollary 2.2 in \cite{Bo} and \eqref{f23} we get easily that
${\psi}(\cdot ,v ) \in H^{2/3}_p (\R^d)$, for  $v \in \R^d$ a.e., and 
\begin{equation*}
 \int_{\R^d} \dd v \int_{\R^d} |{\cal F}^{-1}_x [ (1+ |\cdot|^{2/3}) {\cal F}_x 
{\psi} (\cdot, v) ] (x) |^p \, \dd x \le  \Big( \frac{\lambda +1}{\lambda} \Big)^{2p/5}\, c\,  \| g\|_{L^p (\R^{2d})}^p \, ,
\end{equation*} 
$\lambda>0$, with $c= c(p,d)$, i.e.,
\begin{equation} \label{frrr}
{\psi} = G_{\lambda} g \in L^p (\R^d_v ; H^{2/3}_p (\R^d_x)) 
\;\; \text{  and} 
\;\;
 \| G_{\lambda} g \|_{L^p (\R^d_v ; H^{2/3}_p (\R^d_x) )} \; \le \; 
 \Big( \frac{\lambda +1}{\lambda} \Big)^{2/5}\, c_1 \, \| g\|_{L^p(\R^{2d})}.
\end{equation}
By  \eqref{in2}  and \eqref{ber1} with $s_0=0$ and $s_1 = 2/3$ we can interpolate between \eqref{frrr} and the estimate $\| G_{\lambda} g \|_{L^p(\R^d; L^p(\R^d))}  \le 
 \frac{1}{\lambda}\, \| g\|_{L^p(\R^{2d})}$
 (see Proposition 1.2.6 in \cite{L}) and get, for $\epsilon \in (0, 2/3)$, 
\begin{equation}
\label{it5}
\| G_{\lambda} g \|_{ L^p (\R^d_v ; W^{2/3 - \epsilon, p} (\R^d_x))}  \le c_{\epsilon}(\lambda)  \| g\|_{L^p(\R^{2d})}.
\end{equation}   
with $c_{\epsilon}(\lambda) \to 0$ as $\lambda \to \infty$.

Suppose  now that $g \in L^p (\R^d_v ; W^{1, p} (\R^d_x))$ and fix $k =1, \ldots, d$. By approximating $g$ with regular functions, it is not difficult to prove that 
there exists the weak derivative $\partial_{x_k} {\psi}
\in L^p (\R^{2d})$, 
and 
\begin{equation} \label{ct5}
\partial_{x_k} {\psi} (z)= \partial_{x_k} G_{\lambda}g (z) =\int_{0}^{+\infty} e^{- \lambda 
t}  P_t (\partial_{x_k} g)(z) \dd t \, .
\end{equation}
Arguing as in \eqref{it5}  we obtain that  $\partial_{x_k} {\psi} \in L^p (\R^d_v ; W^{2/3 
- \epsilon, p} (\R^d_x))$ and 
\begin{equation*}
\| \partial_{x_k} {\psi} \|_{L^p (\R^d_v ; W^{2/3 - \epsilon, p} (\R^d_x))} \le  
c_{\epsilon}(\lambda) \| \partial_{x_k} g \|_{L^p (\R^{2d})},\;\;\; k = 1, \ldots, d \, ,
\end{equation*} 
so that
\begin{equation}
\label{cia4}
\|  G_{\lambda} g \|_{L^p (\R^d_v ; B^{1+ 2/3 - \epsilon, p}_{p,p} (\R^d_x))} \le  c_{ \epsilon} (\lambda) \| g \|_{ 
L^p (\R^d_v ; W^{1,p} (\R^d_x))}.
\end{equation} 
Taking into account  \eqref{ber1}, \eqref{int11} and \eqref{in2} we can interpolate  between \eqref{it5} and \eqref{cia4} (see also \eqref{hsp} and 
 \eqref{h2}) 
and 
 get 
\begin{gather} \label{ne1ee}
G_{\lambda} :  L^p (\R^d_v ; W^{s, p} (\R^d_x)) =
\big( L^p (\R^d ;  H^{0}_{p} (\R^d)),  L^p (\R^d ;   W^{1,p} (\R^d))
\big)_{s, p}\hspace{20mm} \\  \nonumber 
\hspace{20mm}  \longrightarrow  
 \, \big( L^p (\R^d ;  W^{2/3 - \epsilon,  p} (\R^d) ),
 L^p (\R^d ;  B^{ 5/3 -\epsilon}_ {p,  p} (\R^d) ))_{s,p} =
 L^p (\R^d_v ;  B^{s + 2/3 - \epsilon}_ {p,  p} (\R^d_x)).
\end{gather}
Since $L^p (\R^d_v ;  B^{s + 2/3 - \epsilon}_ {p,  p} (\R^d_x)) \subset  L^p (\R^d_v ;  
W^{1,  p} (\R^d_x))$ with $\epsilon$ small enough (recall that $s \in (1/3,1)$) we finally obtain that
\begin{equation}
\label{dd2}
G_{\lambda} :   L^p (\R^d_v ; W^{s, p} (\R^d_x))  \to
 L^p (\R^d_v ;  W^{1,p} (\R^d_x))
\end{equation}
 is linear and continuous. 
 Moreover, we have  with $\psi = G_{\lambda} g$
\begin{equation} \label{serve4}
\int_{\R^d}  \dd v \int_{\R^d} | \partial_{x_k} {\psi} (x,v)|^p \, \dd x \le 
 C'(\lambda) \, \| g\|^p_{ L^p (\R^d_v ; W^{s, p} (\R^d_x))}
 \le   C''(\lambda) \,  \| g\|^p_{ L^p (\R^d_v ; H^{s}_p (\R^d_x))} \, ,
\end{equation} 
$ k =1, \ldots, d,$ where $C'(\lambda)$ and $C''(\lambda)$ 
tend to 0 
as $\lambda \to \infty$ (recall the estimates 
\eqref{it5} and \eqref{cia4}). This proves \eqref{prima1} and \eqref{sol}.

\smallskip
\noindent {\it II Step.} We prove   the last assertion \eqref{serve}.  

The main problem is to show that 
\begin{gather*}
 \int_{\R^d}  \| D_v^2  {\psi} (\cdot, v) \|_{H^s_p (\R^{d})}^p \dd v =
 \int_{\R^d} \dd v \int_{\R^d} |{\cal F}^{-1}_x [ (1+ |\cdot|^{s}) {\cal F}_x 
D_v^2 {\psi} (\cdot, v) ] (x) |^p \, \dd x
\\ 
=  \int_{\R^d} \dd v \int_{\R^d} |D_v^2 \, \big( {\cal F}^{-1}_x [ (1+ |\cdot|^{s}) {\cal F}_x 
 {\psi} (\cdot, v) ]  \big) (x) |^p \, \dd x \le C  \| g\|^p_{ L^p (\R^d_v ; H^{s}_p 
(\R^d_x))}
\end{gather*}
 (${\cal F}_x$ denotes the Fourier transform in the $x$-variable) with ${\psi} = G_{\lambda }g$. We introduce 
$$
h_s(x,v) = {\cal F}^{-1}_x [ (1+ |\cdot|^{s}) {\cal F}_x 
 g (\cdot, v) ] (x),$$ $x, v \in \R^d$ . We know that $h_s  \in L^p (\R^{2d})$ by our hypothesis on $g$. 
A straightforward computation based on the Fubini theorem shows that  
$$
{\cal F}^{-1}_x [ (1+ |\cdot|^{s}) {\cal F}_x 
 {\psi} (\cdot, v) ] (x) = G_{\lambda} 
h_s (x,v).
$$ 
By using \eqref{f23} (with $g$ replaced by 
$ h_s$ and ${\psi} $ by $G_{\lambda } h_s$) we easily obtain that 
\begin{gather} \label{ne1}
\int_{\R^d}  \| D_v^2  {\psi} (\cdot, v) \|_{H^s_p (\R^{d})}^p \dd v 
 =\int_{\R^d} dv \int_{\R^d} |D_v^2 \, G_{\lambda} h_s (x,v)
 |^p \, \dd x \\
\nonumber
\le C  
\| h_s\|_{L^p(\R^{2d}) }^p
=  C \| g \|_{L^p (\R^d_v ; H^{s}_p (\R^d_x)) }^p,
\end{gather}
where $C = C(d,p)$. Similarly, we have 
\begin{gather} \label{ne11}
\int_{\R^d}  \| D_v  {\psi} (\cdot, v) \|_{H^s_p (\R^{d})}^p \dd v 
 =\int_{\R^d} \dd v \int_{\R^d} |D_v \, G_{\lambda} h_s (x,v)
 |^p \, \dd x 
\\
\nonumber \le \frac{C}{(\lambda)^{p/2} }   
\| h_s\|_{L^p(\R^{2d}) }^p
=  \frac{C}{(\lambda)^{p/2} } \| g \|_{L^p (\R^d_v ; H^{s}_p (\R^d_x)) }^p
\end{gather}
and 
$\| {\psi} \|_{{L^p (\R^d_v ; H^{s}_p (\R^d_x)) }} =
 \| G_{\lambda}  h_s\|_{L^p (\R^{2d})}  \le 
\frac{1}{\lambda} \| G_{\lambda}  h_s\|_{L^p (\R^{2d})} = 
\frac{1}{\lambda} \| g\|_{{L^p (\R^d_v ; H^{s}_p (\R^d_x)) }}.
$
The proof is complete.
\end{proof}

\begin{lemma} \label{neu} Assume as in  Theorem \ref{cos} 
that $g \in   L^p (\R^d_v ; H^{s}_p (\R^d_x))$, $s \in (1/3,1)$. Moreover, 
 suppose that $p>d$.
 Then the solution ${\psi} = G_{\lambda} g $ to \eqref{uno} verifies 
 also
\begin{equation}
\label{nhtt}
 \sup_{v \in \R^d}   \|D_v  \psi (\cdot, v) \|_{ H^{s}_ {p} 
(\R^d)} \le C(\lambda) \| g\|_{ L^p (\R^d_v ; H^{s}_p (\R^d_x))}, \;\; \lambda>0, 
\end{equation} 
 where 
$C(\lambda) \to 0 $ as $\lambda \to \infty$.
 \end{lemma}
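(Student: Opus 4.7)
The plan is to combine the two $L^p(\R^d_v; H^s_p(\R^d_x))$ estimates already produced by Theorem \ref{cos} for $D_v\psi$ and $D_v^2\psi$ with a vector-valued Sobolev/Gagliardo--Nirenberg embedding in the $v$-variable, exploiting the assumption $p>d$.

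First, regard $u(v) = D_v\psi(\cdot, v)$ as a function from $\R^d_v$ into the Banach space $X := H^s_p(\R^d_x)$. Its distributional $v$-gradient is $D_v^2\psi$, which also takes values in $X$. The two displayed bounds in \eqref{serve} give
\[
\|u\|_{L^p(\R^d_v;X)} \le \frac{c}{\sqrt{\lambda}}\,\|g\|_{L^p(\R^d_v;H^s_p(\R^d_x))},\qquad
\|\nabla_v u\|_{L^p(\R^d_v;X)} \le c\,\|g\|_{L^p(\R^d_v;H^s_p(\R^d_x))},
\]
so $u\in W^{1,p}(\R^d_v;X)$ with an explicit dependence on $\lambda$.

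Next, since $p>d$, the classical Morrey/Sobolev argument extends to $X$-valued functions: applying scalar Morrey inequality to $v\mapsto \|u(v)\|_X$ (with the pointwise estimate $|\,\|u(v)\|_X-\|u(v')\|_X\,| \le \|u(v)-u(v')\|_X$ controlled via the Bochner integral of $\nabla_v u$) yields
\[
\|u\|_{L^\infty(\R^d_v;X)} \le C\big(\|u\|_{L^p(\R^d_v;X)} + \|\nabla_v u\|_{L^p(\R^d_v;X)}\big),\qquad p>d.
\]
To upgrade this bound into one that vanishes as $\lambda\to\infty$, I would run the usual scaling trick: replace $u$ by $u_\mu(\cdot)=u(\mu\,\cdot)$, observe that $\|u_\mu\|_{L^\infty}=\|u\|_{L^\infty}$, $\|u_\mu\|_{L^p}=\mu^{-d/p}\|u\|_{L^p}$, $\|\nabla u_\mu\|_{L^p}=\mu^{1-d/p}\|\nabla u\|_{L^p}$, and then optimize in $\mu$. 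This produces the Gagliardo--Nirenberg-type inequality
\[
\|u\|_{L^\infty(\R^d_v;X)} \le C\,\|u\|_{L^p(\R^d_v;X)}^{\,1-d/p}\,\|\nabla_v u\|_{L^p(\R^d_v;X)}^{\,d/p}.
\]

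Plugging the two $L^p$ estimates into this interpolation inequality gives
\[
\sup_{v\in\R^d}\|D_v\psi(\cdot,v)\|_{H^s_p(\R^d)} \le C\,\lambda^{-\frac{1}{2}(1-d/p)}\,\|g\|_{L^p(\R^d_v;H^s_p(\R^d_x))},
\]
and since $p>d$ the exponent $(1-d/p)/2$ is strictly positive, so $C(\lambda)\to 0$ as $\lambda\to\infty$, which is the claim. The only slightly delicate step is checking that Morrey's embedding passes to the Banach-valued setting with $X=H^s_p(\R^d_x)$; this is standard once one notes that $\|u(v)-u(v')\|_X \le \int_0^1 \|\nabla_v u(v'+t(v-v'))\cdot(v-v')\|_X\,dt$ by Bochner differentiation, and then the scalar Morrey proof via averaging on balls goes through verbatim with $|\cdot|$ replaced by $\|\cdot\|_X$.
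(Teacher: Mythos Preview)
Your proof is correct and follows essentially the same strategy as the paper: feed the two estimates from \eqref{serve} for $D_v\psi$ and $D_v^2\psi$ in $L^p(\R^d_v;H^s_p(\R^d_x))$ into a Sobolev-type embedding in the $v$-variable (enabled by $p>d$), then interpolate to extract the decay in $\lambda$.

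The packaging differs slightly. The paper avoids Banach-valued Sobolev theory by first using the Bessel-potential substitution $h_s=\mathcal{F}_x^{-1}[(1+|\cdot|^s)\mathcal{F}_x g]$ from the proof of Theorem~\ref{cos}, so that $\|D_v\psi(\cdot,v)\|_{H^s_p(\R^d)}^p=\int_{\R^d}|D_v G_\lambda h_s(x,v)|^p\,\dd x$; it then bounds $\sup_v\int_x(\cdots)$ by $\int_x\sup_v(\cdots)$, applies the \emph{scalar} embedding $W^{r,p}(\R^d_v)\hookrightarrow L^\infty$ for some $r\in(d/p,1)$ at each fixed $x$, and finishes with the real-interpolation inequality $\|f\|_{W^{r,p}}\le c\,\|f\|_{L^p}^{1-r}\|f\|_{W^{1,p}}^r$ together with H\"older in $x$. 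Your route via vector-valued Morrey plus the scaling optimisation is more direct and yields the sharp exponent $\lambda^{-(1-d/p)/2}$ (the paper gets $\lambda^{-(1-r)/2}$ for any $r>d/p$), at the price of invoking the $X$-valued fundamental theorem of calculus; since $H^s_p(\R^d)$ is reflexive this is indeed standard, as you note.
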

\begin{proof} Using the notation introduced in the previous proof, we have 
for any $v \in \R^d$, a.e.,
\begin{gather*}
\| D_v  {\psi} (\cdot, v) \|_{H^s_p (\R^{d})}^p 
 = \int_{\R^d} |D_v \, G_{\lambda} h_s (x,v)
 |^p \, \dd x \, .
\end{gather*}
By \eqref{ne1}, \eqref{ne11} and the Fubini theorem we know that 
$$
\int_{\R^d} \dd x \int_{\R^d} |D_v \, G_{\lambda} h_s (x,v)
 |^p \, \dd v +  \int_{\R^d} \dd x \int_{\R^d} |D_v^2 \, G_{\lambda} h_s (x,v)
 |^p \, \dd v < \infty \, .
$$
It follows that, for  $x \in \R^d$ a.e., 
$$
 \| D_v \, G_{\lambda} h_s (x, \cdot)\|^p_{W^{1,p}(\R^d)}=
\int_{\R^d} |D_v \, G_{\lambda} h_s (x,v)
 |^p \, \dd v + \int_{\R^d} |D_v^2 \, G_{\lambda} h_s (x,v)
 |^p \, \dd v < \infty \, .
$$
 In order to prove \eqref{nhtt} with $C(\lambda) \to 0$,
we  consider $r \in (0,1)$ such that $rp >d$.
 Let us  fix $x \in \R^d$, a.e.; 
by the previous estimate  the mapping $ v \mapsto D_v \, G_{\lambda} h_s (x,v)$ belongs to $W^{r,p}(\R^d) \subset W^{1,p}(\R^d)$. 

We can apply  the Sobolev embedding theorem (see page 203 in \cite{T}) and get that $ v \mapsto D_v \, G_{\lambda} h_s (x, v)$ in particular  is bounded and continuous   on $\R^d$. Moreover, 
\begin{equation}
\label{emb1}
 \sup_{v \in \R^d} |D_v \, G_{\lambda} h_s (x, v)|^p \le c \;
  \| D_v \, G_{\lambda} h_s (x, \cdot)\|^p_{W^{r,p}(\R^d)},
\end{equation} 
where $c = c(p,d, r)$. Integrating with respect to $x$ we get 
$$
\int_{\R^d} \big[ \sup_{v \in \R^d} |D_v \, G_{\lambda} h_s (x, v)|^p \big] \, \dd x \, \le c  \int_{\R^d}  
  \| D_v \, G_{\lambda} h_s (x, \cdot)\|^p_{W^{r,p}(\R^d)} \dd x \, .
$$
By \eqref{ber1} we know that $ \big( L^p (\R^d), W^{1,p} (\R^d) \big)_{r, p} =  W^{r, p} (\R^d)$. Applying Corollary 1.2.7 in \cite{L} we obtain that, for any $f  \in W^{1, p} (\R^d),$
$$
\| f\|_{W^{r, p} (\R^d)} \le c(r,p) \,  \big (\| f\|_{L^{p} (\R^d)} \big)^{1-r} \cdot 
 \big(\| f\|_{W^{1, p} (\R^d)} \big)^{r}. 
$$
 It follows that 
\begin{gather*}
\sup_{v \in \R^d} \| D_v  {\psi} (\cdot, v) \|_{H^s_p (\R^{d})}^p 
 = \sup_{v \in \R^d} \int_{\R^d} |D_v \, G_{\lambda} h_s (x,v)
 |^p \, \dd x 
\\
\le \int_{\R^d} \big[ \sup_{v \in \R^d} |D_v \, G_{\lambda} h_s (x,v)
 |^p \big] \, \dd x     \le c  \int_{\R^d}  
  \| D_v \, G_{\lambda} h_s (x, \cdot)\|^p_{W^{r,p}(\R^d)} \dd x
\\ \le 
c' \;  \int_{\R^d}  
  \| D_v \, G_{\lambda} h_s (x, \cdot)\|_{L^{p}(\R^d)}^{p(1-r)} \cdot   \| D_v \, G_{\lambda} h_s (x, \cdot)
\|^{pr}_{W^{1,p} (\R^d)} \dd x
\\
\le c'  \Big ( \int_{\R^d}  
  \| D_v \, G_{\lambda} h_s (x, \cdot)\|_{L^{p}(\R^d)}^{p} \dd x \Big)^{1-r} \cdot  \Big(\int_{\R^d} \| D_v \, G_{\lambda} h_s (x, \cdot)
\|^{p}_{W^{1,p} (\R^d)} \dd x \Big)^{r}. 
\end{gather*}
Now we easily obtain \eqref{nhtt} using \eqref{ne1} and \eqref{ne11}, since $$\int_{\R^d}  
  \| D_v \, G_{\lambda} h_s (x, \cdot)\|_{L^{p}(\R^d)}^{p} \dd x =
 \int_{\R^d} dv \int_{\R^d} |D_v \, G_{\lambda} h_s (x,v)
 |^p \, \dd x \le C(\lambda) \, \| g \|_{L^p (\R^d_v ;  H^{s}_ {p} 
(\R^d_x))}
$$
with $C(\lambda) \to 0$ as $\lambda \to \infty$.
\end{proof}

\smallskip
We complete the  study of  the regularity of solutions to  equation \eqref{uno} with the next result in which we strengthen the assumptions of Lemma \ref{neu}.
Note that the next assumption on $p$ holds when  $p > 6d$ as in Hypothesis \ref{hyp-holder}.

\begin{lemma}  \label{corsa} 
 Let $\lambda>0,$ $s \in (2/3, 1)$  and  $g \in L^p (\R^d_v ;  H^{s}_p (\R^{d}_x))$. In addition assume  that  
$p (s- \frac{1}{3}) >2d$, then the following statements hold.

(i) The  solution ${\psi} = G_{\lambda } g$ (see \eqref{gll1})
is bounded  and Lipschitz continuous on $\R^{2d}$.
Moreover there exists the classical derivative $D_x {\psi}$ 
which is continuous and bounded on $\R^{2d}$ and, for $\lambda>0,$ 
 \begin{equation} \label{lin}
\| \psi \|_\infty + \| D_x {\psi} \|_{\infty} \le C(\lambda) \| g \|_{L^p (\R^d_v ;  H^{s}_ {p} 
(\R^d_x))}, \;\;\;\;\;  \text{with $C(\lambda ) \to 0 $ as $\lambda \to \infty$. }
\end{equation} 
(ii)   $D_v  {\psi} \in W^{1,p}(\R^{2d}) $ (so in particular there exist weak partial derivatives $\partial_{x_i} \partial_{v_j} \psi \in L^p(\R^{2d}) $, $i,j =1, \ldots, d$) and 
\begin{equation}
\label{es4}
\| D_v  {\psi} \|_{ W^{1,p}(\R^{2d})} \le c(\lambda)  \| g \|_{L^p (\R^d_v ;  H^{s}_ {p} 
(\R^d_x))},\;\;\; \lambda>0,\;\; \text{with} \; \, c= c(\lambda) \to 0  \;\; \text{as $\lambda \to \infty$}.
\end{equation} 
\end{lemma}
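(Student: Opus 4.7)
The proof couples the Bessel-Sobolev regularity in $x$ from Theorem \ref{cos} and its proof with the $v$-regularity from \eqref{serve}, and uses Sobolev embeddings in both directions. The refined estimate $\psi \in L^p(\R^d_v; B^{s+2/3-\epsilon}_{p,p}(\R^d_x))$ from \eqref{dd2} provides the key $x$-regularity of $D_x\psi$, while the vector-valued Sobolev--Morrey embedding in $v$ (valid since $p > 6d > d$) upgrades $L^p$-bounds in $v$ to pointwise control. The hypothesis $p(s-\tfrac{1}{3}) > 2d$ is precisely what makes the Sobolev embedding thresholds in $x$ and $v$ consistent. I address (ii) first and use its conclusion in (i).

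\textbf{Part (ii).} Theorem \ref{cos} already gives $D_v\psi, D_v^2\psi \in L^p(\R^{2d})$; what remains is $\partial_{x_i}\partial_{v_j}\psi \in L^p(\R^{2d})$. Direct differentiation of the PDE in $v$ is unavailable, since $\partial_{v_j}g$ is only a distribution, so I work at the semigroup level. Differentiating \eqref{OU semigr},
\[
\partial_{v_j}P_t g(x,v) = t\,P_t\partial_{x_j}g(x,v) + P_t\partial_{v_j}g(x,v),
\]
and transferring $\partial_{v_j}$ onto the Gaussian density $p_t$ by integration by parts produces an integrable $t^{-1/2}$ singularity (the $v$-marginal of $Q_t$ scales as $t$). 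A further $\partial_{x_i}$ introduces a $t^{-3/2}$ factor (the $x$-marginal scales as $t^3$), yielding a doubly-singular Calder\'on--Zygmund operator. Controlling this via the techniques of \cite{Bo,BCLP} (in the spirit of the $D_v^2$-bound \eqref{cf5}), the $s$-Bessel regularity of $g$ in $x$, with $s > 2/3$, compensates the worst singularity, giving $\|\partial_{x_i}\partial_{v_j}\psi\|_{L^p(\R^{2d})} \le c(\lambda)\|g\|_{L^p(\R^d_v; H^s_p(\R^d_x))}$ with $c(\lambda)\to 0$.

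\textbf{Part (i) and main obstacle.} By \eqref{serve}, $\psi$ and $D_v\psi$ both lie in $L^p(\R^d_v; H^s_p(\R^d_x))$, so $\psi \in W^{1,p}(\R^d_v; H^s_p(\R^d_x))$; the vector-valued Sobolev--Morrey embedding in $v$ yields $\sup_v\|\psi(\cdot,v)\|_{H^s_p(\R^d_x)} \le C(\lambda)\|g\|_{L^p(\R^d_v; H^s_p(\R^d_x))}$, and since $sp > d$ (a consequence of $p(s-\tfrac{1}{3}) > 2d$), the embedding $H^s_p(\R^d_x) \hookrightarrow C_b(\R^d_x)$ gives $\|\psi\|_\infty \le C(\lambda)\|g\|_{L^p(\R^d_v; H^s_p(\R^d_x))}$. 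For $\|D_x\psi\|_\infty$ I combine $D_x\psi \in L^p(\R^{2d})$ from Theorem \ref{cos}, $D_v D_x\psi \in L^p(\R^{2d})$ from part (ii), and $D_x\psi \in L^p(\R^d_v; W^{s-1/3-\epsilon, p}(\R^d_x))$ from the interpolation leading to \eqref{dd2}; an anisotropic Sobolev--Morrey embedding then gives $D_x\psi \in C_b(\R^{2d})$ provided $p > d + d/(s-\tfrac{1}{3}-\epsilon)$, which follows from $p(s-\tfrac{1}{3}) > 2d$ together with $s < 1$ by a short scaling computation. Continuity of $\psi$ and $D_x\psi$ follows from the semigroup representation. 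The main obstacle is precisely the kernel-level estimate in part (ii): the delicate trade-off between the $t^{-2}$ singularity of the doubly-differentiated Gaussian kernel and the $s$-Bessel regularity of $g$ in $x$ is exactly why the assumption $s > 2/3$, stronger than Theorem \ref{cos}'s $s > 1/3$, becomes necessary.
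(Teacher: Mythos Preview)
Your part (ii) contains the main gap. You correctly identify that $\partial_{x_i}\partial_{v_j}\psi \in L^p(\R^{2d})$ is the crux, and your heuristic scaling (a $t^{-2}$ kernel singularity compensated by the $s>2/3$ regularity of $g$ in $x$) is the right intuition. But invoking ``the techniques of \cite{Bo,BCLP}'' is not a proof: those references establish the $D_v^2$-bound \eqref{cf5} and the $H^{2/3}_p$ gain \eqref{frrr}, not the mixed $D_x D_v$ estimate you need. Carrying out a Calder\'on--Zygmund argument for $\partial_{x_i}\partial_{v_j} G_\lambda J_x^{-s}$ would require explicit kernel computations and cancellation estimates that you do not supply; this is precisely the step you label the ``main obstacle'', and it remains open in your write-up.

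The paper bypasses this entirely via interpolation. The key observation is that $G_\lambda$ commutes with $\partial_{x_k}$, so if $g\in L^p(\R^d_v;W^{1,p}(\R^d_x))$ then $\partial_{x_k}\psi = G_\lambda(\partial_{x_k}g)$, and \eqref{prima} gives $D_v^2 D_x\psi\in L^p(\R^{2d})$. Thus
\[
D_x G_\lambda:\ L^p\big(\R^d_v;H^1_p(\R^d_x)\big)\ \longrightarrow\ L^p\big(\R^d_x;H^2_p(\R^d_v)\big),
\]
while Theorem \ref{cos} already gives, for any $s'\in(1/3,1)$,
\[
D_x G_\lambda:\ L^p\big(\R^d_v;H^{s'}_p(\R^d_x)\big)\ \longrightarrow\ L^p(\R^{2d}).
\]
Interpolating at level $s=(1-\theta)s'+\theta$ yields $D_x G_\lambda: L^p(\R^d_v;W^{s,p}(\R^d_x))\to L^p(\R^d_x;B^{2\theta}_{p,p}(\R^d_v))$. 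Choosing $s'<2s-1$ (possible exactly because $s>2/3$) forces $\theta>1/2$, hence $2\theta>1$ and $\partial_{v_j}D_x\psi\in L^p(\R^{2d})$; this is \eqref{ric0}--\eqref{f55}, and it gives (ii) in a few lines with no kernel analysis.

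For part (i), your anisotropic Sobolev--Morrey route is plausible (the threshold $p>d+d/(s-\tfrac13-\epsilon)$ does follow from $p(s-\tfrac13)>2d$ and $s<1$), but it depends on having (ii) first. The paper instead proves (i) directly and independently: it shows $D_x\psi\in W^{\eta,p}(\R^{2d})$ for some $\eta\in(0,s-\tfrac13)$ with $\eta p>2d$, using the Strichartz--Fubini theorem \eqref{d11} to reduce to separate bounds $D_x\psi\in L^p(\R^d_v;W^{\eta,p}(\R^d_x))$ (from \eqref{ne1ee}) and $D_x\psi\in L^p(\R^d_x;W^{\eta,p}(\R^d_v))$ (again from the interpolation \eqref{ric0}, since $\eta<2/3<2\theta$). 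The ordinary isotropic embedding $W^{\eta,p}(\R^{2d})\hookrightarrow C_b$ then finishes. So in the paper, both (i) and (ii) flow from the single interpolation step \eqref{ric0}; your route would require the unproved kernel estimate first and then an additional nonstandard anisotropic embedding on top.
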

\begin{proof} (i) The 
 boundedness of $\psi $ follows easily from  estimates \eqref{prima} and \eqref{prima1}  using the Sobolev embedding since in our case $p > 2d$. Let us concentrate on proving the Lipschitz continuity.

First we recall a Fubini type theorem for fractional Sobolev spaces   (see \cite{Str}):
\begin{equation}
\label{d11}
W^{\gamma, p } (\R^{2d})
 = \Big \{  f \in L^p (\R^{2d}):  \int_{\R^d} \| f( x, \cdot) 
\|_{W^{\gamma,p} (\R^d)) }^p \dd x 
 + \int_{\R^d} \| f(  \cdot,v) \|_{W^{\gamma,p} (\R^d)) }^p \dd v 
 < \infty \Big\},
\end{equation}
 $\gamma \in (0,1]$ 
(with equivalence of the respective norms). 
 Let $\eta \in (0, s+ 2/3 -1)$ be such that 
\begin{equation}
\label{et1}
\eta p >2d \, .
\end{equation} 
We will prove that $ D_x {\psi} \in {W^{\eta,p} (\R^{2d})} $ so that by 
 the Sobolev embedding ${W^{\eta,p} (\R^{2d})} \subset C^{\eta - 2d/p}_b (\R^{2d})$ (see  page 203 in \cite{T}) we  get the assertion.
According to \eqref{d11} we check that
\begin{equation} \label{p9}
 \int_{\R^d} \| D_x {\psi}( \cdot,v) \|_{W^{\eta,p} (\R^d)) }^p \dd v 
 \le  C(\lambda) \| g \|_{L^p (\R^d_v ;  H^{s}_ {p} (\R^d_x))}^p \, ,
\end{equation}
and 
\begin{equation} \label{p10}
\int_{\R^d} \| D_x {\psi}( x, \cdot) 
\|_{W^{\eta,p} (\R^d)) }^p \dd x 
 \le  C(\lambda) \| g \|_{L^p (\R^d_v ;  H^{s}_ {p} (\R^d_x))}^p \, .
\end{equation}
with $C(\lambda) \to 0$ as $\lambda \to \infty$. 
Estimate  \eqref{p9} follows  by \eqref{ne1ee} which gives   $\psi \in L^p (\R^d_v ;  B^{\eta +1}_ {p,  p} (\R^d_x)) $ with $\eta = s- \epsilon - 1/3$.

Let us concentrate on \eqref{p10}. We still use the interpolation theory results of Section 3.2 but  here in addition to 
 \eqref{h2} we also need to identify  
$
L^p (\R^d ; H^{s}_p (\R^d))
$
  with   the Banach space $L^p (\R^d_x ; H^{s}_p (\R^d_v))$ of all measurable functions $f(x,v)$,
$f: \R^d \times \R^d \to \R$ such that $f(x, \cdot) \in H^{s}_p (\R^d)$, for  $x \in \R^d$ a.e., $ 0 \le s \le 2,$ and, moreover 
 $
\int_{\R^d} \|f(x,\cdot) \|_{H^{s}_p}^p \, \dd x < \infty$. 
 As a norm one  considers 
\begin{equation} \label{dett}
 \| f\|_{L^p (\R^d_x ; H^{s}_p (\R^d_v))}  = \Big( \int_{\R^d} \|f(x,\cdot) \|_{H^{s}_p(\R^d)}^p \dd x \Big)^{1/p}.
\end{equation} 
Similarly, we identify  $L^p (\R^d ; B^{s}_{p,p} (\R^d))$ 
with the Banach space $L^p (\R^d_x ; B^{s}_{p,p} (\R^d_v))$ of all measurable functions 
$f: \R^d \times \R^d \to \R$ such that $f(x, \cdot) \in B^{s}_{p,p} (\R^d)$, for  $x $ a.e.,  and 
 $ \int_{\R^d} \|f(x,\cdot) \|_{B^{s}_{p,p} (\R^d)}^p \, \dd x < \infty$. 
 By \eqref{prima} and \eqref{prima1} in  Theorem \ref{cos} and  using 
 \eqref{ct5} we find with $\psi = G_{\lambda } g$ 
\begin{align} \label{pr34}
\nonumber
\int_{\R^d} \dd x \int_{\R^d} | D_x  \psi(x,v) |^p \, \dd v  =
 \int_{\R^d} \| D_x  \psi(x, \cdot) \|_{L^p(\R^d)}^p \dd x \le C(\lambda)
  \| g \|_{L^p (\R^d_v ;  H^{s}_ {p} (\R^d_x))}^p,
\\
\int_{\R^d} \dd x \int_{\R^d} | D_v^2 (D_x  \psi)(x,v) |^p \, \dd v  =
\int_{\R^d} \| D^2_v (D_x \psi) (x, \cdot) \|_{L^{p}(\R^d)}^p \dd x \le c
  \| g \|_{L^p (\R^d_v ;  H_{p}^{1} (\R^d_x))}^p,
\end{align} 
with $c= c(d,p) >0$. Thus  we can consider the following linear maps 
($s' \in  (1/3,1)$ will be fixed below)
\begin{gather} \label{ins}
D_x G_{\lambda}: L^p \big(\R^d_v ;  H^{s'}_ {p} (\R^d_x)\big) \to  L^p(\R^{2d})= 
L^p\big(\R^d_x; L^p(\R^d_v)\big), 
\\ \nonumber
D_x G_{\lambda}: L^p \big(\R^d_v ;  H^{1}_ {p} (\R^d_x)\big) \to   
L^p\big(\R^d_x; H^{2}_{p}(\R^d_v)\big). 
\end{gather}
Interpolating,  choosing $s' \in (1/3,1)$ such that 
$$
s' < 2s -1,
$$
we get  (see \eqref{ber1} and \eqref{in2} with $\theta = \frac{s-s'}{1-s'} >1/2$)
\begin{gather} \label{ric0}
D_x G_{\lambda} :  L^p \big(\R^d_v ; W^{s, p} (\R^d_x)\big) =
\Big( L^p \big(\R^d_v ;  H^{s'}_{p} (\R^d_x)\big) \, , \, L^p \big(\R^d_v ;   H^{1}_p (\R^d_x)\big)
\Big)_{\theta, p}\\ \nonumber    \longrightarrow   \Big( L^p \big(\R^d_x ;  L^{  p} (\R^d_v) \big) \, , \, L^p \big(\R^d_x ;  H^2_p(\R^d_v) \big) \Big)_{\theta,p} = L^p \big(\R^d_x ; B^{2 \theta}_{p,p} (\R^d_v)\big)
\end{gather}
and  by  the  estimates in \eqref{pr34} we find
$$
 \int_{\R^d} \| D_x  (G_{\lambda} g)(x, \cdot) \|_{B^{2 \theta}_{p,p} (\R^d)}^p \dd x \le C'(\lambda)
  \| g \|_{L^p (\R^d_v ;  H^{s}_ {p} (\R^d_x))}^p
$$
(recall that  $H^{s }_ {p} (\R^d) \subset W^{s, p} (\R^d)$).
 Since $\eta < 2/3$ we have $ B^{2\theta}_{p,p} (\R^d) \subset W^{\eta, p} (\R^d)   $ (cf. \eqref{incl}) and    we finally get \eqref{p10}.

\smallskip 
\noindent (ii)  We fix $j = 1, \ldots, d$
 and prove the assertion  with $D_v \psi$ replaced by $\partial_{v_j} \psi$.

By Theorem  \ref{cos} we already know that there exists $D_v \partial_{v_j} \psi \in L^p (\R^{2d})$. 
  Therefore to show the 
assertion it is enough to check that there exists the weak derivative 
\begin{equation}
\label{es5}
 D_x (\partial_{v_j} \psi) = \partial_{v_j} (D_x {\psi}) \in L^p(\R^{2d}).
\end{equation} 
 We use again \eqref{ric0} with the same $\theta$.  
Since $2 \theta >1$ we know in particular that $D_x G_{\lambda} g \in  L^p (\R^d_x ; W^{1, p} (\R^d_v))$. Thus we  have that there exists 
the weak derivative $\partial_{v_j} D_x {\psi} (x, \cdot) $, for $x$ a.e., and  
\begin{equation} \label{f55}
 \int_{\R^d} \dd x \int_{\R^d} |\partial_{v_j} D_x {\psi} (x, v)|^p \, \dd v
=\int_{\R^d} \| \partial_{v_j} D_x {\psi} (x, \cdot) \|_{L^p (\R^{d})}^p \dd x \le C(\lambda) \,  \| g \|_{L^p (\R^d_v ;  H^{s}_ {p} (\R^d_x))}^p.
\end{equation}
This finishes the proof. 
 \end{proof}

\smallskip 
Now we study the complete  equation
\begin{equation} \label{uno1}
\lambda {\psi} (z) - \frac{1}{2} \triangle_v {\psi}(z) - v \cdot D_x {\psi} (z) 
 - F(z) \cdot D_v {\psi}(z) = g(z), \;\;\; z = (x,v) \in \R^{2d},  
\end{equation}
assuming that $F \in L^p (\R^d_v ; H^{s}_p (\R^d_x))$ (cf. \eqref{h2} and \eqref{chie}).  
From the previous results we obtain (see also Definition \ref{deff})

\begin{theorem} \label{PDE-1!} Let  $s \in (2/3, 1)$
 and $p $ be such that 
 $p (s- \frac{1}{3}) >2d$.  
Assume that   
$$
g, \, F \in  L^p (\R^d_v ;  H^{s}_p (\R^{d}_x)).
$$
Then there exists $\lambda_0 = \lambda_0 ( s,p,d, \|F\|_{L^p (\R^d_v ;  H^{s}_p (\R^d_x)}) >0$ such that for any 
$\lambda>\lambda_0$ there exists a unique  solution ${\psi}
=  \psi_{\lambda} \in X_{p,s}$ 
to  \eqref{uno1} and moreover  
\begin{gather} \label{serve2}
 \lambda \| {\psi}\|_{L^p (\R^d_v ;  H^{s}_p (\R^{d}_x))}+ 
  \sqrt{\lambda} \| D_v {\psi}\|_{L^p (\R^d_v ;  H^{s}_p (\R^{d}_x))}
+ \| D^2_v {\psi} \|_{L^p (\R^d_v ;  H^{s}_p (\R^{d}_x))} 
\\ + \| v \cdot D_x {\psi} \|_{L^p (\R^d_v ;  H^{s}_p (\R^{d}_x))}
 \nonumber \le C \| g \|_{L^p (\R^d_v ;  H^{s}_p (\R^{d}_x) ) }
\end{gather}  
with ${C = C(s,p,d, \|F\|_{L^p (\R^d_v ;  H^{s}_p (\R^{d}_x)}) >0}$.
 We also have
\begin{equation}
\label{nht}
 \sup_{v \in \R^d}   \|D_v  \psi (\cdot, v) \|_{ H^{s}_ {p} 
(\R^d)} \le C(\lambda) \| g\|_{ L^p (\R^d_v ; H^{s}_p (\R^d_x))}, \;\;
 \text{with $C(\lambda) \to 0 $ as $\lambda \to \infty$}.
\end{equation} 
 Moreover,  ${\psi} \in C^1_b(\R^{2d})$, i.e.,  $\psi $ is bounded on $\R^{2d}$ and 
 there exist the classical derivatives $D_x {\psi}$ and $D_{v} \psi$
which are bounded and continuous  on $\R^{2d}$; we  also have   
 with  $C(\lambda ) \to 0 $ as $\lambda \to \infty$
 \begin{equation} \label{lin2}
\| \psi\|_\infty + \| D_x {\psi} \|_{L^p (\R^{2d})} + \| D_x {\psi} \|_\infty+ \| D_v {\psi} \|_\infty \le C(\lambda) \| g \|_{L^p (\R^d_v ;  H^{s}_ {p} 
(\R^d_x))} . 
\end{equation} 
Finally,  $D_v  {\psi} \in W^{1,p}(\R^{2d}) $ and
\begin{equation}
\label{es41}
\| D_v  {\psi} \|_{ W^{1,p}(\R^{2d})} \le c(\lambda)  \| g \|_{L^p (\R^d_v ;  H^{s}_ {p} 
(\R^d_x))}, \;\;\; \text{  $c= c(\lambda) \to 0$
 as $\lambda \to \infty$.}
\end{equation} 
\end{theorem}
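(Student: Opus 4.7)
My plan is to treat \eqref{uno1} as a perturbation of the linear equation \eqref{uno} and solve it by a Banach fixed-point argument based on the resolvent $G_\lambda$ constructed in Theorem \ref{cos}. Rewriting \eqref{uno1} as $(\lambda - \mathcal{L})\psi = g + F\cdot D_v\psi$ and inverting formally produces the fixed-point equation
\begin{equation*}
\psi = T_\lambda\psi := G_\lambda g + G_\lambda\bigl(F\cdot D_v\psi\bigr).
\end{equation*}
The goal is to find a Banach space $Y$ on which $T_\lambda$ is a contraction for large $\lambda$, and then to read off all the announced estimates for $\psi$ by applying Theorem \ref{cos} and Lemmas \ref{neu}, \ref{corsa} to the effective source $\tilde g := g + F\cdot D_v\psi \in L^p(\R^d_v; H^s_p(\R^d_x))$.

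I would take
\begin{equation*}
Y = \bigl\{\psi \in L^p(\R^d_v; H^s_p(\R^d_x)) : D_v\psi \in L^\infty(\R^d_v; H^s_p(\R^d_x))\bigr\},
\end{equation*}
normed by $\|\psi\|_Y = \|\psi\|_{L^p_v H^s_{p,x}} + \sup_v \|D_v\psi(\cdot,v)\|_{H^s_p(\R^d_x)}$. Two ingredients must combine. First, the pointwise (in $v$) multiplicative estimate in $H^s_p(\R^d_x)$, valid because $sp>d$ under our assumption $p(s-1/3)>2d$ (so that $H^s_p(\R^d)$ is a Banach algebra), gives
\begin{equation*}
\|F\cdot D_v\psi\|_{L^p_v H^s_{p,x}} \le c\,\|F\|_{L^p_v H^s_{p,x}}\,\sup_v\|D_v\psi(\cdot,v)\|_{H^s_p(\R^d_x)}.
\end{equation*}
Second, Lemma \ref{neu} combined with estimate \eqref{serve} of Theorem \ref{cos} provides $\|G_\lambda h\|_Y \le C(\lambda)\|h\|_{L^p_v H^s_{p,x}}$ with $C(\lambda)\to 0$ as $\lambda\to\infty$. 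Composing the two shows that $T_\lambda$ is a Lipschitz self-map of $Y$ with constant $c\,C(\lambda)\|F\|_{L^p_v H^s_{p,x}}$, which is below $1$ once $\lambda > \lambda_0$ for some $\lambda_0$ depending only on $s,p,d,\|F\|_{L^p_v H^s_{p,x}}$; the Banach fixed-point theorem then yields a unique $\psi\in Y$ with $\psi = T_\lambda\psi$, i.e.\ a solution of \eqref{uno1}.

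Once $\psi\in Y$ is known, the identity $\lambda\psi - \mathcal{L}\psi = \tilde g$ with $\tilde g \in L^p(\R^d_v; H^s_p(\R^d_x))$ allows Theorem \ref{cos} to place $\psi$ in $X_{p,s}$ and to supply \eqref{serve2}, Lemma \ref{neu} to give \eqref{nht}, and Lemma \ref{corsa} to deliver the $C^1_b$-regularity and the $W^{1,p}$-control \eqref{lin2}, \eqref{es41}. The extra contribution $\|F\cdot D_v\psi\|$ appearing on the right-hand sides is absorbed back into the left-hand sides by enlarging $\lambda_0$ slightly, using the algebra inequality once more; this converts every bound into one depending only on $\|g\|_{L^p_v H^s_{p,x}}$. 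For uniqueness in $X_{p,s}$, if $\psi_1,\psi_2$ both solve \eqref{uno1} then $w = \psi_1-\psi_2 \in X_{p,s}$ satisfies $w = G_\lambda(F\cdot D_v w)$; Lemma \ref{neu} applied to this identity places $w$ in $Y$, and the contraction estimate forces $w\equiv 0$. The main obstacle is closing the algebra estimate and the sup-in-$v$ estimate simultaneously: one needs the non-standard pointwise-in-$v$ control of $D_v\psi$ in $H^s_p$ supplied by Lemma \ref{neu} (which is precisely what that lemma was designed for) together with the Banach-algebra property of $H^s_p(\R^d)$, and these two requirements, combined with the decay $C(\lambda)\to 0$, are exactly what dictate the hypotheses $s>2/3$ and $p(s-1/3)>2d$.
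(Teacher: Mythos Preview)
Your proof is correct and follows essentially the same strategy as the paper's: a contraction argument built on $G_\lambda$, Lemma \ref{neu}, and the pointwise-in-$v$ algebra property of $H^s_p(\R^d_x)$ (which the paper invokes implicitly when it writes $\|T_\lambda f(\cdot,v)\|_{H^s_p}\le \|D_v G_\lambda f(\cdot,v)\|_{H^s_p}\|F(\cdot,v)\|_{H^s_p}$). The only difference is the choice of fixed-point variable: the paper contracts on the effective source $f$ in the simpler space $L^p(\R^d_v;H^s_p(\R^d_x))$ via $T_\lambda f:=F\cdot D_v(G_\lambda f)$ and then sets $\psi=G_\lambda(\mathbb{I}-T_\lambda)^{-1}g$, whereas you contract directly on $\psi$ in the augmented space carrying the $\sup_v\|D_v\psi(\cdot,v)\|_{H^s_p}$ norm. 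These are equivalent through the substitution $\psi=G_\lambda f$; the paper's version is marginally cleaner because the fixed-point space is standard and its completeness needs no comment, while your version makes the role of the sup-in-$v$ control more visible. In the uniqueness step, note that the membership $F\cdot D_v w\in L^p_v H^s_{p,x}$ that you need before invoking Lemma \ref{neu} comes for free from $w\in X_{p,s}$ via the left-hand side $\lambda w-\tfrac12\Delta_v w - v\cdot D_x w$ of the equation, not from the algebra inequality; the paper exploits this by running the contraction on $f$ rather than $w$.
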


\begin{proof} 
First note that, since $p>2d$, the boundedness of $\psi$ follows by  
 the Sobolev embedding (recall also \eqref{xpss}). Similarly  the second estimate in \eqref{lin2} follows from \eqref{es41}.

 We consider the Banach space $Y = L^p (\R^d_v ;  H^{s}_p (\R^{d}_x))$  and  use an argument  similar to  the one used in the proof of Proposition 5 in \cite{DFPR}. 
 Introduce the operator $T_{\lambda} : Y \to Y$,
$$ 
T_\lambda f := F \cdot  D_v (G_{\lambda} f), \;\; f \in Y,
$$
where $G_{\lambda}$ is defined  in \eqref{gll}.  It is not difficult to check  that $T_{\lambda} f \in Y$ for $f \in Y$. Indeed  
by Lemma \ref{neu} we get
\begin{align*}
 \int_{\R^d} \|  T_\lambda f (\cdot, v) \|_{H^{s}_p (\R^{d})}^p \dd v   
&\le \sup_{v \in \R^d}\| D_v (G_{\lambda} f)(\cdot, v)\|_{H^{s}_p (\R^{d})}^p \int_{\R^d} \|  F  (\cdot, v) \|_{H^{s}_p (\R^{d})}^p \dd v \\
&\le C(\lambda) 
\| f \|_{L^p (\R^d_v ;  H^{s}_ {p} 
(\R^d_x))}^p \| F \|_{L^p (\R^d_v ;  H^{s}_ {p} 
(\R^d_x))}^p. 
\end{align*}
 It is clear that $T_{\lambda}  $ is linear and bounded. 
 Moreover   we find easily that there exists $\lambda_0 >0$ such that for any $ \lambda > \lambda_0$ we have that 
the operator norm of $T_{\lambda}$ is less than $1/2$.

Let us fix $\lambda > \lambda_0$. Since $T_{\lambda}$ is a strict contraction, there exists a unique solution $f \in Y$ to 
\begin{equation} \label{fix}
 f - T_{\lambda} f =g.
\end{equation}   
We write $f = (\mathbb{I} - T_{\lambda})^{-1}g \in Y$.

\noindent \textit{Uniqueness}. Let ${\psi}_1$ and $\psi_2$ be solutions in 
$X_{p,s}$.
 Set $w = \psi_1 - \psi_2$.  We know that 
$$
 \lambda w (z) - \frac{1}{2} \triangle_v w(z) - v \cdot D_x w (z) 
 - F(z) \cdot D_v w(z) = 0.
$$
We have $  \lambda w  - \frac{1}{2} \triangle_v w - v \cdot D_x w  = f \in Y$. By  uniqueness (see Theorem \ref{cos}) we get that 
$w = G_{\lambda} f $. Hence, for $z$ a.e.,
$$
0=  f (z) - F(z) \cdot D_v w(z)  =  f(z)  - F(z) \cdot D_v G_{\lambda} f(z).  
$$ 
Since $T_{\lambda}$ is a strict contraction we obtain that $f=0$ 
and so $\psi_1 = \psi_2$.

\noindent \textit{Existence}. It is not difficult to prove that 
\begin{equation} \label{form1}
 {\psi} = {\psi}_{\lambda}= G_{\lambda} (\mathbb{I} -T_\lambda)^{-1} g,
\end{equation} 
is the unique solution to \eqref{uno1}. 

\smallskip
\noindent \textit{Regularity of $\psi$ and estimates}. 
 All the  assertions  follow
 easily from \eqref{form1} since $( \mathbb{I} -T_\lambda)^{-1} g \in Y$ and we can 
 apply Theorem \ref{cos}, Lemmas \ref{neu} and \ref{corsa}. 
\end{proof}
\medskip

 In the Appendix we will also present a result on the stability of the PDE \eqref{uno1}, see Lemma \ref{lemma: stability PDE} .

\section{Regularity of the characteristics}\label{sec:SDE}

We will prove existence of a stochastic flow for the SDE \eqref{eq-SDE} assuming 
Hypothesis \ref{hyp-holder}.

\smallskip

We can rewrite our SDE as follows. Set $Z_t =(X_t ,V_t ) \in \R^{2d}$, $z_0=(x_0,v_0)$ and introduce the functions $b(x,v) =  A\cdot z + B(z):\R^{2d}\to \R^{2d}$, where 
\begin{equation}   \label{ABQ}
 A = \begin{pmatrix}    	0 & \mathbb{I} \\	0 & 0 
       \end{pmatrix} \ , \qquad
{R \,} = \begin{pmatrix}   0\\ \mathbb{I} \end{pmatrix},
\;\;\; \; {R \,} {R \,}^* = Q = \begin{pmatrix}  0 & 0\\ 0 & \mathbb{I}  \end{pmatrix} , \;\;\; 
B = R F = \begin{pmatrix}    0  \\ F    \end{pmatrix}: \R^{2d} \to \R^{2d} \, .
\end{equation}
With this new notation, \eqref{eq-SDE} can be rewritten as
\begin{equation} \label{eq Z}
\left\{ \begin{array}{cc}
\dd Z_t = b(Z_t) \, \dd t +R \cdot \dd W_t \\
Z_0 = z_0 \hspace{25mm} \end{array} \right. \, 
\end{equation}
or
\begin{equation}   \label{eq Z2}
\left\{ \begin{array}{cc}
\dd Z_t = \big(A\cdot Z_t  + B(Z_t) \big) \dd t +R \cdot \dd W_t \\
Z_0 = z_0 \hspace{43mm} \end{array} \right. \, .
\end{equation}
We have 
\begin{align}  
\nonumber
X_t &= x_0 + \int_0^t V_s \, \dd s = x_0 + t v_0 
+ \int_0^t (t-s) F(X_s, V_s) \, \dd s  + \int_0^t  W_s \, \dd s \,  , \label{Xt} \\ 
V_t &= v_0 + \int_0^t F(X_s, V_s) \, \dd s + W_t \, .
\end{align}

\subsection{Strong well posedness}

To prove strong  well posedness for \eqref{eq Z} we will also use 
 solutions $U$ with values in $\R^{2d}$  of 
\begin{align} \label{PDE-lambda}
& \lambda U (z)
  - \frac{1}{2} \mathrm{Tr} \big(Q D^2 U (z) \big) -   \langle Az, DU(z)\rangle  
 -  \langle B(z), DU(z)\rangle  = B(z),    \nonumber \\ &
\text{i.e.,} \;\;  \lambda U (z) - {\mathcal L} U(z) =   B(z)  
\end{align}
 (defined componentwise at least  for $\lambda$  large enough).   
Note that $U  = \begin{pmatrix}
    0 \\
   \tilde u
    \end{pmatrix}$ 
where  
$$
 \lambda \tilde u (z) - {\mathcal L} \tilde u(z) =   F(z) \, 
$$
is again defined componentwise ($\tilde u : \R^{2d} \to \R^d$).

\begin{remark}\label{rem hypoellipticity} 
In the following, according to \eqref{ABQ}, we will say that the singular diffusion $Z_t$ (the noise acts only on the last $d$ coordinates $\{e_{d+1}, \dots , e_{2d} \}$) or the 
associated Kolmogorov operator  
\begin{equation*}
\calL f(z) = \frac{1}{2} \triangle_v f(z) + \langle b(z) , D f (z)\rangle, 
\end{equation*}
$b(z) = Az +B(z)$,  are hypoelliptic to refer to the fact that the vectors
\begin{equation*}
\big\{ e_{d+1}, \dots , e_{2d}, A e_{d+1}, \dots , A e_{2d} \big\} 
\end{equation*}
generate $\R^{2d}$. Equivalently using $Q$ given in \eqref{ABQ} and the adjoint matrix $A^*$ 
we have that the symmetric matrix  $Q_t = \int_0^t 
e^{sA} Q e^{sA^*} \dd s$  is positive definite for any $t >0$ (cf. \eqref{dia1}). Note that
$$
\mathrm{det} (Q_t ) = c\, t^{4d},\;\;
t>0 \, .
$$
\end{remark}

We collect here some preliminary results, which we will later need. Recall the OU process  
\begin{equation} \label{ou1}
\left\{ \begin{array}{cc}
\dd L_t =  A L_t \, \dd t + {R \,}  \dd W_t \\
L_0 = z \in \R^{2d} \hspace{11mm} \end{array} \right. ,
 \quad\; \text{i.e.,} \ \;\;
 L_t = L_t^z =  e^{tA}z + \int_0^t e^{(t-s) A}  R \, \dd W_s \, .
\end{equation}
Using the fact that $L_t$ is hypoelliptic, for any $t>0$, one gets that the law of $L_t$ is 
equivalent to the Lebesgue measure in $\R^{2d}$ (see for example the proof of the next lemma). We also have the following result.

 \begin{lemma}\label{lem OU-Lp}
Let $(L_t^z)$  be the OU process solution of \eqref{ou1}. Let $f:\R^{2d} \to \R$ belong to
$L^q(\R^{2d})$ for  $q> 2d$. Then there exists a constant $C$ depending on $q,d$ and
$T$ such that
\begin{equation}
 \sup_{z\in\R^{2d}} \E \Big[  \int_0^T f(L_s^z) \, \dd s  \Big] \le C \| f\|_{L^q(\R^{2d})} \, . 
\end{equation}
 \end{lemma}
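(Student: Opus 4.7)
\textbf{Proof plan for Lemma \ref{lem OU-Lp}.} The plan is to exploit the explicit Gaussian structure of $L_s^z$ together with Hölder's inequality, reducing the claim to a pointwise-in-$s$ estimate on the $L^{q'}$-norm of the OU transition density, where $q' = q/(q-1)$.

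First I would observe that $L_s^z$ is Gaussian with mean $e^{sA}z$ and covariance $Q_s$ (see \eqref{ou1}, \eqref{dia1}), so its density with respect to Lebesgue measure on $\R^{2d}$ is
\begin{equation*}
p_s^z(y) = \frac{1}{(2\pi)^{d}\sqrt{\det Q_s}}\exp\!\Bigl(-\tfrac{1}{2}\langle Q_s^{-1}(y-e^{sA}z),\,y-e^{sA}z\rangle\Bigr),
\end{equation*}
which is smooth for every $s>0$ by the hypoellipticity recalled in Remark \ref{rem hypoellipticity}. By Fubini and Hölder's inequality in the $y$-variable,
\begin{equation*}
\E\Bigl[\int_0^T f(L_s^z)\,\dd s\Bigr] = \int_0^T \!\!\int_{\R^{2d}} f(y)\, p_s^z(y)\,\dd y\,\dd s \le \|f\|_{L^q(\R^{2d})} \int_0^T \|p_s^z\|_{L^{q'}(\R^{2d})}\,\dd s .
\end{equation*}

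Next I would compute $\|p_s^z\|_{L^{q'}}$. Since this norm is translation invariant in $y$, it is independent of $z$, and a direct Gaussian integral (using $\int_{\R^{2d}}\exp(-\tfrac{q'}{2}\langle Q_s^{-1}y,y\rangle)\,\dd y = (2\pi)^d(q')^{-d}\sqrt{\det Q_s}$) yields
\begin{equation*}
\|p_s^z\|_{L^{q'}(\R^{2d})} = C_{d,q}\,(\det Q_s)^{-\tfrac{1}{2q}}.
\end{equation*}
Inserting the determinant formula $\det Q_s = c_d\, s^{4d}$ from \eqref{dia1} gives the key bound $\|p_s^z\|_{L^{q'}} \le C_{d,q}\, s^{-2d/q}$, uniformly in $z$.

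Finally, the time integral $\int_0^T s^{-2d/q}\,\dd s$ is finite precisely when $2d/q < 1$, i.e.\ when $q > 2d$, which is exactly the hypothesis of the lemma. Combining the estimates yields
\begin{equation*}
\sup_{z\in\R^{2d}}\E\Bigl[\int_0^T f(L_s^z)\,\dd s\Bigr] \le C_{d,q}\,\|f\|_{L^q(\R^{2d})}\int_0^T s^{-2d/q}\,\dd s \le C(q,d,T)\,\|f\|_{L^q(\R^{2d})},
\end{equation*}
as required. The only non-routine point is the sharp exponent $-2d/q$ in the density estimate: the degeneracy of the noise enters through $\det Q_s \sim s^{4d}$ (rather than the $s^{2d}$ one would obtain for a non-degenerate diffusion in $\R^{2d}$), and it is precisely this hypoelliptic scaling that forces the threshold $q>2d$ instead of the more familiar $q>d$.
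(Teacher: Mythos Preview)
Your proof is correct and follows essentially the same route as the paper: both apply H\"older's inequality to the Gaussian transition kernel, reduce to $(\det Q_s)^{-1/(2q)}$, and use $\det Q_s \sim s^{4d}$ to obtain the integrability condition $q>2d$. The only cosmetic difference is that the paper changes variables to a standard Gaussian before applying H\"older, whereas you compute the $L^{q'}$-norm of the density directly; the resulting estimate is identical.
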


 \begin{proof}
We need to compute
$$  \E \Big[  \int_0^T f(L_s^z) \, \dd s  \Big] = \int_0^T P_s f(z) \, \dd s \, , $$
where $P_t$ is the Ornstein-Uhlenbeck semigroup introduced in \eqref{OU semigr}. 
By changing variable and using the H\"older inequality we find,
for $t \in [0,T] $, $z \in \R^{2d}$,
\begin{align*}
|P_t f(z)| &=  \Big| c_d \int_{\R^{2d}} f(e^{tA}z + \sqrt{Q_t} \, y) e^{- 
  \frac{|y|^2}{2}   }   \, \dd y \Big| 
  \le c_q  \Big (  \int_{\R^{2d}} |f(e^{tA}z + \sqrt{Q_t} \, y)|^q \,  \dd y \Big)^{1/q} \\
&= \frac{c_q}{ (\text{det}(Q_t))^{1/2q} }
  \Big (  \int_{\R^{2d}} |f(e^{tA}z + w)|^q \, \dd w \Big)^{1/q} 
  = \frac{c_q}{ (\text{det}(Q_t))^{1/2q} } \| f\|_{L^q(\R^{2d})} \, .
\end{align*}
with $c_q$ independent of $z$.  We now have to study when
\begin{equation} \label{q4e}
  \int_{0}^{t}  \frac{1}{ (\text{det}(Q_s))^{1/2q} } \, \dd s < \infty \, .
\end{equation}   
 By a direct computation  for $s \to 0^+$
$$
(\text{det}(Q_s))^{1/2q} \sim  c  (s^{4d} )^{1/2q} \, ,
$$ 
hence the result follows for $q>2d$.   
\end{proof}

\vv We state now the classical Khas'minskii lemma for  an OU process. 
The original version of this lemma (\cite{Kh59}, or \cite[Section 1, Lemma 2.1]{Sz98}) is stated for a Wiener process, but the proof only relies on the 
 Markov property of the process, so that its extension
 to this setting requires no modification.
 
\begin{lemma}[Khas'minskii 1959] \label{lem Khas}
 Let $(L_t^z)$ be our $2d-$dimensional OU process starting from $z$ at time $0$ and
 $f:\R^{2d}\to \R$ be a positive Borel function. Then, for any $T>0$ such that
 \begin{equation}
  \alpha = \sup_{z\in \R^{2d}} \E \Big[ \int_0^T f \big( L_t^z \big) \,\dd t \Big] < 1 \,,
 \end{equation}
we also have
 \begin{equation}
  \sup_{z\in \R^{2d}} \E \Big[ \exp\Big( \int_0^T f \big( L_t^z \big) \, \dd t
  \Big) \Big] < \frac{1}{1-\alpha} \, .
 \end{equation}
\end{lemma}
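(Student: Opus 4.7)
The plan is to follow the classical Khas'minskii argument, which rests on a Taylor expansion of the exponential combined with iterated use of the Markov property of the OU process $(L_t^z)$. The key observation is that although the integral $\int_0^T f(L_t^z)\,\dd t$ is over a fixed horizon $T$, whenever we condition on $\mathcal{F}_s$ the tail $\int_s^T f(L_t^z)\,\dd t$ becomes $\int_0^{T-s} f(L_t^{L_s^z})\,\dd t$, whose expectation under the starting-point supremum is still bounded by $\alpha$ (since $T-s \le T$ and $f\ge 0$).

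First I would write, using the symmetrization of the simplex,
\begin{equation*}
\Big(\int_0^T f(L_t^z)\,\dd t\Big)^n = n!\int_{0\le t_1\le\cdots\le t_n\le T} f(L_{t_1}^z)\cdots f(L_{t_n}^z)\,\dd t_1\cdots\dd t_n,
\end{equation*}
so that expanding the exponential as a power series and exchanging sum and expectation (all terms are nonnegative, so Tonelli applies) gives
\begin{equation*}
\E\Big[\exp\Big(\int_0^T f(L_t^z)\,\dd t\Big)\Big] = \sum_{n\ge 0} \E\Big[\int_{0\le t_1\le\cdots\le t_n\le T} f(L_{t_1}^z)\cdots f(L_{t_n}^z)\,\dd t_1\cdots\dd t_n\Big].
\end{equation*}

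Next I would prove by induction on $n$ that each term is bounded by $\alpha^n$ uniformly in $z$. For the induction step, condition on $\mathcal{F}_{t_{n-1}}$ inside the innermost $\dd t_n$-integral; by the Markov property of the OU process,
\begin{equation*}
\E\Big[\int_{t_{n-1}}^T f(L_{t_n}^z)\,\dd t_n \,\Big|\, \mathcal{F}_{t_{n-1}}\Big] = \E\Big[\int_0^{T-t_{n-1}} f(L_s^{y})\,\dd s\Big]\Big|_{y=L_{t_{n-1}}^z} \le \alpha,
\end{equation*}
where the last inequality uses positivity of $f$ and the hypothesis. Pulling this constant $\alpha$ out and applying the induction hypothesis to the remaining $(n-1)$-fold integral yields the bound $\alpha^n$.

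Summing the geometric series gives $\sup_z \E[\exp(\int_0^T f(L_t^z)\dd t)] \le \sum_{n\ge 0}\alpha^n = \tfrac{1}{1-\alpha}$, which is the desired estimate (strict inequality comes from observing that the $n=0$ term equals $1$ and the $n=1$ term is at most $\alpha<1$, hence the series is finite). The only mild obstacle is a careful statement of the Markov property in the conditional form used above, but since $(L_t^z)$ is a strong Markov process (linear SDE with Lipschitz coefficients driven by Brownian motion), this is standard and requires no modification from the Brownian case treated in \cite{Kh59, Sz98}.
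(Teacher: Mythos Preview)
Your proposal is correct and reproduces precisely the classical Khas'minskii argument that the paper invokes without spelling out: the paper gives no proof of its own, merely noting that the Brownian-motion proof in \cite{Kh59,Sz98} uses only the Markov property and therefore carries over to the OU process $(L_t^z)$ verbatim. One small caveat: your bound yields $\sum_{n\ge 0}\alpha^n = \tfrac{1}{1-\alpha}$, hence $\le$ rather than the strict inequality in the statement; your parenthetical justification for strictness does not actually establish it, and indeed the standard formulation (e.g.\ Sznitman) is with $\le$, which is all that is needed downstream.
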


We now introduce a generalization of the previous Khas'minskii lemma which we will
use to prove the Novikov condition, allowing us to apply Girsanov's theorem.

\begin{proposition} \label{Khas}  
 Let $(L_t)$ be the OU process solution of \eqref{ou1}. Let $f:\R^{2d} \to \R$ belong to
$L^q(\R^{2d})$ for  $q>2d$. Then, there exists a constant $K_f$ depending on $d,q,T$
and continuously depending on $\|f \|_{L^q(\R^{2d})}$ such that 
\begin{equation}
 \sup_{z\in\R^{2d}} \E \Big[ \exp \Big( \int_0^T |f(L_s^z)| \dd s \Big) \Big] = K_f <\infty \, . 
\end{equation}
\end{proposition}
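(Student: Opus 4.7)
The strategy is to reduce the claim to the classical Khas'minskii lemma (Lemma \ref{lem Khas}) by partitioning $[0,T]$ into finitely many subintervals, each of length short enough that the standard hypothesis of Khas'minskii holds uniformly in the starting point, and then to glue the pieces together via the Markov property of the OU process $(L_t)$.

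First I would revisit the computation in the proof of Lemma \ref{lem OU-Lp}. That argument actually yields, for any $T' \in (0,T]$ and any $z\in\R^{2d}$,
\begin{equation*}
\E\Big[\int_0^{T'} |f(L_s^z)|\,\dd s\Big] \;\le\; c_q\,\|f\|_{L^q(\R^{2d})}\int_0^{T'}\frac{\dd s}{(\det Q_s)^{1/(2q)}},
\end{equation*}
with the integrand behaving like $s^{-2d/q}$ as $s\to 0^+$ (see \eqref{q4e} and the subsequent line). Since $q>2d$, the function
\begin{equation*}
g(T') \;:=\; c_q\int_0^{T'}\frac{\dd s}{(\det Q_s)^{1/(2q)}}
\end{equation*}
is continuous, strictly increasing on $[0,T]$, and satisfies $g(0)=0$. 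Hence, given $M:=\|f\|_{L^q(\R^{2d})}$, I can select $T'=T'(M)\in(0,T]$ continuously depending on $M$ so that $g(T')\,M \le 1/2$; concretely one may take $T'(M)=\min\{T,\,g^{-1}(1/(2(M+1)))\}$.

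With this choice, $\alpha := \sup_{z}\E[\int_0^{T'}|f(L_s^z)|\,\dd s] \le 1/2$, so Lemma \ref{lem Khas} applied on $[0,T']$ yields
\begin{equation*}
\sup_{z\in\R^{2d}}\E\Big[\exp\Big(\int_0^{T'}|f(L_s^z)|\,\dd s\Big)\Big] \;\le\; 2.
\end{equation*}
Now set $n=\lceil T/T'(M)\rceil$ and partition $[0,T]$ into subintervals of length $\le T'$ with endpoints $0=t_0<t_1<\dots<t_n=T$. Using the Markov property of the OU process together with the tower property, conditioning on $\mathcal F_{t_{n-1}}$, I obtain
\begin{equation*}
\E_z\Big[\exp\Big(\int_{t_{n-1}}^{t_n}|f(L_s)|\,\dd s\Big)\,\Big|\,\mathcal F_{t_{n-1}}\Big] \;=\; \E_{L_{t_{n-1}}}\Big[\exp\Big(\int_0^{t_n-t_{n-1}}|f(L_s)|\,\dd s\Big)\Big] \;\le\; 2,
\end{equation*}
and iterating backwards over the subintervals gives
\begin{equation*}
\sup_{z\in\R^{2d}}\E_z\Big[\exp\Big(\int_0^T|f(L_s)|\,\dd s\Big)\Big] \;\le\; 2^n \;\le\; 2^{\,T/T'(M)+1}.
\end{equation*}
Since $T'(M)$ depends continuously on $M=\|f\|_{L^q(\R^{2d})}$, the right-hand side defines the desired constant $K_f$, which is continuous in $\|f\|_{L^q(\R^{2d})}$.

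The main technical issue is verifying that the short-time integrability \eqref{q4e} really allows $g(T')$ to be made as small as one wants by shrinking $T'$; this is precisely where the hypothesis $q>2d$ is needed, via the asymptotic $(\det Q_s)^{-1/(2q)}\sim c\,s^{-2d/q}$ at $s=0$. Everything else is a routine Markov-property gluing, so I do not expect substantial obstacles beyond carefully tracking the dependence of $T'$ (and hence of $n$ and of $K_f$) on $\|f\|_{L^q(\R^{2d})}$.
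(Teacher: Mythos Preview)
Your argument is correct but takes a genuinely different route from the paper. The paper avoids time-slicing altogether: it fixes an exponent $a>1$ with $q/a>2d$, applies Lemma~\ref{lem OU-Lp} to $|f|^a\in L^{q/a}(\R^{2d})$ to obtain $\sup_z \E\big[\int_0^T |f(L_s^z)|^a\,\dd s\big]\le C\|f\|_{L^q}^a$, and then uses Young's inequality $|f|\le \frac{\varepsilon}{a}|f|^a + C_\varepsilon$ with $\varepsilon=(C\|f\|_{L^q}^a)^{-1}\wedge 1$, so that Khas'minskii's lemma applies to $\frac{\varepsilon}{a}|f|^a$ on the full interval $[0,T]$ in one shot. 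In other words, both proofs force the Khas'minskii hypothesis $\alpha<1$ by a rescaling: the paper rescales the integrand, you rescale the time window and then glue via the Markov property. Your approach is slightly more elementary in that it needs no auxiliary exponent $a$, while the paper's approach avoids the explicit Markov gluing and yields a somewhat more transparent expression for $K_f$ and its continuous dependence on $\|f\|_{L^q}$.
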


\begin{proof} 
From Lemma \ref{lem OU-Lp}, for any $a>1$ s.t. $q/a>2d$ we get 
\begin{equation*}
 \sup_{z\in\R^{2d}} \E \Big[  \int_0^T |f|^a (L_s^z) \, \dd s  \Big] 
 \le C \| f\|_{L^q(\R^{2d})}^a  \, .
\end{equation*}
Setting  $\varepsilon= (C \| f \|_{L^q}^a )^{-1} \wedge 1$, we apply Young's inequality: $|f(z)| \le \frac{\epsilon}{a} |f(z)|^a + C_{\epsilon} \frac{a-1}{a}$  
and Khas'minskii's Lemma \ref{lem Khas} replacing $f$ with  $\frac{\varepsilon}{a} |f|^a$ to get 
\begin{align*}
 \sup_{z\in\R^{2d}} \hspace{-1mm} \E \Big[ \exp \Big( \int_0^T |f(L_s^z)| \dd s \Big) \Big] \hspace{-0.5mm} \le \hspace{-1mm}
 \sup_{z\in\R^{2d}} \hspace{-1mm} \E \Big[ \exp \Big( \int_0^T \frac{\varepsilon}{a} |f(L_s^z)|^a
 \dd s \Big) \Big] e^{T c_{\varepsilon,a}} 
 \hspace{-0.5mm} \le \hspace{-0.5mm} \frac{1}{1-\alpha} e^{C T} \hspace{-1mm} < \hspace{-1mm}\infty  .
\end{align*}
\end{proof}

 The next result can be proved by using the Girsanov theorem (cf. \cite{IW} and \cite{LS}). 

\begin{theorem} \label{gi1} Suppose that in \eqref{eq Z} we have 
$F \in L^p(\R^{2d}; \R^{d})$ with $p>4d$. Then the  following statements hold. 

(i) Equation \eqref{eq Z} is well posed in the weak sense.

(ii) For any $z \in \R^{2d}$, $T>0$ the law in the space of continuous functions 
$C([0, T]; \R^{2d})$ of the solution $Z =(Z_t)= (Z_t^z)$ to the equation 
(\ref{eq Z}) is equivalent to the law of the OU process $L = (L_t)= (L_t^z)$.

(iii) For any $t > 0$,  $z \in \R^{2d}$, the law of $Z_t$ is equivalent to the 
Lebesgue measure in $\R^{2d}$.
\end{theorem}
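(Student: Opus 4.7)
The plan is to prove all three statements simultaneously by Girsanov's theorem, exploiting the crucial algebraic fact that the drift $B = R F$ lies in the range of the noise coefficient $R$ (compare \eqref{ABQ}), which is the only reason a degenerate-noise Girsanov transform is available.

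First I would set up the transformation. Fix $z \in \R^{2d}$ and $T>0$, and let $L = L^z$ be the OU process of \eqref{ou1} on the reference space $(\Omega, \calF, (\calF_t), \PP)$ driven by $W$. Define
\begin{equation*}
\rho_T = \exp\Bigl(\int_0^T F(L_s)\cdot \dd W_s - \tfrac{1}{2}\int_0^T |F(L_s)|^2 \, \dd s\Bigr).
\end{equation*}
The key analytic step is Novikov's condition $\E\bigl[\exp(\tfrac{1}{2}\int_0^T|F(L_s)|^2 \dd s)\bigr] < \infty$. Here I would invoke Proposition \ref{Khas} applied to the scalar function $|F|^2$: since $F \in L^p(\R^{2d})$ with $p>4d$, we have $|F|^2 \in L^{p/2}(\R^{2d})$ with $p/2 > 2d$, so the proposition yields the required exponential integrability, uniformly in the starting point $z$. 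This uniformity is the heart of the argument and is what makes the Khas'minskii-type bound of Proposition \ref{Khas} indispensable. Consequently $\rho_T$ is a true martingale and defines an equivalent probability measure $\tilde\PP = \rho_T \PP$. Under $\tilde\PP$, $\tilde W_t := W_t - \int_0^t F(L_s)\,\dd s$ is a Brownian motion, so
\begin{equation*}
\dd L_t = A L_t \, \dd t + R \, \dd W_t = (AL_t + R F(L_t))\,\dd t + R\,\dd\tilde W_t = (AL_t + B(L_t))\,\dd t + R\,\dd\tilde W_t,
\end{equation*}
i.e.\ $(L, \tilde W)$ on $(\Omega, \calF, \tilde \PP)$ is a weak solution of \eqref{eq Z} starting from $z$. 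This proves weak existence and, since $\tilde\PP \sim \PP$ on $\calF_T$, gives at once that the law of $Z^z$ on $C([0,T];\R^{2d})$ is equivalent to the law of $L^z$, which is statement (ii).

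For weak uniqueness (completing (i)), I would reverse the Girsanov transform: given any weak solution $(Z, W')$ of \eqref{eq Z}, introduce
\begin{equation*}
\eta_T = \exp\Bigl(-\int_0^T F(Z_s)\cdot \dd W'_s - \tfrac{1}{2}\int_0^T |F(Z_s)|^2 \, \dd s\Bigr),
\end{equation*}
and check Novikov again. The subtlety is that the law of $Z$ is a priori unknown, so Proposition \ref{Khas} does not apply directly to $Z$. I would handle this by a standard localization: stop at $\tau_n = \inf\{t : |Z_t|\ge n\}$ and approximate $F$ by bounded $F_n$; alternatively, since we already have one weak solution under which $Z$ has law equivalent to $L$ (on which Novikov holds), one first constructs the canonical weak solution above and then uses that any other weak solution is obtained from an OU law via the same density, so laws coincide. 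Either route gives weak uniqueness by the one-to-one correspondence between laws under Girsanov.

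Finally, for (iii), I would note that the time-$t$ marginal of the OU process $L_t^z$ is Gaussian with mean $e^{tA}z$ and covariance $Q_t$, which by \eqref{dia1} and Remark \ref{rem hypoellipticity} is positive definite for every $t>0$; hence this marginal has a strictly positive density with respect to Lebesgue measure on $\R^{2d}$. Since by (ii) the path laws of $Z^z$ and $L^z$ are equivalent on $\calF_T$, their marginals at time $t\in(0,T]$ are equivalent as well, and therefore the law of $Z_t^z$ is equivalent to Lebesgue measure. The main obstacle in the whole argument is the uniform-in-$z$ Novikov bound, which is exactly what Proposition \ref{Khas} is designed to deliver; once that is in hand the rest is standard Girsanov bookkeeping.
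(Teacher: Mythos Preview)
Your existence argument via Girsanov and Proposition \ref{Khas} is exactly what the paper does, and your derivation of (iii) from (ii) matches as well. Your route to (ii) --- observing that the Girsanov-constructed weak solution is literally $L$ under an equivalent measure, so its path law is equivalent to $\mu_L$ --- is in fact slightly more direct than the paper's argument, which re-derives this equivalence after uniqueness by writing $\Phi = \exp[G(L)]$ for a path-measurable $G$ (using that $W_t$ is the last $d$ coordinates of $L_t$). Both arrive at the same conclusion; yours is cleaner once uniqueness is in hand.

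The substantive divergence, and the only real gap, is in weak uniqueness. The paper does \emph{not} reverse Girsanov. Instead it observes that the velocity component $V$ is a process of \emph{diffusion type} in the sense of Liptser--Shiryaev: the drift $b_s(V)=F\bigl(x_0+\int_0^s V_r\,\dd r,\,V_s\bigr)$ is adapted to the filtration generated by $V$ alone. Theorems 7.5 and 7.7 of \cite{LS} then give $\mu_V\sim\mu_W$ with an explicit Radon--Nikodym density depending only on the path of $W$, from which uniqueness in law follows immediately. The advantage is that this requires only $\int_0^T|b_s(V)|^2\,\dd s<\infty$ a.s., not a Novikov-type exponential bound for an a priori unknown solution.

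Your proposed reverse-Girsanov route can be made to work, but as written it has a gap. The ``alternative'' you offer --- ``any other weak solution is obtained from an OU law via the same density, so laws coincide'' --- is circular: that is precisely the statement to be proved. The localization route (stopping at $\tau_n$ and truncating $F$) is viable, and ultimately one shows that under the reverse-Girsanov measure $Z$ becomes an OU process whose driving Brownian motion is recoverable from the path of $Z$ (indeed $\hat W_t$ is the last $d$ coordinates of $Z_t-z_0$), so the density is a fixed measurable functional of the path and uniqueness follows. But this needs to be carried out, not just asserted; the paper sidesteps the whole issue by appealing to \cite{LS}.
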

\begin{proof}  \textbf{ (i)} \underline{Existence.} We argue similarly to the proof of Theorem IV.4.2 in \cite{IW}. Let $T>0.$ 
 Starting from an Ornstein-Uhlenbeck process (cf. \eqref{ou1})  
$$
L_t = L_t^z =  z + \int_0^t A L_s \, \dd s +   R W_t \, , \;\; t \ge 0
$$
defined on a stochastic basis $(\Omega, {\cal F}, ({\cal F}_t), \PP)$ on which it is defined an $\R^d$-valued Wiener process $(W_t) = W$, we can define the process 
 \begin{equation}  \label{ls}
H_t := W_t - \int_0^t F (L_r) \, \dd r \, , \;\; t \in  [0,T].
 \end{equation}
Since $p > 4d$, 
 Proposition \ref{Khas} with $f=F^2$ provides the Novikov condition ensuring that
the process 
 $$
   \Phi_t  = \exp \Big ( \int_0^t {\langle}  F(L_s), \dd W_s {\rangle}
   \, -  \, \frac{1}{2}  \int_0^t | F(L_s)
   |^2 \dd s  \Big),\;\; t \in [0,T],
$$
 is an ${\cal F}_t$-martingale. Then, by the Girsanov theorem 
$(H_t)_{t \in [0,T]} $ is a $d$-dimensional  Wiener process on
$(\Omega, {\cal F}_T, ({\cal F}_s)_{s \le T}, \Q)$, where $\Q$ is
the probability measure on $(\Omega, {\cal F}_T)$
having density $\Phi = \Phi_T $ with respect to $\PP$.
We have that on the new probability space
$$
L_t = L_t^z =   z + \int_0^t A L_s \, \dd s + \int_0^t R F (L_s) \, \dd s +   R H_t \, , \;\; t \in  [0, T]
$$ 
(cf. \eqref{ABQ}). Hence $L =  (L_t)$ is a solution to \eqref{eq Z} on 
$(\Omega,  {\cal F}_T,  ({\cal F}_s)_{s \le T}, \Q)$.

\hh \underline{Uniqueness.}
 To prove weak uniqueness we use some results from \cite{LS}. 
First note that the process 
\begin{equation}
\label{it22}
V_t = v_0 + \int_0^t F(X_s, V_s) \, \dd s + W_t \, 
\end{equation} 
(cf. \eqref{Xt}) is a process of diffusion type according to Definition 7 in page 118 
of \cite[Section 4.2]{LS}. Indeed, since 
$
X_t = x_0 $ $ + \int_0^t V_s \, \dd s
$
we have 
$$
V_t = v_0 + \int_0^t F\Big(x_0 + \int_0^s V_r \, \dd r, V_s \Big) \, \dd s + W_t
$$
and  the process $ (b_s (V))_{s \in [0,T]}
= 
(F( x_0 + \int_0^s V_r \, \dd r, V_s))_{s \in [0,T]}$ is $({\cal F}^{V}_t)$-adapted
(here ${\cal F}^{V}_t$ is the $\sigma$-algebra generated by $\{ V_s,\; s \in [0,t]\}$).

We can apply to $V = (V_t)$ Theorem 7.5 on page 257 of \cite{LS} (see also paragraph 7.2.7 in \cite{LS}): since $\int_0^T |b_s(V) |^2 \dd s < \infty$, $\PP$-a.s., we obtain that 
$$
\mu_V \sim \mu_{W} \; \; \text{on} \; \; {\cal B}\big(C([0,T]; \R^d)\big) \, ,
$$
i.e. the laws of $V= (V_t)_{t \in [0,T]}$ and $W = (W_t)_{t \in [0,T]}$ are equivalent. Moreover, by \cite[Theorem 7.7]{LS}, the Radon-Nykodim derivative $\frac{\mu_V}{\mu_W}(x)$, $x \in C([0,T]; \R^d)$, verifies 
$$
\frac{\mu_V}{\mu_W}(W) =  \exp \Big ( \int_0^T {\langle}  b_s  (W), \dd W_s {\rangle}
   \, -  \, \frac{1}{2}  \int_0^T | b_s (W)
   |^2 \, \dd s  \Big).
$$
It follows that, for any Borel set $B \in {\cal B}(C([0,T]; \R^d))$,
$$
\E \big[ 1_B (V) \big] = \E^{\mu_{W}} \Big[ 1_B \,  \frac{\mu_V}{\mu_W} \Big] =
 \E \Big[1_B(W) \, \exp \Big ( \int_0^T {\langle}  b_s (W), \dd W_s {\rangle}
   \, -  \, \frac{1}{2}  \int_0^T |  b_s (W)
   |^2 \, \dd s  \Big)\Big];
$$
this shows easily that uniqueness in law holds.

\hh Clearly  (iii) follows from (ii). Let us prove (ii). 
 
\hh {\bf (ii)}   The processes $L = (L_t)$ and $Z = (Z_t)$, $t \in [0,T]$, satisfy the
same equation \eqref{eq Z} in $(\Omega , \calF, {\cal F}_t, \Q, (H_t))$ and
$(\Omega , \calF, {\cal F}_t, \PP, (W_t)) $
 respectively. Therefore, by weak uniqueness,  the laws of  $L$ and $Z$ on 
$C([0,T]; \R^{2d})$ are the same
  (under the probability  measures $\Q$ and $\PP$ respectively).
 Hence, for any Borel set $J \subset C([0,T]; \R^{2d})$, we have
$$ 
\E [1_{J} (Z)] = \E [1_{J} (L) \, \Phi] \, .
$$
Since $W_t = ( \langle L_t, e_{d+1} \rangle, \ldots,   \langle L_t, e_{2d} 
\rangle )$ we see that each $W_s$  
is measurable with respect to the $\sigma$-algebra generated by the random 
variable $L_s$, $s \le T$. By considering $L$ as a random variable with values 
in $C([0,T]; \R^{2d})$, we obtain that  
$$
\Phi = \exp [ G(L) ]
$$
for some  measurable function  $G : M= C([0,T]; \R^{2d}) \to \R$. Using the laws 
$\mu_Z$ of $Z$ and $\mu_L$ of $L$ we find 
$$
\int_{M} 1_J(\omega) \mu_{Z} (\dd \omega) = \E [1_{J} (Z)] = \E \big[1_{J} (L) \, \exp 
[ G(L) ] \big]
 =  \int_{M} 1_J(\omega) \;  \exp [ G(\omega) ] \mu_{L} (\dd \omega) \, .
$$
Finally note that $|G (\omega)| < \infty$, for any $\omega \in M$ $\mu_L$-a.s. 
(indeed  
$\int_{M}   |G(\omega)| \mu_{L} (\dd \omega) $ $ = E [ |G(L)|] < \infty$). It 
follows that $\exp [ G(\omega) ] >0$, for any $\omega \in M$ $\mu_L$-a.s., and 
this shows that $\mu_L$ is equivalent to $\mu_Z$.
\end{proof}

\smallskip
We can now prove that the result of Lemma \ref{lem OU-Lp} holds also replacing the 
OU process $L_t$ with $Z_t$.

\begin{lemma} \label{lem Z-Lp}
  Let $Z_t^z$ be a solution of \eqref{eq Z}. Let $f:\R^{2d} \to \R$ belong to $L^q(\R^{2d})$ for some $q> 2d$. Then there exists a constant $C$ depending on $q,d$ and
$T$ such that
\begin{equation}\label{fZ - Lp}
 \sup_{z\in\R^{2d}} \E \Big[  \int_0^T f(Z_s^z) \, \dd s  \Big] \le C \| f\|_{L^q(\R^{2d})} 
\end{equation}
and a constant $K_f$ depending on $q,d, T$ and continuously depending on $\|f\|_{L^q(\R^{2d})}$ for which
\begin{equation}\label{exp-fZ - Lp}
 \sup_{z\in\R^{2d}} \E \Big[  \exp \Big( \int_0^T f(Z_s^z) \, \dd s\Big)  \Big] \le K_f  \, .
\end{equation}
 \end{lemma}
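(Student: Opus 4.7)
The plan is to transfer the Ornstein--Uhlenbeck estimates (Lemma \ref{lem OU-Lp} and Proposition \ref{Khas}) to the diffusion $Z^z$ using the Girsanov transform that already appears in the proof of Theorem \ref{gi1}. That construction provides, on the same stochastic basis, a density
\[
\Phi_T = \exp\Big(\int_0^T F(L^z_s)\cdot dW_s - \tfrac12\int_0^T |F(L^z_s)|^2\,ds\Big)
\]
such that the law of $Z^z$ under $\PP$ equals the law of $L^z$ under $\Q=\Phi_T\,\PP$; equivalently, $\E[G(Z^z)]=\E[G(L^z)\,\Phi_T]$ for every non-negative measurable functional $G$ on path space. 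With this reduction in hand, both \eqref{fZ - Lp} and \eqref{exp-fZ - Lp} will follow from Hölder's inequality, provided I secure a uniform moment bound $\sup_z\E[\Phi_T^r]\le C_r<\infty$ for every $r\ge 1$.

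I would establish this moment bound as follows. Hypothesis \ref{hyp-holder} together with the embedding $H^s_p\hookrightarrow L^p$ gives $F\in L^p(\R^{2d})$ with $p>6d$, whence $|F|^2\in L^{p/2}(\R^{2d})$ with $p/2>3d>2d$. Proposition \ref{Khas} then supplies, for every constant $c>0$, the estimate $\sup_z\E\big[\exp\big(c\int_0^T|F(L^z_s)|^2\,ds\big)\big]<\infty$, depending continuously on $\|F\|_{L^p}$. Choosing $c=2r^2$ yields the Novikov condition that turns $\mathcal{E}(2r\int F\cdot dW)$ into a true martingale of mean one; a single application of Cauchy--Schwarz to the factorization
\[
\Phi_T^r = \mathcal{E}\Big(2r\int F\,dW\Big)_T^{1/2}\cdot \exp\Big(\tfrac{r(2r-1)}{2}\int_0^T|F(L^z_s)|^2\,ds\Big)
\]
then controls $\E[\Phi_T^r]$ by $\E[\exp(r(2r-1)\int|F|^2\,ds)]^{1/2}$, again finite uniformly in $z$ by Proposition \ref{Khas}.

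To conclude, pick $p'\in(1,q/(2d))$ (possible because $q>2d$) and set $q'=p'/(p'-1)$. For \eqref{fZ - Lp}, Hölder in the Girsanov identity gives
\[
\E\Big[\int_0^T|f(Z^z_s)|\,ds\Big]\le\E[\Phi_T^{q'}]^{1/q'}\,\E\Big[\Big(\int_0^T|f(L^z_s)|\,ds\Big)^{p'}\Big]^{1/p'},
\]
and the second factor is controlled via Jensen followed by Lemma \ref{lem OU-Lp} applied to $|f|^{p'}\in L^{q/p'}$ (noting $q/p'>2d$), producing a bound proportional to $\|f\|_{L^q}$. For \eqref{exp-fZ - Lp} the same Hölder split yields
\[
\E\big[e^{\int_0^T f(Z^z_s)\,ds}\big]\le\E[\Phi_T^{q'}]^{1/q'}\,\E\big[e^{p'\int_0^T f(L^z_s)\,ds}\big]^{1/p'},
\]
and Proposition \ref{Khas} applied to $p'|f|\in L^q$ bounds the second factor by a constant depending continuously on $p'\|f\|_{L^q}$, hence on $\|f\|_{L^q}$, giving $K_f$ with the required continuous dependence.

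The main obstacle is the uniform moment bound $\sup_z\E[\Phi_T^r]<\infty$: it requires simultaneously invoking the Novikov condition (to promote the non-negative local martingale $\mathcal{E}(2r\int F\,dW)$ to a true martingale of mean one) and an independent exponential moment of $\int|F|^2\,ds$. Both ingredients rest on the single integrability property $|F|^2\in L^{p/2}$ with $p/2>2d$, so once the Girsanov reduction is in place the argument closes cleanly.
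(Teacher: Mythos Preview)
Your argument is correct and follows essentially the same route as the paper: Girsanov reduction from $Z^z$ to $L^z$, a H\"older split with exponent close to $1$ so that $|f|^{p'}\in L^{q/p'}$ with $q/p'>2d$, and a uniform moment bound on the Girsanov density obtained from Proposition~\ref{Khas} applied to $|F|^2\in L^{p/2}$. The only cosmetic difference is that you spell out the Cauchy--Schwarz factorization $\Phi_T^r=\mathcal{E}(2r\!\int F\,dW)_T^{1/2}\exp\big(\tfrac{r(2r-1)}{2}\!\int|F|^2\,ds\big)$ more explicitly than the paper, which simply rewrites $\Phi^{a'}$ and invokes Proposition~\ref{Khas} directly.
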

 \begin{proof}
Recall that $F\in L^p(\R^{2d})$ for $p>4d$. As seen in the previous proof, the laws of $L_t$ and $Z_t$ are the same under the
   $\Q$ and $\PP$ respectively. Then, applying H\"older's inequality with $1/a+1/a' =1$ we have
\begin{align*}
 \E^\PP \Big[ \int_0^T f(Z_s) \, \dd s \Big] = \E^\Q \Big[ \int_0^T f(L_s)
 \, \dd s \Big]
 \le \E^\PP \Big[ \int_0^T f^a(L_s) \, \dd s \Big]^{1/a}
 \E^\PP \Big[ \Phi^{a'} \Big]^{1/a'} \, .
\end{align*}
Taking $a>1$ small enough so that $q/a>2d$, we can apply Lemma \ref{lem OU-Lp} to $|f|^a$ and control the first expectation on the right hand side with a constant times
$\| f\|_{L^q(\R^{2d})} $. Then we write
\begin{align*}
   \Phi^{a'}  = \exp \Big ( \int_0^T {\langle}  a' F(L_s), \dd W_s {\rangle}
   \, -  \, \frac{1}{2}  \int_0^T | a' F(L_s)   |^2 \, \dd s + \frac{(a')^2 - a'}{2} \int_0^T | F(L_s)   |^2 \, \dd s    \Big) \, ,
\end{align*}
which has finite expectation due to Proposition \ref{Khas}. Both these estimates are uniform
in $z$, so that \eqref{fZ - Lp} follows.
 Similarly, we have
\begin{align*}
 \E^\PP \Big[ \exp\Big( \int_0^T f(Z_s) \, \dd s\Big) \Big]  \le \E^\PP \Big[\exp\Big( 2 \int_0^T f(L_s) \, \dd s \Big) \Big]^{1/2}
 \E^\PP \Big[ \Phi^{2} \Big]^{1/2} \, .
\end{align*}
Both terms on the right hand side are finite due to Proposition \ref{Khas}: this proves \eqref{exp-fZ - Lp}. \end{proof}

\medskip

Recall that we are always assuming  Hypothesis \ref{hyp-holder}.

\begin{lemma}\label{U moments estimate}
Any process $(Z_t)$  which is solution of the SDE \eqref{eq Z} has finite moments of any order, uniformly in $t\in[0,T]$: for any $q\ge2$
\begin{equation}\label{U moments}
\E \big[ | Z_t^z |^q \big] \le C_{z,q,d,T} < \infty \, .
\end{equation}
\end{lemma}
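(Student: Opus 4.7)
The plan is to take the integral representations
\begin{align*}
X_t &= x_0 + tv_0 + \int_0^t (t-s) F(Z_s)\,\dd s + \int_0^t W_s\,\dd s,\\
V_t &= v_0 + \int_0^t F(Z_s)\,\dd s + W_t,
\end{align*}
bound $|Z_t|^q$ pointwise by a sum of elementary terms, and take expectation. Using $|a+b+c+d+e|^q\le 5^{q-1}(|a|^q+\cdots+|e|^q)$ and the crude bounds $t\le T$, $|t-s|\le T$, we reduce the proof to controlling the three types of random terms
\begin{equation*}
\E\Big[\Big(\int_0^t|F(Z_s)|\,\dd s\Big)^q\Big],\qquad \E\Big[\Big(\int_0^t|W_s|\,\dd s\Big)^q\Big],\qquad \E[|W_t|^q].
\end{equation*}
The last two are standard (the second is $\le T^{q-1}\int_0^T \E[|W_s|^q]\,\dd s$), so the only real work is the first.

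For the $F$-term, the key observation is that Hypothesis \ref{hyp-holder} yields $F\in L^p(\R^{2d};\R^d)$ with $p>6d$, because $H^s_p(\R^d)\subset L^p(\R^d)$, so $\int_{\R^{2d}}|F(x,v)|^p\,\dd x\,\dd v=\int_{\R^d}\|F(\cdot,v)\|_{L^p}^p\,\dd v<\infty$. In particular $|F|\in L^p(\R^{2d})$ with $p>4d>2d$, so the exponential bound \eqref{exp-fZ - Lp} from Lemma \ref{lem Z-Lp} applies, giving
\begin{equation*}
\sup_{z\in\R^{2d}}\E\Big[\exp\Big(\int_0^T|F(Z_s^z)|\,\dd s\Big)\Big]\le K_{|F|}<\infty.
\end{equation*}
Combining this with the elementary inequality $y^q\le q!\,e^y$ valid for every $y\ge0$, and using Jensen (or H\"older) to bring the power inside $T^{q-1}$ is not even necessary here, we obtain
\begin{equation*}
\E\Big[\Big(\int_0^t|F(Z_s)|\,\dd s\Big)^q\Big]\le q!\,\E\Big[\exp\Big(\int_0^T|F(Z_s)|\,\dd s\Big)\Big]\le q!\,K_{|F|},
\end{equation*}
uniformly in $t\in[0,T]$ and in the starting point $z$.

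Putting the pieces together gives
\begin{equation*}
\E[|Z_t^z|^q]\le C_q\bigl(|x_0|^q+T^q|v_0|^q+T^qq!\,K_{|F|}+T^q\sup_{s\le T}\E|W_s|^q+\E|W_t|^q\bigr),
\end{equation*}
which is finite and depends only on $z,q,d,T$, establishing \eqref{U moments}. The main conceptual obstacle is that $F$ is merely $L^p$ (and not bounded), so polynomial moments of $\int_0^t|F(Z_s)|\,\dd s$ cannot be estimated by Gr\"onwall-type arguments; the exponential Khas'minskii-type bound of Lemma \ref{lem Z-Lp}, itself a consequence of the Girsanov change of measure carried out in Theorem \ref{gi1}, is precisely what is needed to bypass this difficulty.
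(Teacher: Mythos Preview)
Your proof is correct and follows essentially the same approach as the paper: both rely on the exponential bound \eqref{exp-fZ - Lp} from Lemma \ref{lem Z-Lp} to control the moments of $\int_0^T |F(Z_s)|\,\dd s$, which is indeed the only nontrivial ingredient. The one minor difference is that you work with the explicit integral representation \eqref{Xt} (where the linear drift $AZ_s$ has already been integrated out), so you can bound everything directly; the paper instead keeps the term $\int_0^t A Z_s\,\dd s$ in the equation for $Z_t$ and closes the estimate with a Gr\"onwall argument together with the Burkholder inequality for the stochastic integral. Your route is slightly more elementary and avoids Gr\"onwall altogether, but the substance is the same.
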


\begin{proof}
Recall that, setting $Z_t^z = Z_t$,
$$
Z_t = z + \int_0^t F(Z_s) \, \dd s + \int_0^t A Z_s \, \dd s + \int_0^t R \, \dd 
W_s \, .
$$
It follows from \eqref{exp-fZ - Lp} that for any $q \ge1$,
$
\E \big[ | \int_0^T F(Z_t) \, \dd t |^q \big] \le C.
$ 
Using  this bound, the  Burkholder  inequality for  stochastic integrals and the Gr\"onwall lemma we obtain the assertion. 
\end{proof}

\vv In the proof of strong uniqueness of solutions of the SDE \eqref{eq Z} we will have to deal with a new SDE with a Lipschitz drift coefficient, but a diffusion which only has derivatives in $L^p$. However, following un idea of Veretennikov \cite{V}, we can deal with increments of the diffusion coefficient on different solutions by means of the process $A_t$ defined in \eqref{def At}. The following lemma generalizes Veretennikov's result to our degenerate kinetic setting and even  provides bounds on the exponential of the process $A_t$. It will be a key element to prove continuity of the flow associated to  \eqref{eq Z} and will also be used in subsection \ref{subsec: derivatives} to study weak derivatives of the flow.

\begin{lemma}\label{lemma exp At}  %
Let $Z_t, \, Y_t$ be two solutions of \eqref{eq Z} starting from $z,\,y \in \R^{2d}$ respectively, $U:\R^{2d} \to \R^{2d}, \ U \in X_{p,s} \cap C^1_b$ (see Definition \ref{deff}), and set
\begin{equation}\label{def At} 
A_t =   \int_0^t 1_{ \{ Z_s  \not = Y_s \}}
 \; \frac{ \| [DU (Z_s)- DU (Y_s)]{R \,}\Big \|^2_{HS}
 } {|Z_s - Y_s|^2} \ \dd s  \, ,
\end{equation}
where $\| \cdot \|_{HS}$ denotes the Hilbert-Schmidt norm. Then, $A_t$ is a well-defined, real valued, continuous, adapted, increasing process such that 
$\E[A_T]< \infty$, for every $t\in[0,T]$
 \begin{equation}\label{At}
 \int_0^t  \Big\| \big[ DU (Z_s^z)- DU (Y_s^y) \big]{R \,} \Big\|^2_{HS}  \, \dd s = 
\int_0^t \big| Z_s^z - Y_s^y \big|^2 \, \dd A_s 
 \end{equation}
 and for any $k\in \R$, uniformly with respect to the initial conditions $z,y$:
\begin{equation}\label{exp At}
\sup_{z,y\in \R^{2d}} \E \big[ e^{k A_T} \big]  < \infty \, .
 \end{equation} 
\end{lemma}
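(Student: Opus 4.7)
The structural observation is that the noise matrix $R = (0,\mathbb{I}_d)^\top$ extracts the $v$-derivatives of $U$, so $[DU(Z_s)-DU(Y_s)]R = D_vU(Z_s)-D_vU(Y_s)$ and the integrand of $A_t$ is a difference quotient of $D_vU$. In the intended application $U$ solves \eqref{PDE-lambda}, so Theorem \ref{PDE-1!} (estimate \eqref{es41}) supplies the key regularity that the hypothesis ``$U \in X_{p,s}\cap C^1_b$'' does not give by itself, namely $D_vU \in W^{1,p}(\R^{2d})$ with $p>6d$. I will use this together with the classical Haj\l asz/maximal-function inequality for Sobolev functions:
\[
|D_vU(z)-D_vU(y)| \le |z-y|\bigl(g(z)+g(y)\bigr)\qquad\text{for }z,y\notin N,
\]
where $g := c_d\, M(|D(D_vU)|)\in L^p(\R^{2d})$ by boundedness of the Hardy--Littlewood maximal operator on $L^p$, $p>1$, and $N\subset\R^{2d}$ is a Lebesgue-null set.

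The next step is to transfer this pointwise estimate to the random trajectories. By Theorem \ref{gi1}(iii), the laws of $Z_s^z$ and $Y_s^y$ are absolutely continuous with respect to Lebesgue measure on $\R^{2d}$ for every $s>0$, so $\PP(Z_s\in N)=\PP(Y_s\in N)=0$. Hence $\PP\otimes \dd s$-a.e.\ on $\Omega\times(0,T]$,
\[
\frac{|D_vU(Z_s)-D_vU(Y_s)|^2}{|Z_s-Y_s|^2}\,\mathbf{1}_{\{Z_s\neq Y_s\}} \le 2\bigl(g(Z_s)^2+g(Y_s)^2\bigr).
\]
This simultaneously shows that the $A_t$-integrand is well-defined and gives a dominating bound that will drive both the $L^1$ and exponential estimates.

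Because $p>6d$, $g^2\in L^{p/2}(\R^{2d})$ with $p/2>3d>2d$, so Lemma \ref{lem Z-Lp} applies. Estimate \eqref{fZ - Lp} yields $\sup_z\E[\int_0^T g(Z_s)^2\,\dd s]<\infty$ (and likewise for $Y$), whence $\sup_{z,y}\E[A_T]<\infty$. For the exponential bound, the case $k\le 0$ is trivial since $A_T\ge 0$; for $k>0$, the splitting $e^{a+b}=e^ae^b$ followed by Cauchy--Schwarz reduces $\E[e^{kA_T}]$ to a product of $\E[\exp(4kC\int_0^T g(Z_s)^2\,\dd s)]^{1/2}$ and the analogous factor in $Y$, each finite and uniformly bounded in the starting point by Lemma \ref{lem Z-Lp} estimate \eqref{exp-fZ - Lp} applied to $4kC\,g^2\in L^{p/2}$. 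Continuity of $t\mapsto A_t$ (in fact absolute continuity), monotonicity and adaptedness are immediate from the representation $A_t=\int_0^t h_s\,\dd s$ with $h_s\ge 0$ an adapted integrand; identity \eqref{At} is a tautology after multiplying the $\dd A_s$-integrand by $|Z_s-Y_s|^2$, using that the Hilbert--Schmidt norm vanishes on $\{Z_s=Y_s\}$ so that the indicator is redundant on the right-hand side.

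The only subtle step is the transfer in the second paragraph: the Haj\l asz bound holds only outside a Lebesgue-null set $N$, and one must rule out the events $\{Z_s\in N\}$ and $\{Y_s\in N\}$ along the whole path. This is precisely what the absolute continuity of the marginal laws in Theorem \ref{gi1}(iii) (itself built on the hypoelliptic nondegeneracy of the kinetic Ornstein--Uhlenbeck covariance plus Girsanov) delivers. Once this is in hand, the remainder is a routine combination of the maximal inequality with the Khas'minskii-type moment estimates collected in Lemma \ref{lem Z-Lp}.
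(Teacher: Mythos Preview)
Your proof is correct and takes a genuinely different route from the paper's. The paper proceeds via the fundamental theorem of calculus along the segment joining $Y_s$ to $Z_s$: it writes
\[
|D_v\tilde u(Z_s)-D_v\tilde u(Y_s)| \le |Z_s-Y_s|\int_0^1 |D D_v\tilde u(rZ_s+(1-r)Y_s)|\,\dd r,
\]
introduces the auxiliary convex-combination process $Z_t^r = rZ_t+(1-r)Y_t$, and then performs an explicit Girsanov change of measure to identify $Z_t^r$ with an Ornstein--Uhlenbeck process, after which Lemma~\ref{lem OU-Lp} and Proposition~\ref{Khas} close the argument. Your approach replaces the line-integral step by the Haj\l asz maximal-function inequality $|D_vU(z)-D_vU(y)|\le |z-y|(g(z)+g(y))$ with $g=c_d\,M(|D(D_vU)|)\in L^p$, which bounds the $A_t$-integrand pointwise by $2(g(Z_s)^2+g(Y_s)^2)$; you then invoke Lemma~\ref{lem Z-Lp} directly on $Z$ and $Y$ separately, bypassing the auxiliary process $Z^r_t$ and its dedicated Girsanov argument entirely. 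This is a cleaner packaging: the Girsanov reduction is already absorbed into Lemma~\ref{lem Z-Lp}, so you avoid repeating it. The paper's line-integral argument, on the other hand, is slightly more elementary in that it does not appeal to the maximal-function characterisation of Sobolev spaces.

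Your observation that the stated hypothesis $U\in X_{p,s}\cap C^1_b$ does not by itself yield $D_vU\in W^{1,p}(\R^{2d})$ is correct and applies equally to the paper's proof, which also uses $D D_v\tilde u\in L^p$. In the only application (the solution of \eqref{PDE-lambda}) this regularity is supplied by \eqref{es41} of Theorem~\ref{PDE-1!}, exactly as you note.
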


\begin{proof}
Recall that 
$
B  = \begin{pmatrix}
    0 \\
   F
    \end{pmatrix}
$
and 
$
\big\| \big[ DU (Z_s)- DU (Y_s) \big]{R \,} \big\|^2_{HS} =
 \big| D_v \tilde u (Z_s) - D_v \tilde u(Y_s) \big|^2.
$ 
We have
\begin{align*}
\big| D_v \tilde u (Z_s)- D_v \tilde u (Y_s) \big| &=
 \Big| \sum_{i=1}^{2d} (Z_s - Y_s)^i \int_0^1 D_i D_v \tilde u (
  r Z_s + (1-r)Y_s) \, \dd r \Big|  \\
& \le |Z_s - Y_s| \int_0^1  \big| D D_v \tilde u \big(
  r Z_s + (1-r)Y_s \big) \big| \, \dd r  \, .
\end{align*}
Set $Z_t^r = r Z_t + (1-r)Y_t$ (the process $(Z_t^r)_{t \ge 0}$ depends on
 $r \in [0,1]$). We will first prove that
\begin{equation} \label{stima1}
  \E \Big[ \int_0^1 \, \dd r  \int_0^t  \big| D D_v \tilde u (
  Z_s^r) \big|^2 \dd s \Big] < \infty \, , \qquad  t>0 \, .
\end{equation}
By setting  $F_s^r = [rF(Z_s)+ (1-r)F(Y_s)] $ and $z^r = rz + (1-r)y$,
we obtain, for any $r \in [0,1]$,
$$
Z_t^r = z^r + \int_0^t   \begin{pmatrix}
    0 \\
   F_s^r
    \end{pmatrix}   \, \dd s 
 + \begin{pmatrix}
    0 \\
   W_t
    \end{pmatrix}     + \int_0^t A Z_s^r \dd s \, .
$$
Since $\int_0^T |{F}_{s}^{r}|^{2} \, \dd s \le C \int_0^T |F (Z_s)|^{2}
+ |F (Y_s)|^{2}  \, \dd s$, using H\"older's inequality and Lemma \ref{lem Z-Lp} we get for all $k\in\R$
\begin{equation}\label{eq expF uni}
\sup_{z,y} {\mathbb E}\Big [  \exp\Big( k \int_{0}^{T}\, |{F}_{s}^{r}|^{2} \, \dd s\Big) \Big]
\leq C_{k} \, < \, \infty \, ,
\end{equation}
where the constant $C_k$ depends on $k, p, T$ and $\|F\|_{L^p(\R^{2d})}$, but is uniform in
 $z,y$ and $r$. 

We can use again the Girsanov theorem (cf. the proof of Theorem  \ref{gi1}).  
 The process 
 \begin{equation*}
\tilde W_t := W_t + \int_0^t F_s^r \, \dd r \, , \qquad    t \in  [0,T] 
 \end{equation*}
 is a  $d$-dimensional  Wiener process on
$(\Omega, ({\cal F}_s)_{s \le T}, {\cal F}_T, \Q)$, where $\Q$ is
the probability measure on $(\Omega, {\cal F}_T)$
 having  the  density $\rho_r $ with respect to $\PP$,
 $$
   \rho_r  = \exp \Big ( \int_0^T {- \langle}  F_s^r  \, ,\, \dd W_s {\rangle}
   \, -  \, \frac{1}{2}  \int_0^T | F_s^r |^2 \, \dd s  \Big) \, .
$$  
Recalling the Ornstein-Uhlenbeck process $L_t$ (starting at $z^r$), 
 i.e.,
\begin{equation}  \label{ou}
 L_t = e^{tA} z^r + W_A(t),
\qquad \text{where} \quad
  W_A(t) = \int_0^t e^{(t-s) A}  R \, \dd W_s \, ,
\end{equation}
 we have:
  $$ 
Z_t^r = L_t + \int_0^t e^{(t-s) A} R \, F_s^r \, \dd s \, .
  $$
Hence
$$
Z_{t}^{r}=e^{tA} z^r\, + \, \int_{0}^{t}e^{(  t-s )  A} \, \dd \tilde W_{s}%
$$
is an OU process on $(\Omega,  ({\cal F}_s)_{s \le T}, {\cal F}_T, \rho_r \PP)$.

We now find, by the H\"older inequality,    for some $a>1$ such that   $1/a  + 1/a' =1$,
\begin{align}
 \E  \Big[  \rho^{-1/a}_r \rho ^{1/a}_r   \int_0^t |  D D_v \tilde u  (
  Z_s^r)|^2  \dd s \Big] 
 &\le c_T  \Big( \E \Big [  \rho_r  \int_0^t |  D D_v \tilde u (
  Z_s^r)|^{2a}  \dd s    \Big] \Big)^{1/a}
   \big( \E[\rho^{- a'/a}_r]   \big)^{1/a'}           \label{bo} \\
 &\le
   C_T \Big( \E \Big [  \rho_r  \int_0^t |  D D_v \tilde u (
  Z_s^r)|^{2a} \,\dd s    \Big] \Big)^{1/a} \,  ,              \nonumber
\end{align}
for any $t \in [0,T]$.  Observe that the bound on the moments of $\rho_r$ is uniform in the initial conditions $z,y\in\R^{2d}$ due to \eqref{eq expF uni}. Setting 
  $f(z) =|  D D_v \tilde u (z) |^{2a}  $ and using the Girsanov Theorem, 
assertion \eqref{stima1} follows from Lemma \ref{lem OU-Lp} if we fix $a >1$ such that $q=p/2a >2d$.

Therefore,  the process $A_t$ is well defined and $\E[A_t]<\infty$ for all $t\in[0,T]$. \eqref{At}
 and the other properties of $A_t$ follow.
\medskip

To prove the exponential integrability of the process $A_t$ we proceed in a way similar to \cite[Lemma 4.5]{FF13a}. Using the convexity of the exponential function we get
\begin{align*}
\E \Big[ e^{kA_T} \Big] \le \E \Big[ \exp \Big( k \hspace{-1mm} \int_0^T \hspace{-2mm}  \int_0^1  \hspace{-1mm} | D D_v \tilde u ( Z_s^r) |^2 \dd r  \dd s \Big) \Big] \le  \hspace{-1mm} \int_0^1  \hspace{-1mm} \E \Big[ \exp \Big( k  \hspace{-1mm} \int_0^T    \hspace{-1mm} | D D_v \tilde u ( Z_s^r) |^2 \dd s \Big) \Big] \dd r
\end{align*}
and we can continue as above (superscripts denote the probability measure used to take expectations)
\begin{align*}
\sup_{z,y} \E^\PP \Big[ e^{kA_T} \Big] &\le\sup_{z,y} \int_0^1 \E^\PP  \Big[  \rho^{-1/a}_r \rho ^{1/a}_r \exp\Big( k  \int_0^T |  D D_v \tilde u  (  Z_s^r)|^2  \dd s \Big)  \Big]  \dd r\\
& \le C_T \sup_{z,y} \int_0^1 \E^\Q  \Big[ \exp\Big( a k \int_0^T |  D D_v \tilde u  ( Z_s^r)|^2  \dd s \Big)  \Big]^{1/a}  \dd r\\
& \le C_T \sup_{z,y} \int_0^1 \E^\PP  \Big[ \exp\Big( a k \int_0^T |  D D_v \tilde u  ( L_s) |^2  \dd s \Big)  \Big]^{1/a}   \dd r \, .
\end{align*}
The last integral is finite due to Lemma \ref{Khas} because $p/2>2d$. The proof is complete.
\end{proof}

\smallskip

\begin{proposition}[It\^o formula]\label{Ito}
 If ${\varphi}:\R^{2d} \to \R$ belongs to $X_{p,s} \cap C^1_b$  
and $Z_t$ 
is a solution of (\ref{eq Z}), for any $0\le s \le t \le T$ the following It\^o 
formula holds:
\begin{equation} \label{Ito Z}
{\varphi}(Z_t) = {\varphi}(Z_s) + \int_s^t \big[  b(Z_r) \cdot D {\varphi}( Z_r)  +  
\frac{1}{2} \Delta_v {\varphi}(Z_r) \big] \dd r  + \int_s^t  D_v {\varphi}( Z_r) \, 
\dd W_r \, .
\end{equation}
\end{proposition}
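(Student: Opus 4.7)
The natural approach is mollification: set $\varphi_n = \varphi * \rho_n$, where $\rho_n$ is a standard mollifier on $\R^{2d}$. Each $\varphi_n$ is smooth with bounded derivatives of every order, so the classical Itô formula gives
\begin{equation*}
\varphi_n(Z_t) - \varphi_n(Z_s) = \int_s^t \Bigl[ V_r \cdot D_x \varphi_n(Z_r) + F(Z_r) \cdot D_v \varphi_n(Z_r) + \tfrac{1}{2} \Delta_v \varphi_n(Z_r) \Bigr] dr + \int_s^t D_v \varphi_n(Z_r) \cdot dW_r.
\end{equation*}
The task is to pass to the limit $n \to \infty$ in each term. The key tool is Lemma \ref{lem Z-Lp}: since $p > 6d > 2d$, the bound $\E \int_s^t |f_n(Z_r) - f(Z_r)|\, dr \le C \|f_n - f\|_{L^p(\R^{2d})}$ reduces almost everything to $L^p$-convergence of the integrands.

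For the diffusive and drift-type terms one uses that $\Delta_v \varphi \in L^p(\R^{2d})$ and $v \cdot D_x \varphi \in L^p(\R^{2d})$ (the latter because $\varphi \in X_{p,s}$ and $H^s_p \subset L^p$), together with $D\varphi \in L^\infty \cap L^p$ from $\varphi \in C^1_b \cap W^{1,p}$. Standard mollification gives $\Delta_v \varphi_n \to \Delta_v \varphi$ in $L^p$, which handles the second-order term. For $F \cdot D_v \varphi_n$, the bound $|D_v \varphi_n| \le \|D_v \varphi\|_\infty$ and the pointwise convergence $D_v \varphi_n \to D_v \varphi$ (by continuity of $D_v \varphi$) yield $F \cdot D_v \varphi_n \to F \cdot D_v \varphi$ in $L^p$ by dominated convergence (dominator $|F|^p \, (2\|D_v\varphi\|_\infty)^p$).

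The delicate point, which I expect to be the main obstacle, is the term $V_r \cdot D_x \varphi_n(Z_r)$, because multiplication by the coordinate $v$ does not commute with convolution. The plan is to decompose
\begin{equation*}
v \cdot D_x \varphi_n(z) = \bigl[(v \cdot D_x \varphi) * \rho_n\bigr](z) + R_n(z), \qquad R_n(z) = \int_{\R^{2d}} y_v \cdot D_x \varphi(z-y)\, \rho_n(y)\, dy,
\end{equation*}
with $y = (y_x, y_v)$. The main piece converges to $v \cdot D_x \varphi$ in $L^p$ because $v \cdot D_x \varphi \in L^p$. By Minkowski's integral inequality, $\|R_n\|_{L^p} \le \|D_x \varphi\|_{L^p} \int |y_v|\, \rho_n(y)\, dy \to 0$ as $n \to \infty$. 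Hence $v \cdot D_x \varphi_n \to v \cdot D_x \varphi$ in $L^p(\R^{2d})$, and Lemma \ref{lem Z-Lp} gives convergence of the corresponding time-integral in $L^1(\Omega)$.

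Finally, for the stochastic integral I would use Itô isometry and apply Lemma \ref{lem Z-Lp} to $|D_v \varphi_n - D_v \varphi|^2 \in L^{p/2}$ (with $p/2 > 3d > 2d$), reducing matters to $\|D_v \varphi_n - D_v \varphi\|_{L^p} \to 0$, which is standard mollification since $D_v \varphi \in L^p$. Pointwise a.s.\ convergence $\varphi_n(Z_t) \to \varphi(Z_t)$ on both sides (from uniform continuity of $\varphi$ on the range of $Z_t$, which is a.s.\ bounded) closes the argument and yields \eqref{Ito Z}.
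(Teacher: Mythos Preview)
Your proof is correct and follows essentially the same strategy as the paper: approximate $\varphi$ by smooth functions, apply the classical It\^o formula, and pass to the limit term by term, with Lemma~\ref{lem Z-Lp} controlling the second-order and stochastic-integral terms via $L^p$-convergence. The only notable difference is in the handling of the drift term $b(Z_r)\cdot D\varphi_n(Z_r)$: the paper approximates by $C^\infty_c$ functions converging in $X_{p,s}$ (so that $v\cdot D_x\varphi_\varepsilon \to v\cdot D_x\varphi$ in $L^p$ is built into the norm) and then argues pathwise dominated convergence using that $D\varphi$ is bounded and $F(Z_r),\,AZ_r \in L^1(0,T)$ a.s., whereas you keep the mollification explicit and handle the commutator $v\cdot D_x(\varphi*\rho_n) - (v\cdot D_x\varphi)*\rho_n$ by hand before invoking Lemma~\ref{lem Z-Lp}. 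Your route is slightly more self-contained (it does not rely on the density of $C^\infty_c$ in $X_{p,s}$, which the paper uses without comment), while the paper's route is shorter once that density is granted; both are fine.
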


\begin{proof} Note that we can use (iii) in Theorem \ref{gi1} to give a meaning
to the critical term $\int_s^t \Delta_v {\varphi}(Z_r)\,  \dd r$. The result then
follows approximating $\varphi$ with regular functions and using Lemma \ref{lem Z-Lp}.

Let $\varphi_\varepsilon \in C^\infty_c \to \varphi$ in  $X_{p,s}$.
$\varphi_\varepsilon$ satisfy the 
assumptions of the classical It\^o formula, which provides an analogue of 
\eqref{Ito Z} for $\varphi_\varepsilon (Z_t)$. For any fixed $t$, the random
variables $\varphi_\varepsilon (Z_t) \to \varphi(Z_t)$ $\PP$-almost surely.
Using that $D{\varphi}$ is bounded and almost surely $ F(Z_r)$ and $AZ_r$ 
are in $L^1(0,T)$ (this follows by Lemma \ref{lem Z-Lp} and Lemma 
 \ref{U moments estimate} respectively), 
the dominated convergence theorem gives the convergence of the first term in the 
Lebesgue integral. For the second term we use Lemma \ref{lem Z-Lp} with
$f=\Delta_v {\varphi_\varepsilon} - \Delta_v {\varphi}$ (recall that $p>6d$):
$$ 
\E \Big[ \int_s^t \Delta_v {\varphi_\varepsilon}(Z_r) - \Delta_v {\varphi} (Z_r)
\ \dd r \Big] 
\le C \| \Delta_v {\varphi_\varepsilon} - \Delta_v {\varphi} \|_{L^p(\R^{2d})} \to 0 \, .
$$
In the same way, one can show that  
$\E \big[ \int_s^t |  D_v {\varphi_\varepsilon} ( Z_r) -  D_v {\varphi} ( Z_r) |^2 \, \dd r \big]$
 converges to zero, which implies the convergence of the stochastic integral by the It\^o isometry.
\end{proof}

\begin{remark}\label{Ito2}
Using the boundedness of $\varphi$, it is easy to generalize the above It\^o formula \eqref{Ito Z} to $\varphi^a(Z_t)$ for any $a\ge2$.
\end{remark}

We can finally prove the well-posedness in the strong sense of the degenerate 
SDE (\ref{eq Z}). A different proof of this result in a H\"older setting is
contained in \cite{CdR}, 
but no explicit control on the dependence on the initial data is given there, so 
that a flow cannot be constructed. See also the more recent results of \cite{WaZh1}. 
 We here present a different, and in 
some sense more constructive, proof. This approach, based on ideas introduced in 
\cite{FGP10}, \cite{KR05}, \cite{FF13a}, will even allow us to obtain some 
regularity results on certain derivatives of the solution. 
We will use Theorem \ref{PDE-1!} from Section \ref{sec: regularity Bessel}, which provides 
the regularity $X_{p,s} \cap C^1_b(\R^{2d})$ of solutions of \eqref{PDE-lambda}.

\begin{theorem} \label{strong 1!}
Equation (\ref{eq Z}) is well posed in the strong sense.
\end{theorem}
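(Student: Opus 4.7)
The plan is to follow the Zvonkin/It\^o--Tanaka strategy adapted to the degenerate kinetic setting. Since Theorem \ref{gi1} already provides weak existence (and weak uniqueness) of solutions to \eqref{eq Z}, by the Yamada--Watanabe principle it is enough to establish pathwise uniqueness; strong existence and strong uniqueness will then follow. The idea is to use the PDE \eqref{PDE-lambda} to transform \eqref{eq Z} into an equation in which the singular drift $B$ has been absorbed.

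Pick $\lambda$ large enough so that Theorem \ref{PDE-1!} yields a componentwise solution $U\in X_{p,s}\cap C^1_b(\R^{2d};\R^{2d})$ of $\lambda U-\calL U=B$ with $\|DU\|_\infty\le 1/2$; by \eqref{lin2} this is achieved for $\lambda\ge\lambda_\star$ sufficiently large. Then $\vf(z)=z+U(z)$ is a bi-Lipschitz $C^1$ homeomorphism of $\R^{2d}$, with $\vf^{-1}$ also Lipschitz. Apply the It\^o formula of Proposition \ref{Ito} componentwise to $U(Z_t)$ along a solution $Z$ of \eqref{eq Z} (this is legitimate since each component of $U$ lies in $X_{p,s}\cap C_b^1$); using $\calL U=\lambda U-B$ to cancel the singular term $B(Z_r)\,\dd r$, one finds that $Y_t:=\vf(Z_t)$ solves
\begin{equation*}
\dd Y_t=\widetilde b(Y_t)\,\dd t+\widetilde\sigma(Y_t)\,\dd W_t,\qquad Y_0=\vf(z_0),
\end{equation*}
where $\widetilde b(y)=A\vf^{-1}(y)+\lambda U(\vf^{-1}(y))$ is globally Lipschitz (since $A$ is linear and $U\in C^1_b$) and $\widetilde\sigma(y)=R+D_vU(\vf^{-1}(y))$ is bounded but only weakly differentiable with $D\widetilde\sigma\in L^p$ by \eqref{es41}. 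A direct verification shows that $Z$ solves \eqref{eq Z} if and only if $Y=\vf(Z)$ solves this transformed equation, so the problem reduces to pathwise uniqueness for the transformed SDE.

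For pathwise uniqueness, let $Y^1,Y^2$ be two solutions on the same probability space starting from the same initial condition, and set $Z^i=\vf^{-1}(Y^i)$. The difference process $\Delta_t=Y^1_t-Y^2_t$ satisfies an SDE whose drift increment is controlled by $\|\widetilde b\|_{\mathrm{Lip}}|\Delta_t|$ and whose diffusion increment is
\begin{equation*}
\|\widetilde\sigma(Y^1_t)-\widetilde\sigma(Y^2_t)\|_{HS}^2=\|D_vU(Z^1_t)-D_vU(Z^2_t)\|_{HS}^2=\|[DU(Z^1_t)-DU(Z^2_t)]R\|_{HS}^2.
\end{equation*}
This is exactly the quantity appearing in Lemma \ref{lemma exp At}: writing this increment as $|Z^1_t-Z^2_t|^2\,\dd A_t$ via \eqref{At}, and using that $\vf^{-1}$ is Lipschitz so $|Z^1_t-Z^2_t|\le C|\Delta_t|$, one gets
\begin{equation*}
\|\widetilde\sigma(Y^1_t)-\widetilde\sigma(Y^2_t)\|_{HS}^2\le C|\Delta_t|^2\,\dot A_t\,\dd t\quad\text{in the integrated sense.}
\end{equation*}
Apply It\^o to $|\Delta_t|^2 e^{-K(t+A_t)}$ for a sufficiently large constant $K$ (this requires again Proposition \ref{Ito} applied to $Y^i$, which are solutions of an SDE with bounded coefficients): choosing $K$ to dominate both the Lipschitz constant of $\widetilde b$ and the quadratic-variation contribution, the drift of $|\Delta_t|^2 e^{-K(t+A_t)}$ becomes nonpositive, so taking expectations (finite by the exponential integrability \eqref{exp At} of $A_T$) gives $\E[|\Delta_t|^2 e^{-K(t+A_t)}]\le 0$, whence $Y^1\equiv Y^2$ and therefore $Z^1\equiv Z^2$.

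The main obstacle is the last step: because the noise is degenerate, the transformed diffusion $\widetilde\sigma$ is only in a Sobolev class, and one cannot invoke classical Lipschitz arguments. The key technical input is Lemma \ref{lemma exp At}, whose exponential integrability \eqref{exp At}---proved via Girsanov plus Khas'minskii---is tailored precisely so that the weight $e^{-KA_t}$ is well-behaved under expectation. Combined with the good regularity of $U$ from Theorem \ref{PDE-1!} (in particular the fact that $D_vU\in W^{1,p}$ and that $DU$ can be made uniformly small for $\lambda$ large), this closes the argument. Pathwise uniqueness for \eqref{eq Z} together with the weak existence of Theorem \ref{gi1} then yields, by Yamada--Watanabe, strong existence and strong uniqueness.
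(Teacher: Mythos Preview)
Your proof is correct and follows essentially the same Zvonkin-type strategy as the paper: reduce to pathwise uniqueness via Yamada--Watanabe, use the solution $U$ of \eqref{PDE-lambda} with $\lambda$ large to define the bi-Lipschitz transformation $z\mapsto z+U(z)$, apply the It\^o formula of Proposition~\ref{Ito}, and control the non-Lipschitz diffusion increment via the process $A_t$ of Lemma~\ref{lemma exp At}. The only differences are cosmetic: the paper works directly with $\gamma(Z_t)-\gamma(Y_t)$ rather than passing to the inverse map $\vf^{-1}$, uses a general exponent $a\ge 2$ (needed later for Corollary~\ref{cor Z-Y}) instead of $a=2$, and closes with Gr\"onwall rather than absorbing the Lipschitz constant into the exponential weight $e^{-K(t+A_t)}$---but these are interchangeable variants of the same computation.
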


\begin{proof}
Since we have weak well posedness by (i) of Theorem \ref{gi1}, the 
Yamada-Watanabe principle provides strong existence as soon as strong uniqueness 
holds. Therefore, we only need to prove strong uniqueness. This can be done by 
using an appropriate change of variables which transforms equation (\ref{eq Z}) 
into an equation with more regular coefficients. This method was first 
introduced in \cite{FGP10}, where it is used to prove strong uniqueness for a 
non degenerate SDE with a H\"older drift coefficient.

Here, the SDE is degenerate and we only need to regularize the 
second component of the drift coefficient, $F(\cdot)$, which is not Lipschitz 
continuous. We therefore introduce the auxiliary PDE (\ref{PDE-lambda}) with 
$\lambda$ large enough such that  
\begin{equation}\label{stima nablaU} 
\| U_{\lambda} \|_{L^\infty(\R^{2d})} + \| D U_{\lambda} \|_{L^\infty(\R^{2d})} < 1/2
\end{equation}
 holds (see  \eqref{lin2}).  
 In the following we will always use this value of $\lambda$ and to ease 
notation we shall drop the subscript for the solution $U_\lambda$ of 
(\ref{PDE-lambda}), writing $U_{\lambda} = U.$

Let $Z_t$ be one solution to \eqref{eq Z} starting from $z \in \R^{2d}$. Since
$$
 Z_t  = z  + \int_0^t B (Z_s) \, \dd s  +  \int_0^t   AZ_s  \, \dd s  +{R \,} 
W_t \, ,
$$
and $U\in X_{p,s} \cap C^1_b$ (see Theorem \ref{PDE-1!}), by the It\^o formula of Proposition \ref{Ito}
we have
\begin{align*}
U(Z_t) &= U(z)  + \int_0^t  DU (Z_s){R \,}
 \, \dd W_s   + \int_0^t {\mathcal L } U(Z_s )\, \dd s  \\
&= U(z)  + \int_0^t  DU (Z_s){R \,}
 \dd W_s   + \lambda \int_0^t U(Z_s) \,\dd s
  -  \int_0^t B(Z_s) \, \dd s \, .
\end{align*}
Using the SDE to rewrite the last term we find  
\begin{align*}
U(Z_t) =& \, U(z)  + \int_0^t  DU (Z_s){R \,}  \dd W_s   + \lambda \int_0^t 
U(Z_s)  \,\dd s - Z_t  + z  +  \int_0^t A Z_s \, \dd s +{R \,} W_t
\end{align*}
and so
\begin{align} \label{forse}
\hspace{-2mm} Z_t =  U(z)- U(Z_t)   + \int_0^t \hspace{-1mm} DU (Z_s){R \,} \dd W_s  + \lambda 
\int_0^t \hspace{-1mm} U(Z_s) \dd s  + z  + \int_0^t \hspace{-1mm} A Z_s \dd s  +{R \,} W_t  \,  .
\end{align}
Let now $Y_t$ be another solution starting from $y\in\R^{2d}$ and let  
\begin{equation}
\label{def phi}
 \gamma (x) = x + U(x), \;\; x \in \R^{2d} \, .
\end{equation}
We have $ \gamma(z) - \gamma (y) =   z-y + U(z) - U(y)$, and so   $| z-y | \le |U(z) 
- U(y) |+  |\gamma (z) - \gamma(y)|$.  Since we have chosen $\lambda$ such that $\|  
DU\|_{L^\infty(\R^{2d})} <1/2$, there exist finite constants $C,c >0$ such that
 \begin{equation} \label{dif}
c  |\gamma (z) - \gamma(y)| \le  |z-y| \le C |\gamma (z) - \gamma(y)|, \qquad \forall  z, y \in 
\R^{2d}. 
 \end{equation}
 We find
 \begin{equation} \label{eq phi(Z)}
 \dd \gamma(Z_t) = \big( \lambda U(Z_t)  + A Z_t \big) \, \dd t +   \big( DU (Z_t) + \mathbb{I} \big) {R \,} \cdot \dd W_t
 \end{equation}
 and
 \begin{align} \label{f5}
  \gamma (Z_t) - \gamma(Y_t) = & \,  z-y + U(z) - U(y) + \int_0^t  \big[ DU (Z_s)- 
DU (Y_s) \big] R \cdot \dd W_s \\
  & + \lambda \int_0^t  \big[ U(Z_s) - U(Y_s) \big] \dd s  + \int_0^t  A(Z_s - 
Y_s) \, \dd s \, .    \nonumber
\end{align}
 For $a\ge2$, let us apply It\^o formula to $\big| \gamma (Z_t) - \gamma(Y_t) \big|^a = \big( \sum_{i=1}^{2d} \big[\gamma (Z_t) - \gamma(Y_t)\big]_i^2 \big)^{a/2}$:
\begin{align*}
 \dd \big[ \big| \gamma (Z_t) - \gamma(Y_t) \big|^a \big]  & =  a \big| \gamma (Z_t) - 
\gamma(Y_t) \big|^{a-2} \big( \gamma (Z_t) - \gamma(Y_t) \big)
 \cdot \dd \big( \gamma (Z_t) - \gamma(Y_t) \big) \\
 & \hspace{-20mm} + \frac{a}{2} \big| \gamma (Z_t) - \gamma(Y_t) \big|^{a-4} \sum_{i,j=1}^{2d} \sum_{k=1}^{d}  \Big\{  
 (a-2) \big( \gamma (Z_t) - \gamma(Y_t) \big)_i  \big( \gamma (Z_t) - \gamma(Y_t) \big)_j \\
 & \hspace{31mm} + \delta_{i,j} \big| \gamma (Z_t) - \gamma(Y_t) \big|^2 \Big\} \\
 & \hspace{5mm} \times \Big[\big( DU (Z_t) - DU (Y_t) \big)R \Big]_{k,i}\Big[ \big( DU (Z_t) - DU (Y_t)  \big)R \Big]_{k,j}    \dd t\\
& \hspace{-1cm} \le a \big| \gamma (Z_t) - \gamma(Y_t) \big|^{a-2} \Big\{  \big( \gamma (Z_t) - \gamma(Y_t) \big) \cdot  \big[ DU (Z_t) - DU (Y_t) 
\big]R \cdot \dd W_t \\
& \quad +  \big( \gamma (Z_t) - \gamma(Y_t) \big) \cdot  \Big( \lambda \big[U(Z_t) - 
U(Y_t) \big] +  A(Z_t - Y_t)   \Big)  \, \dd t\\
& \quad +   C_{a,d}  \Big\| 
\big[ DU (Z_t)- DU (Y_t) \big]R \Big\|_{HS}^2 \dd t \Big\} \, .
\end{align*} 
Note that $Z_t$ has finite moments of all orders, and $U$ is bounded, so that 
also the process $\gamma(Z_t)$ has finite moments of all orders. Using also that 
$DU$ is a bounded function, we deduce that the stochastic integral 
is a martingale $M_t$:
$$
 M_t = \int_0^t a \big| \gamma (Z_s) - \gamma(Y_s) \big|^{a-2}   \big( \gamma (Z_s) - \gamma(Y_s) \big) \cdot  \big[ DU (Z_s) - DU (Y_s) 
\big]R \cdot \dd W_s
$$
As in \cite{KR05} and \cite{FF11} we now  consider  the following process 
\begin{align}\label{eq B_t}
 B_t &=  \int_0^t 1_{ \{ Z_s  \not = Y_s \}}
 \; \frac{ \Big\| [DU (Z_s)- DU (Y_s)]{R \,}\Big \|^2_{HS}
 } {|\gamma(Z_s) - \gamma(Y_s)|^2} \ \dd s \le C^2 A_t \, ,
\end{align}
where we have used the equivalence \eqref{dif} between $|Z_t-Y_t|$ and $|\gamma(Z_t) - \gamma(Y_T)|$ and $A_t$ is the process defined by \eqref{def At} and studied in Lemma \ref{lemma exp At}. Just as the process $A_t$, also $B_t$ has finite moments, and even its exponential has finite moments. With these notations at hand we can rewrite
\begin{align*}
 \dd \big[ \big| \gamma (Z_t) &- \gamma(Y_t) \big|^a \big]  \\
 & \le 
  a \big| \gamma (Z_t) - \gamma(Y_t) \big|^{a-2} \big(\gamma (Z_t) - \gamma(Y_t)\big) \cdot  \Big( \lambda \big[U(Z_t) - 
U(Y_t) \big]  +  A(Z_t - Y_t)   \Big) \, \dd t  \\
& \quad +  \dd M_t +  C_{a,d} \big| \gamma (Z_t) - \gamma(Y_t) \big|^{a} \,   \dd B_t \, .
\end{align*} 
 Again by It\^o formula   we have
\begin{align} \label{Ito 1!}
 \dd \Big( e^{-C_{a,d} B_t} \big| \gamma (Z_t) - \gamma(Y_t) \big|^a \Big) & = - C_{a,d} \, e^{-C_{a,d}  B_t} \big| \gamma (Z_t) - \gamma(Y_t) \big|^a \, \dd B_t \\
 + \, e^{-C_{a,d}  B_t} \Big\{     a \big| \gamma (Z_t) -  \gamma(Y_t) \big|^{a-2} & \big( \gamma (Z_t) -  \gamma(Y_t) \big) \cdot
   \Big( \lambda \big[U(Z_t) - U(Y_t) \big]  +  A(Z_t - Y_t)  \Big)  \, \dd t  \nonumber \\
& \quad + \dd M_t  + C_{a,d}  \big| \gamma (Z_t) - \gamma(Y_t) \big|^{a} \,  \dd B_t \Big\}\, .  \nonumber
\end{align} 
The term $e^{-C_{a,d}  B_t}\dd M_t$ is still the differential of a zero-mean martingale. Integrating and taking the expected value we find
\begin{align*}
\E  \Big[ e^{-C_{a,d}  B_t} \big| \gamma (Z_t) - \gamma(Y_t) \big|^a \Big]  =  \big| \gamma(z) - \gamma(y) \big|^a  &+ \E \Big[  \int_0^t e^{-C_{a,d}  B_s}  a \big| \gamma (Z_s) - \gamma(Y_s) \big|^{a-2} \\
&\hspace{-30mm} \times \big( \gamma (Z_s) - \gamma(Y_s) \big)  \cdot  \Big( \lambda \big[U(Z_s) - 
U(Y_s) \big]  +  A(Z_s- Y_s)    \Big) \, \dd s  \Big]   .
\end{align*} 
Using again the equivalence \eqref{dif} between $|Z_t-Y_t|$ and $|\gamma(Z_t) - \gamma(Y_T)|$ and the fact that $U$ is Lipschitz continuous, this finally provides the following estimate:
\begin{equation*}
\E \Big[ e^{-C_{a,d}  B_t} \big| Z_t - Y_t \big|^a  \Big]  \le C \Big\{ |z-y|^a +  
\int_0^t  \E \Big[ e^{- C_{a,d}  B_s} \big| Z_s - Y_s \big|^a  \Big] \, \dd s \Big\} \, .
\end{equation*}
By Gr\"onwall's inequality, there exists a finite constant $C'$ 
such that
\begin{equation}\label{Z1-Z2-a}
\E \Big[ e^{-C_{a,d}  B_t} \big| Z_t - Y_t \big|^a \Big] \le  C'  \big|z-y \big|^a \, .
\end{equation}
Using that $B_t$ is increasing and a.s. $B_T <\infty$, taking $z=y$ 
we get for any fixed $t\in[0,T]$ that $\PP \big( Z_t \neq Y_t \big) =0$. 
Strong uniqueness follows by the continuity of trajectories. This completes 
the proof.
 \end{proof}

\begin{corollary}\label{cor Z-Y}
Using the finite moments of the exponential of the process $B_t$, we can also 
prove that for any $a\ge2$,
\begin{equation}\label{Z1-Z2}
\E \Big[ \big| Z_t - Y_t \big|^a \Big] \le  C  \big|z-y \big|^a \, .
\end{equation}
\end{corollary}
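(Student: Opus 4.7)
The plan is to extract \eqref{Z1-Z2} from the weighted estimate \eqref{Z1-Z2-a} by applying the Cauchy-Schwarz inequality, balancing the exponential weight $e^{-C_{a,d} B_t}$ against the exponential integrability of $B_t$ guaranteed by Lemma \ref{lemma exp At}.

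More precisely, fix $a \ge 2$ and write
\begin{equation*}
|Z_t - Y_t|^{a} = \Big( e^{-C_{2a,d} B_t} |Z_t - Y_t|^{2a} \Big)^{1/2} \cdot e^{C_{2a,d} B_t / 2}.
\end{equation*}
By Cauchy-Schwarz,
\begin{equation*}
\E\big[|Z_t - Y_t|^{a}\big] \le \E\big[e^{-C_{2a,d} B_t} |Z_t - Y_t|^{2a}\big]^{1/2} \cdot \E\big[e^{C_{2a,d} B_t}\big]^{1/2}.
\end{equation*}
The first factor is controlled by \eqref{Z1-Z2-a} applied with exponent $2a$ in place of $a$, giving
\begin{equation*}
\E\big[e^{-C_{2a,d} B_t} |Z_t - Y_t|^{2a}\big]^{1/2} \le C'^{1/2} \, |z-y|^{a}.
\end{equation*}

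For the second factor, recall from \eqref{eq B_t} that $B_t \le C^2 A_t$, so Lemma \ref{lemma exp At} yields
\begin{equation*}
\sup_{z,y \in \R^{2d}} \E\big[e^{C_{2a,d} B_t}\big] \le \sup_{z,y \in \R^{2d}} \E\big[e^{C^2 C_{2a,d} A_T}\big] < \infty.
\end{equation*}
Combining these two bounds yields \eqref{Z1-Z2} with a finite constant $C$ independent of $z,y$. There is no real obstacle: the argument is a direct consequence of the exponential integrability of $A_t$ (and hence $B_t$) established in Lemma \ref{lemma exp At}, which is exactly what the elaborate Girsanov-based analysis of that lemma was designed to provide.
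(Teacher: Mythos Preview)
Your proof is correct and is essentially the same as the paper's: both multiply and divide by an exponential weight and apply Cauchy--Schwarz, using \eqref{Z1-Z2-a} with exponent $2a$ for one factor and the exponential integrability of $B_t$ (via $B_t \le C^2 A_t$ and Lemma \ref{lemma exp At}) for the other. The only cosmetic difference is that the paper writes the splitting as $e^{cB_t}e^{-cB_t}|Z_t-Y_t|^a$ before applying H\"older, whereas you factor $|Z_t-Y_t|^a$ directly.
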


\begin{proof} Using H\"older's inequality and for an appropriate constant $c$, we have
\begin{align*}
\E \Big[    \big| Z_t - Y_t \big|^a \Big] &= \E \Big[  e^{c B_t} e^{-c B_t}  
\big| Z_t - Y_t \big|^a \Big] \\
& \le C \Big( \E \Big[ e^{-2c B_t} \big| Z_t - Y_t \big|^{2a} \Big] \Big)^{1/2} 
\le  C  \big|z-y \big|^a \, .
\end{align*}  \end{proof}

 \subsection{Stochastic Flow }
Many of the proofs of results contained in this section follow closely the proofs of \cite[Chapter II.2]{Ku84} or \cite[Chapter 4.5]{Ku90}. To avoid reporting lengthy computations from those references, we will often content ourselves with describing how to adapt the classical proofs to our setting.

 We stress that the main ingredient is the quantitative control on the continuous dependence on the initial data of solutions of the SDE \eqref{eq Z}, which was already obtained in Corollary \ref{cor Z-Y}.

We will repeatedly use the transformation $\gamma$ introduced in 
\eqref{def phi} and the It\^o formula \eqref{eq phi(Z)}, which we rewrite as
$$ \dd \gamma(Z_t^z) =\widetilde b(Z_t) \, \dd t + \widetilde \sigma(Z_t) \cdot \dd W_t \, ,$$
where $\widetilde b(z) =  \lambda U(z) + A z$ is Lipschitz continuous and $ \widetilde \sigma (z) = \big(D U (z)  + \mathbb{I}_{2d} \big) R$ is bounded.

\subsubsection{Continuity}

\begin{lemma}\label{lemma 2.3 Kunita}
Let $a$ be any real number.  
Then there is a positive constant $C_{a}$ independent of  $t\in[0,T]$ and $z\in\R^{2d}$ such that 
\begin{equation}
\E\Big[ \Big(1 + \big| Z_{t}^z\big|^2 \Big)^a \Big] \le C_{a,d} \big(1 + |z|^2 \big)^a.
\end{equation}
\end{lemma}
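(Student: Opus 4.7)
The plan is to apply It\^o's formula to the function $y \mapsto (1+|y|^2)^a$ evaluated at $\gamma(Z_t^z)$, where $\gamma(z) = z + U(z)$ is the transformation introduced in \eqref{def phi} in the proof of Theorem \ref{strong 1!}. Since $U$ is bounded (by \eqref{stima nablaU}), the triangle inequality yields
$$
c_1 (1+|z|^2) \;\le\; 1+|\gamma(z)|^2 \;\le\; c_2 (1+|z|^2), \qquad z \in \R^{2d},
$$
for suitable constants $c_1,c_2>0$, so it will be enough to bound $\E\bigl[(1+|\gamma(Z_t^z)|^2)^a\bigr]$ by $C (1+|\gamma(z)|^2)^a$.

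The decisive observation is that $\gamma(Z_t)$ is a genuine It\^o process: by \eqref{eq phi(Z)}
$$
d\gamma(Z_t) = \tilde b(Z_t)\,dt + \tilde\sigma(Z_t)\,dW_t,\quad \tilde b(z) = \lambda U(z) + Az,\quad \tilde\sigma(z) = (DU(z) + \mathbb{I}_{2d})R,
$$
with $\tilde b$ of linear growth (since $U$ is bounded and $A$ is linear, $|\tilde b(z)| \le C(1+|z|)$) and $\tilde\sigma$ uniformly bounded (by \eqref{stima nablaU}). I would then apply the classical It\^o formula to the $C^\infty$ function $f(y) = (1+|y|^2)^a$ composed with the semimartingale $\gamma(Z_t)$; using
$$
\nabla f(y) = 2a\,(1+|y|^2)^{a-1} y,\qquad \nabla^2 f(y) = 2a\,(1+|y|^2)^{a-1}\mathbb{I} + 4a(a-1)\,(1+|y|^2)^{a-2}\,y\otimes y,
$$
the equivalence $|\gamma(Z_t)||\tilde b(Z_t)| \le C(1+|\gamma(Z_t)|^2)$, the boundedness of $\tilde\sigma$, and the elementary inequality $(1+|y|^2)^{a-1}\le (1+|y|^2)^a$ (valid because $1+|y|^2\ge1$), one bounds the drift part of $df(\gamma(Z_t))$ by $C_a (1+|\gamma(Z_t)|^2)^a\,dt$, with $C_a$ depending only on $a,d$ and on the constants $\|U\|_\infty,\|DU\|_\infty,\lambda$ fixed in the previous subsection.

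Introducing the localizing stopping times $\tau_n = \inf\{t : |Z_t|\ge n\}\wedge T$ ensures that the stochastic integral is a true martingale up to $t\wedge\tau_n$; taking expectations and applying Gronwall's lemma gives
$$
\E\bigl[(1+|\gamma(Z_{t\wedge\tau_n})|^2)^a\bigr] \;\le\; e^{C_a T}(1+|\gamma(z)|^2)^a,
$$
uniformly in $n$ and in $t\in[0,T]$. Sending $n\to\infty$, Fatou's lemma (applicable for any real $a$ since $f\ge 0$) yields the same bound for $\E[(1+|\gamma(Z_t)|^2)^a]$, and combining this with the equivalence from the first step finishes the proof.

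I do not anticipate a serious obstacle: this is essentially an adaptation of Kunita's classical Lemma II.2.3 once the transformed SDE for $\gamma(Z_t)$ with Lipschitz drift and bounded diffusion is in hand. The only point worth being careful about is the sign of $a$, in particular that Fatou's lemma gives the correct one-sided inequality also when $a<0$, and that the simple inequality $(1+|y|^2)^{a-1}\le(1+|y|^2)^a$ lets one absorb all the lower-order terms from $\nabla^2 f$ into a single Gronwall-compatible bound.
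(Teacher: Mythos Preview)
Your proposal is correct and follows essentially the same approach as the paper: both reduce to $\gamma(Z_t)$ via the equivalence $1+|z|^2 \asymp 1+|\gamma(z)|^2$, apply It\^o's formula to $(1+|\cdot|^2)^a$ using the linear-growth drift $\tilde b$ and bounded diffusion $\tilde\sigma$ of \eqref{eq phi(Z)}, bound the drift terms by a multiple of $(1+|\gamma(Z_s)|^2)^a$, and conclude by Gronwall. The only minor difference is that the paper dispenses with localization by invoking the finite moments already obtained in Lemma~\ref{U moments estimate} to justify directly that the stochastic integral is a true martingale, whereas you localize with $\tau_n$ and pass to the limit via Fatou; both are standard and your version is slightly more self-contained.
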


\begin{proof}
Using the boundedness of the solution $U$ of the PDE \eqref{PDE-lambda} (see \eqref{stima nablaU}) one can show the equivalence 
\begin{equation*}\label{equivalence Z-gamma}
c (1+|z|^2) \le 1+ |\gamma (z)|^2 \le C(1+|z|^2)\, .
\end{equation*}
Set $\gamma_t = \gamma(Z_t^z)$. Then, it is enough to prove that $\E\big[ \big(1 + | \gamma_{t}|^2 \big)^a \big] \le C_{a,d} \big(1 + |\gamma(z)|^2 \big)^a$. Set $f(z):=(1 + |z|^2)$. The idea is to apply It\^o formula to $g(\gamma_{t})$, where $g(z)=f^a(z)$. Since
\begin{align*}
\frac{\partial g}{\partial z_i} (z) &= 2a f^{a-1}(z) z_i \,, \qquad
\frac{\partial^2 g}{\partial z_i\, \partial z_j} (z) = 4a(a-1)f^{a-2}(z) z_i z_j + 2a f^{a-1}(z) \delta_{i,j} \,,
\end{align*}
we see that
\begin{align}\label{F(Y)}
g(\gamma_{t}) - g(\gamma(z)) =& \ 2a \int_0^t f^{a-1}(\gamma_{s})\, \gamma_s \cdot  \widetilde\sigma(\gamma_s) \cdot \dd W_s + 2a \int_0^t f^{a-1}(\gamma_s) \, \gamma_s \cdot \widetilde b(\gamma_s) \, \dd s \\
&+ \sum_{i,j,k=1}^{2d} \int_0^t 2a f^{a-2}(\gamma_s) \Big[ (a-1) \gamma_s^i \gamma_s^j + \delta_{i,j} f(\gamma_s) \Big]\widetilde\sigma^{k,i}(\gamma_s) \widetilde\sigma^{k,j}(\gamma_s)\,  \dd s. \nonumber  
\end{align}
Here we have used the relation $
\dd \langle \gamma_t,\gamma_t \rangle = \widetilde\sigma(\gamma_t)\, \widetilde\sigma^t(\gamma_t)\, \dd t \, . 
$
Since $\gamma_t$ has finite moments, the first term on the right hand side of (\ref{F(Y)}) is a martingale with zero mean. Note that $f(z)\ge1$, so that $f^{a-1}\le f^a $ and $|z|\le f^{1/2}(z)$. Moreover, since $\widetilde\sigma$ is bounded and $\widetilde b$ is Lipschitz continuous, $|\widetilde b(z)|\le C(1+|z|)\le C f^{1/2}(z)$. Using all this, we can see that the second and third term on the right hand side of (\ref{F(Y)}) are dominated by a constant times $\int_0^t g(\gamma_s)\, \dd s$. Therefore, taking expectations in (\ref{F(Y)}) we have
\begin{equation*}
\E\big[ g(\gamma_t) \big] - g(\gamma_0) \le C_{a,d} \int_0^t \E\big[ g(\gamma_s)\big] \dd s \, ,
\end{equation*}
and the result follows by Gr\"onwall's lemma. \end{proof}
\smallskip

\begin{proposition}\label{Flow-continuity}
Let $Z_t^z$ be the unique strong solution to the SDE \eqref{eq Z} given by Theorem \ref{strong 1!} and starting from the point $z\in \R^{2d}$. For any $a>2$, $s,t\in[0,T]$ and $z,y \in \R^{2d} $ we have
\begin{equation} \label{stima flow-continuity}
\E\Big[ \big| Z_t^z - Z_s^y \big|^a \Big] \le C_{a,d,\lambda,T} \Big\{ |z-y|^a + \big( 1+ |z|^a+|y|^a \big) |t-s|^{a/2} \Big\} \, .
\end{equation}
\end{proposition}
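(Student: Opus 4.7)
The plan is to decompose the increment via the triangle inequality,
$$ \E\big[|Z_t^z - Z_s^y|^a\big] \le C_a \Big( \E\big[|Z_t^z - Z_t^y|^a\big] + \E\big[|Z_t^y - Z_s^y|^a\big] \Big),$$
and handle the two pieces separately. The first (spatial) piece is already given by Corollary \ref{cor Z-Y}, which yields $\E\big[|Z_t^z - Z_t^y|^a\big] \le C|z-y|^a$. So the only work is in the temporal increment, for which I would use the regularizing change of variables $\gamma(z) = z + U(z)$ from \eqref{def phi}.

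For the temporal part, assume without loss of generality that $s \le t$. The key point is that while $B$ is only $L^p$, after the transformation $\gamma$ we have the SDE
$$ d\gamma(Z_r) = \widetilde b(Z_r)\, dr + \widetilde\sigma(Z_r)\cdot dW_r, \qquad \widetilde b(z) = \lambda U(z) + Az, \quad \widetilde\sigma(z) = (DU(z)+\mathbb{I})R, $$
where $\widetilde b$ has linear growth (since $U \in C^1_b$) and $\widetilde\sigma$ is bounded. Integrating from $s$ to $t$ and applying the Burkholder–Davis–Gundy inequality to the stochastic integral plus Jensen's inequality to the Lebesgue integral, I get
$$ \E\big[|\gamma(Z_t^y) - \gamma(Z_s^y)|^a\big] \le C_a |t-s|^{a-1} \hspace{-1mm}\int_s^t \hspace{-1mm}\E\big[|\widetilde b(Z_r^y)|^a\big]\, dr + C_a |t-s|^{a/2-1}\hspace{-1mm} \int_s^t \hspace{-1mm}\E\big[|\widetilde\sigma(Z_r^y)|^a\big]\, dr. $$
The second term is bounded by $C\|\widetilde\sigma\|_\infty^a |t-s|^{a/2}$. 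For the first term, the linear growth of $\widetilde b$ plus Lemma \ref{lemma 2.3 Kunita} gives $\E[|\widetilde b(Z_r^y)|^a] \le C(1+|y|^a)$, yielding an upper bound of $C(1+|y|^a)|t-s|^a$. Since $|t-s| \le T$, we have $|t-s|^a \le T^{a/2}|t-s|^{a/2}$, so both contributions collapse to $C(1+|y|^a)|t-s|^{a/2}$. Finally, the bi-Lipschitz equivalence \eqref{dif} transports this bound back to $Z_t^y - Z_s^y$.

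Putting the two pieces together and symmetrizing $|y|^a \le C(|z|^a + |z-y|^a)$ (absorbing $|z-y|^a$ into the first term if $|z-y|\le 1$, otherwise using that $|z-y|^a \cdot |t-s|^{a/2}$ is dominated by $T^{a/2}|z-y|^a$) produces the desired bound with the symmetric factor $1+|z|^a+|y|^a$.

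I do not anticipate any serious obstacle: the transformation $\gamma$ has already done the hard analytic work (trading the rough $B$ for the Lipschitz $\widetilde b$ and bounded $\widetilde\sigma$), and Lemma \ref{lemma 2.3 Kunita} supplies the required moment bounds on $Z_r^y$. The computation is essentially the classical Kolmogorov-type time-regularity estimate for an SDE with linear-growth drift and bounded diffusion, applied to $\gamma(Z_t)$ rather than $Z_t$ itself.
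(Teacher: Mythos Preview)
Your proposal is correct and follows essentially the same route as the paper: the same triangle-inequality decomposition into a spatial increment (handled by Corollary \ref{cor Z-Y}) and a temporal increment (handled via the $\gamma$-transformation \eqref{eq phi(Z)}, Burkholder's inequality for the bounded diffusion $\widetilde\sigma$, and Lemma \ref{lemma 2.3 Kunita} for the linear-growth drift $\widetilde b$), followed by the bi-Lipschitz equivalence \eqref{dif}. The only remark is that your final ``symmetrizing'' paragraph is unnecessary: once you have $C(1+|y|^a)|t-s|^{a/2}$ for the temporal piece, this is trivially bounded by $C(1+|z|^a+|y|^a)|t-s|^{a/2}$, so nothing further is needed.
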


\begin{proof}
Assume $t>s$. It suffice to show that
\begin{align}
\E\Big[  \big| Z_s^z - Z_s^y \big|^a \Big] &\le C  |z-y|^a      \label{st1} \\
\E\Big[  \big| Z_t^z - Z_s^z \big|^a \Big] &\le C  \big( 1+ |z|^a \big)  |t-s|^{a/2} \, .  \label{st2}
\end{align}
The first inequality was obtained in Corollary \ref{cor Z-Y}. To prove the second inequality we use the equivalence \eqref{dif} between $Z_t$ and $\gamma(Z_t)$. We use the It\^o formula  \eqref{eq phi(Z)} for $\gamma(Z_t)$ and $\gamma(Z_s)$: we can control the differences of the first and last term using the fact that $U$ and $DU$ are bounded, together with Burkholder's inequality 
\begin{align*}
\E\Big[ \big| Z_t^z & - Z_s^z \big|^a \Big]  \le C \E \Big[ \big| \gamma(Z_t^z) - \gamma(Z_s^z) \big|^a \Big] \\
 &\le C_{a,d} \Big\{  \Big| \int_s^t \Big[  \| \lambda U \|_{\infty}^2  \dd r \Big|^{a/2} \hspace{-1mm} +  \E \Big[ \Big|  \int_s^t  A Z_r^z \, \dd r \Big|^{a} \Big]  + \|DU\|_{\infty} \, \E \Big[\big| R (W_t - W_s) \big|^a  \Big] \Big\} 
 \end{align*} 
and for the linear part we use H\"older's inequality and Lemma \ref{lemma 2.3 Kunita}:
$$     \E \Big[ \Big|  \int_s^t  A Z_r^z \, \dd r \Big|^{a} \Big]  \le (t-s)^{a/2} \,  \E \Big[   \int_s^t  |A Z_r^z|^{a} \, \dd r \Big]  \le C (t-s)^{a/2} \int_s^t (1+|z^2|)^{a/2} \, \dd r  \, .$$
\end{proof}

Applying Kolmogorov's regularity theorem (see \cite[Theorem I.10.3]{Ku84}), we immediately obtain the following
\begin{theorem}\label{teo continuity}
The family of random variables $(Z_t^z)$, $t \in [0,T]$, $z \in \R^d,$ admits a modification which is locally $\alpha$-H\"older continuous in $z$ for any $\alpha<1$ and $\beta$-H\"older continuous in $t$ for any $\beta<1/2$. 
\end{theorem}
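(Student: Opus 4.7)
The plan is to deduce Theorem \ref{teo continuity} directly from the two-parameter moment estimate of Proposition \ref{Flow-continuity} by invoking the multi-parameter Kolmogorov regularity theorem (\cite[Theorem I.10.3]{Ku84}) on the map $(t,z) \mapsto Z_t^z$. Recall that the estimate proved there is
\begin{equation*}
\E\bigl[|Z_t^z - Z_s^y|^a\bigr] \;\le\; C_{a,d,\lambda,T}\bigl\{\,|z-y|^a + (1+|z|^a+|y|^a)\,|t-s|^{a/2}\,\bigr\},
\end{equation*}
valid for every $a > 2$. The strategy is to introduce, on $[0,T]\times \R^{2d}$, the parabolic pseudo-metric $\rho((t,z),(s,y)) := |z-y| + |t-s|^{1/2}$ and rewrite the right hand side as $\widetilde C_{a,d,\lambda,T}(1+|z|^a+|y|^a)\,\rho((t,z),(s,y))^a$. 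Since the exponent $a$ can be taken arbitrarily large, the Kolmogorov criterion will yield local H\"older continuity with exponent as close to $1$ (in the $\rho$-metric) as we wish, from which the claimed regularity in $z$ and $t$ follows by unwinding $\rho$.

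First, I would localize. Fix a ball $B_N \subset \R^{2d}$ of radius $N$ and work on the parameter set $[0,T]\times B_N$, which is a bounded subset of $\R^{2d+1}$. On this set, $(1+|z|^a+|y|^a)$ is bounded by a constant depending on $N$, so the estimate becomes
\begin{equation*}
\E\bigl[|Z_t^z - Z_s^y|^a\bigr] \;\le\; C_{a,d,\lambda,T,N}\,\rho((t,z),(s,y))^a.
\end{equation*}
Next, I would apply Kolmogorov's theorem in the form for a $(2d+1)$-parameter family: if the $a$-th moment of the increments is bounded by $\rho^{a}$ with $a > 2d+1$, then there exists a modification that is locally $\theta$-H\"older with respect to $\rho$ for every $\theta < 1 - (2d+1)/a$. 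Taking $a \to \infty$ gives local H\"older continuity for every $\theta < 1$ with respect to $\rho$. Unwinding the definition of $\rho$, this yields $\alpha$-H\"older continuity in $z$ (uniformly in $t$) for every $\alpha < 1$ and $\beta$-H\"older continuity in $t$ (uniformly in $z \in B_N$) for every $\beta < 1/2$. A standard diagonal/consistency argument over the exhausting sequence of balls $B_N$ produces a single modification defined on all of $[0,T]\times \R^{2d}$ with the stated local H\"older regularity.

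The only subtle point, and the step I would be most careful with, is the consistency of the modification when passing from $B_N$ to $B_{N+1}$: Kolmogorov's theorem produces a modification ${\widetilde Z}^{(N)}$ on each $[0,T]\times B_N$, and one must verify that these agree on the overlap up to a null set so that they can be glued into a single modification on the whole space. This is standard (each $\widetilde Z^{(N)}_t(z) = Z_t^z$ almost surely for fixed $(t,z)$, and the countable family of points $(t,z)$ with $t \in \Q \cap [0,T]$ and $z \in \Q^{2d}$ is dense and determines the continuous extension), but it is worth carrying out explicitly to ensure that the single resulting process provides a genuine modification of $(Z_t^z)$ with the claimed local H\"older regularity in both variables simultaneously. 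No other obstacles arise, since Proposition \ref{Flow-continuity} already encapsulates all the analytic input that was difficult to obtain.
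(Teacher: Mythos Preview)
Your proposal is correct and follows exactly the approach the paper takes: the paper simply states that the theorem follows from Proposition \ref{Flow-continuity} by applying Kolmogorov's regularity theorem (\cite[Theorem I.10.3]{Ku84}), and your write-up just spells out the details of that application (the parabolic metric, the localization to balls, and the gluing of modifications).
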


From now on, we shall always use the continuous modification of $Z$ provided by this theorem.

\subsubsection{Injectivity and Surjectivity}

The proofs of the injectivity and surjectivity are inspired by \cite{Ku84} and are  similar to the ones given in Section 5 of  \cite{FF13a}. Thus  proofs of the main results in this section are   given in Appendix. 

\vv

To obtain the injectivity of the flow, we review the computations of Proposition \ref{Flow-continuity}: we now want to allow the exponent $a$ to be negative. The proofs of the following two lemmas are given in Appendix.

\begin{lemma}\label{lemma 2.4 Kunita}
Let $a$ be any real number and $\varepsilon>0$. Then there is a positive constant $C_{a,d}$ (independent of $\varepsilon$) such that for any $t\in[0,T]$ and $z,y \in\R^{2d}$
\begin{equation} \label{w INI flow}
\E\Big[ \Big(\varepsilon + \big|Z_{t}^z - Z_{t}^y\big|^2 \Big)^a\Big] \le C_{a,d} \big( \varepsilon + |z-y|^2 \big)^a.
\end{equation}
\end{lemma}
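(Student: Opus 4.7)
The strategy is to adapt the integration-by-parts argument from the proof of Theorem \ref{strong 1!} to the function $\phi_\varepsilon(x) = (\varepsilon + |x|^2)^a$, which is $C^2$ for \emph{any} real $a$ thanks to the regularization by $\varepsilon > 0$. I will work with the ``flattened'' increments $\xi_t := \gamma(Z_t^z) - \gamma(Z_t^y)$ (with $\gamma(z) = z + U(z)$ from \eqref{def phi}), exploiting the equivalence \eqref{dif} between $|\xi_t|$ and $|Z_t^z - Z_t^y|$. From equation \eqref{f5} one has $d\xi_t = V_t\, dt + M_t\, dW_t$ with $|V_t| \le C|\xi_t|$ (Lipschitz continuity of $\lambda U$ plus linearity of $A$) and $M_t = [DU(Z_t^z) - DU(Z_t^y)]R$, the latter satisfying, in the sense of integration, $\|M_t\|_{HS}^2\, dt = |\xi_t|^2\, dB_t$ with $B_t$ the process defined in \eqref{eq B_t}.

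The core computation is to apply It\^o's formula to $(\varepsilon + |\xi_t|^2)^b$ for a general real exponent $b$. The drift contribution is bounded by $C|b|(\varepsilon + |\xi_t|^2)^{b-1}|\xi_t|^2 \le C|b|(\varepsilon + |\xi_t|^2)^b$. The It\^o correction contains two terms: $2b(b-1)(\varepsilon + |\xi_t|^2)^{b-2}\xi_t^T M_t M_t^T \xi_t$ and $b(\varepsilon + |\xi_t|^2)^{b-1}\|M_t\|_{HS}^2$; by Cauchy--Schwarz, $\xi_t^T M_t M_t^T \xi_t \le |\xi_t|^2 \|M_t\|_{HS}^2$, and using $|\xi_t|^2 \le \varepsilon + |\xi_t|^2$, both terms are dominated by $C_b (\varepsilon + |\xi_t|^2)^{b-1}\|M_t\|_{HS}^2$. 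Substituting $\|M_t\|_{HS}^2\, dt = |\xi_t|^2\, dB_t$ and absorbing $|\xi_t|^2$ again, one obtains
\begin{equation*}
  d(\varepsilon + |\xi_t|^2)^b \;\le\; d\widetilde N_t \;+\; C_b (\varepsilon + |\xi_t|^2)^b \bigl(dt + dB_t\bigr),
\end{equation*}
for a local martingale $\widetilde N_t$. Introducing the integrating factor $e^{-C_b(t+B_t)}$ and localizing to control the stochastic integral, then taking expectations and applying ordinary Gr\"onwall in the $dt$-part, yields
\begin{equation*}
 \E\bigl[e^{-C_b B_T}(\varepsilon + |\xi_T|^2)^b\bigr] \;\le\; e^{C_b T}(\varepsilon + |\xi_0|^2)^b.
\end{equation*}

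To recover the desired bound on $\E[(\varepsilon + |\xi_T|^2)^a]$, I apply the above with $b = ap$ for some $p > 1$, then use H\"older's inequality with conjugate exponent $q = p/(p-1)$:
\begin{equation*}
 \E\bigl[(\varepsilon + |\xi_T|^2)^a\bigr] \;\le\; \E\bigl[e^{-C_{ap} B_T}(\varepsilon + |\xi_T|^2)^{ap}\bigr]^{1/p}\, \E\bigl[e^{C_{ap} q B_T/p}\bigr]^{1/q}.
\end{equation*}
The second factor is finite and independent of the initial conditions by Lemma \ref{lemma exp At}, once one notes the pointwise bound $B_T \le C^2 A_T$ recorded in \eqref{eq B_t}. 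Combining with the previous step gives $\E[(\varepsilon + |\xi_T|^2)^a] \le C_a (\varepsilon + |\xi_0|^2)^a$. To transfer the estimate from $\xi$ back to $Z^z - Z^y$ I use \eqref{dif} in both directions: there exist $c_1, c_2 > 0$ with $c_1(\varepsilon + |Z^z - Z^y|^2) \le \varepsilon + |\xi|^2 \le c_2 (\varepsilon + |Z^z - Z^y|^2)$ (applied both at $t=0$ and at $t=T$), and the resulting multiplicative constants depend on $a$ and $d$ only, \emph{not} on $\varepsilon$; note that the direction of the inequality is correctly preserved regardless of the sign of $a$.

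The main obstacle, compared with Corollary \ref{cor Z-Y} (the case $a \ge 2$), is the case $a < 0$: the function $\phi_\varepsilon$ now blows up when $\xi_t \to 0$, so one cannot discard any term in It\^o's formula, and in particular the bad diffusion term must be handled quantitatively. The decisive ingredient is that the non-Lipschitz character of $F$ manifests itself only through the process $B_t$, which, by Lemma \ref{lemma exp At}, has finite exponential moments of \emph{every} order uniformly in the initial points; this is precisely what allows the H\"older step in the argument to close with a constant $C_{a,d}$ independent of $\varepsilon$, $z$, and $y$.
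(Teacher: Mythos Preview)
Your proof is correct and follows essentially the same approach as the paper's: apply It\^o's formula to $(\varepsilon+|\gamma(Z_t^z)-\gamma(Z_t^y)|^2)^b$, bound the drift and It\^o-correction terms by $C_b(\varepsilon+|\xi_t|^2)^b(\dd t+\dd B_t)$, kill the $\dd B_t$-term with an exponential integrating factor, then use H\"older's inequality together with the exponential integrability of $B_t$ (via \eqref{eq B_t} and Lemma \ref{lemma exp At}) to remove the weight, and finally transfer the estimate back to $Z_t^z-Z_t^y$ through \eqref{dif}. The only cosmetic differences are that the paper works directly with the process $A_t$ from \eqref{def At} rather than $B_t$, and carries out the H\"older step with conjugate exponents $p=q=2$ (implicitly repeating the It\^o argument with exponent $2a$) instead of your more general choice $b=ap$.
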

 \smallskip

\begin{corollary}\label{cor flow INI}
Let $\varepsilon$ tend to zero in Lemma \ref{lemma 2.4 Kunita}. Then, by monotone convergence, we have:
\begin{equation}\label{w INI flow,e=0}
\E\Big[\, \big|Z_{t}^z - Z_{t}^y \big|^a \Big]\le C_{a,d} |z-y|^a \, .
\end{equation}
Considering the case $a<0$ we get that  $z\neq y \Rightarrow 
Z_{t}^z\neq  Z_{t}^y$ a.s. for any $t\in[0,T]$. 
\end{corollary}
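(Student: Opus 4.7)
The plan is simply to pass to the limit $\varepsilon \downarrow 0$ in the estimate of Lemma \ref{lemma 2.4 Kunita}, applied with exponent $a/2$ in place of $a$, namely
\begin{equation*}
\E\Big[\big(\varepsilon + |Z_t^z - Z_t^y|^2\big)^{a/2}\Big] \le C_{a,d}\big(\varepsilon + |z-y|^2\big)^{a/2}.
\end{equation*}
Since $z\neq y$ is fixed, the right-hand side converges to $|z-y|^a$ by continuity. For the left-hand side, I would split into cases according to the sign of $a$.

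If $a>0$, the integrand is monotone decreasing as $\varepsilon\downarrow 0$, converging pointwise to $|Z_t^z-Z_t^y|^a$; it is dominated by its value at $\varepsilon = 1$, whose expectation is finite by Lemma \ref{lemma 2.4 Kunita}, so dominated convergence applies. If $a<0$, the integrand is monotone increasing as $\varepsilon\downarrow 0$, with pointwise limit $|Z_t^z-Z_t^y|^a$ on the event $\{Z_t^z\neq Z_t^y\}$ and $+\infty$ on its complement; hence monotone convergence applies and yields
\begin{equation*}
\E\big[|Z_t^z-Z_t^y|^a\big] \le C_{a,d}|z-y|^a,
\end{equation*}
where the left-hand side is interpreted as $+\infty$ whenever $\PP(Z_t^z=Z_t^y)>0$. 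The case $a=0$ is trivial.

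Now fix $a<0$. The bound above is finite for $z\neq y$, so necessarily $\PP(Z_t^z=Z_t^y)=0$; this is the almost sure injectivity statement at the fixed time $t$. The only subtle point — and the one I would be careful to state cleanly — is that monotone convergence is being used on an \emph{increasing} sequence of nonnegative integrands (the case $a<0$), so it applies without integrability assumptions on the limit; the finiteness of the limit integral is then a consequence of the estimate rather than an input. No further ingredients beyond Lemma \ref{lemma 2.4 Kunita} are needed.
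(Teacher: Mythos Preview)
Your proof is correct and follows the same approach as the paper, which embeds the entire argument in the corollary statement itself (``by monotone convergence''). You have simply fleshed out the details---the reindexing $a \leftrightarrow a/2$, the case split on the sign of $a$, and the observation that finiteness of the expectation for $a<0$ forces $\PP(Z_t^z = Z_t^y)=0$---all of which are implicit in the paper's one-line justification.
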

Kunita in \cite{Ku84} calls the previous property ``weak injectivity''. This intermediate result allows to obtain the following lemma.

\begin{lemma}\label{lemma 4.1 Kunita}
For $t\in[0,T]$ set $\eta_t(z,y):= |Z_{t}^z - Z_{t}^y|^{-1}$. 
Then for any $a>2$ there exists a constant $C=C_{a,d,\lambda,T}$ such that for any $\delta>0$
\begin{align*}\label{tesi lemma 4.1 kunita}
\E\Big[ \big|\eta_t(z,y) - \eta_{t'}(z',y')\big|^a \Big ] \le C\, \delta^{-2a} \Big[ & |z-z'|^a + |y-y' |^a \\
&+ \big(1+ |z|^a + |z'|^a + |y|^a + |y'|^a \big) |t-t'|^{a/2} \Big]  \nonumber
\end{align*}
holds for any $t,t'\in[0,T]$ and $|z-z'|\ge \delta$,  $|y-y'|\ge \delta$.
\end{lemma}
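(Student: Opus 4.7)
The plan is to produce moment estimates on differences of $\eta$ suitable for Kolmogorov's continuity theorem by writing $\eta_t(z,y)-\eta_{t'}(z',y')$ as a quotient whose numerator is controlled by the flow continuity of Proposition \ref{Flow-continuity} and whose denominator is kept away from zero by Corollary \ref{cor flow INI} with a negative exponent — this last step is where the separation hypothesis on the initial conditions enters (read naturally as $|z-y|,|z'-y'|\ge\delta$, which is what is needed to bound the inverse of $|Z^z-Z^y|$).

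First I would use the identity $\frac{1}{|u|}-\frac{1}{|v|}=\frac{|v|-|u|}{|u||v|}$ together with the reverse triangle inequality to get
\begin{equation*}
|\eta_t(z,y)-\eta_{t'}(z',y')|\le \frac{|Z_t^z-Z_{t'}^{z'}|+|Z_t^y-Z_{t'}^{y'}|}{|Z_t^z-Z_t^y|\,|Z_{t'}^{z'}-Z_{t'}^{y'}|}.
\end{equation*}
Raising to the power $a>2$ and using $(u+v)^a\le 2^{a-1}(u^a+v^a)$ splits the estimate into two symmetric terms. For each of them I would apply H\"older's inequality with conjugate exponents $(2,4,4)$:
\begin{equation*}
\E\!\left[\frac{|Z_t^z-Z_{t'}^{z'}|^a}{|Z_t^z-Z_t^y|^a\,|Z_{t'}^{z'}-Z_{t'}^{y'}|^a}\right]\le \E\bigl[|Z_t^z-Z_{t'}^{z'}|^{2a}\bigr]^{1/2}\,\E\bigl[|Z_t^z-Z_t^y|^{-4a}\bigr]^{1/4}\,\E\bigl[|Z_{t'}^{z'}-Z_{t'}^{y'}|^{-4a}\bigr]^{1/4}.
\end{equation*}

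Next I would feed in the two ingredients. Proposition \ref{Flow-continuity} applied with exponent $2a$, followed by subadditivity of the square root, gives
\begin{equation*}
\E\bigl[|Z_t^z-Z_{t'}^{z'}|^{2a}\bigr]^{1/2}\le C\bigl\{|z-z'|^a+(1+|z|^a+|z'|^a)|t-t'|^{a/2}\bigr\},
\end{equation*}
while Corollary \ref{cor flow INI} applied with the negative exponent $-4a$ yields
\begin{equation*}
\E\bigl[|Z_t^z-Z_t^y|^{-4a}\bigr]^{1/4}\le C|z-y|^{-a}\le C\delta^{-a},\qquad \E\bigl[|Z_{t'}^{z'}-Z_{t'}^{y'}|^{-4a}\bigr]^{1/4}\le C\delta^{-a},
\end{equation*}
thanks to the separation hypothesis. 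Multiplying these factors produces the overall $\delta^{-2a}$, together with the first group of terms on the right-hand side of the claimed bound. Repeating the identical argument with $|Z_t^y-Z_{t'}^{y'}|$ replacing $|Z_t^z-Z_{t'}^{z'}|$ in the numerator contributes the symmetric terms $|y-y'|^a$ and $(1+|y|^a+|y'|^a)|t-t'|^{a/2}$, and the proof is complete.

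The main obstacle I anticipate is the justification that Corollary \ref{cor flow INI} really applies for arbitrarily large negative exponents: the corollary is stated for any $a\in\mathbb{R}$ and rests on Lemma \ref{lemma 2.4 Kunita} combined with monotone convergence, so the machinery is in place, but one must verify that the uniform constant $C_{a,d}$ provided there does depend only on the moment exponent and the dimension — not on the separation $\delta$ — so that the factor $\delta^{-2a}$ appears cleanly and only once. A secondary bookkeeping issue is tracking the dependence on $\lambda$ and $T$ through the constants coming from the flow-continuity estimate, but these are harmlessly absorbed into the single constant $C_{a,d,\lambda,T}$ stated in the lemma.
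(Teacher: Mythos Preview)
Your proposal is correct and essentially identical to the paper's own proof: the same algebraic identity for the difference of reciprocals, the same triangle-type inequality in the numerator, the same H\"older split with exponents $(2,4,4)$, and the same appeal to Proposition~\ref{Flow-continuity} and Corollary~\ref{cor flow INI}. Your reading of the separation hypothesis as $|z-y|\ge\delta$ and $|z'-y'|\ge\delta$ is also the correct one --- the statement as written contains a typo, and the paper's proof uses exactly the bounds $|z-y|^{-a}|z'-y'|^{-a}\le\delta^{-2a}$ that you describe.
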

  
\smallskip

\begin{theorem}\label{teo flow 1-1}
The map $Z_{t}:\R^{2d}\rightarrow\R^{2d}$  is one to one, for any $t\in[0,T]$ almost surely.
\end{theorem}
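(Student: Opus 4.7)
My plan is to combine the moment bound for $\eta_t(z,y)=|Z_t^z-Z_t^y|^{-1}$ provided by Lemma \ref{lemma 4.1 Kunita} with the Kolmogorov--Chentsov continuity theorem, so as to upgrade the pointwise non-collision of Corollary \ref{cor flow INI} into an almost sure statement that holds simultaneously over all $(t,z,y)$ off the diagonal $\{z=y\}$. Finiteness of $\eta_t(z,y)$ at every such triple is equivalent to the desired injectivity of $z\mapsto Z_t^z$.

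First I would fix $N,m\in\N$ and work on the compact region
\[
D_{N,m}=[0,T]\times\{(z,y)\in\overline{B}_N\times\overline{B}_N:\ |z-y|\geq 1/m\}\subset\R^{1+4d}.
\]
On $D_{N,m}$, Lemma \ref{lemma 4.1 Kunita} applied with $\delta=1/m$ yields, for every $a>2$,
\[
\E\bigl[|\eta_t(z,y)-\eta_{t'}(z',y')|^a\bigr]\leq C_{N,m,a}\bigl(|z-z'|^a+|y-y'|^a+|t-t'|^{a/2}\bigr).
\]
Choosing $a$ sufficiently large (e.g.\ $a>2+8d$, so that the effective exponent $a/2$ exceeds the ambient dimension $1+4d$), Kolmogorov--Chentsov produces a jointly locally H\"older continuous, and in particular finite-valued, modification $\tilde\eta^{(N,m)}$ of $\eta$ on $D_{N,m}$, defined on a full-measure event $\Omega_{N,m}$.

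Next I would identify this modification with the quantity built from the continuous version $\tilde Z$ of $Z$ supplied by Theorem \ref{teo continuity}. Picking a countable dense set $S\subset D_{N,m}$, for each $(t,z,y)\in S$ one has $\tilde\eta^{(N,m)}_t(z,y)\cdot|\tilde Z_t^z-\tilde Z_t^y|=1$ almost surely; intersecting over $S$ gives a further full-measure event $\Omega'_{N,m}\subset\Omega_{N,m}$ on which the two jointly continuous functions $\tilde\eta^{(N,m)}$ and $(t,z,y)\mapsto|\tilde Z_t^z-\tilde Z_t^y|$ are mutually inverse on $S$, and hence on all of $D_{N,m}$ by density. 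In particular $|\tilde Z_t^z-\tilde Z_t^y|$ is bounded below by $(\sup_{D_{N,m}}\tilde\eta^{(N,m)})^{-1}>0$, so $\tilde Z_t^z\neq\tilde Z_t^y$ throughout $D_{N,m}$.

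Finally I would set $\Omega^{\ast}=\bigcap_{N,m\in\N}\Omega'_{N,m}$, which is still a probability-one event. For any $\omega\in\Omega^{\ast}$, any $t\in[0,T]$ and any distinct $z,y\in\R^{2d}$, one picks $N\geq\max(|z|,|y|)$ and $m\geq 1/|z-y|$ so that $(t,z,y)\in D_{N,m}$, and the previous step yields $\tilde Z_t^z(\omega)\neq\tilde Z_t^y(\omega)$, establishing injectivity of the flow at every $t\in[0,T]$ with a single null set. The genuinely delicate ingredient is Lemma \ref{lemma 4.1 Kunita} itself, which already encodes both the Veretennikov-type estimate (via the process $A_t$ and the exponential integrability of Lemma \ref{lemma exp At}) and the uniform control in $z,y$ needed to feed Kolmogorov; once it is in hand the remainder is a routine, if slightly technical, exercise in continuity transfer.
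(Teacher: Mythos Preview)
Your proof is correct and follows essentially the same strategy as the paper: feed the moment estimate of Lemma~\ref{lemma 4.1 Kunita} into Kolmogorov's continuity theorem on regions bounded away from the diagonal, then exhaust. The paper's writeup is terser --- it invokes Kolmogorov once on $\{|z-y|\ge\delta\}$, lets $\delta\to 0$, and then argues finiteness of $\eta$ via connectedness of $\mathcal D=\{(t,z,y):z\neq y\}$ and the fact that $\eta_0$ is finite --- whereas you carry out the countable exhaustion over $D_{N,m}$ explicitly and take extra care to identify the Kolmogorov modification with $|\tilde Z_t^z-\tilde Z_t^y|^{-1}$ using the already continuous flow from Theorem~\ref{teo continuity}. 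Your version is slightly more pedantic but also more airtight on the modification issue; the underlying idea is the same.
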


\begin{proof} Take $a/2 > 2 (d+1)$ in Lemma \ref{lemma 4.1 Kunita}. Kolmogorov's theorem states that $\eta_t(z,y)$ is continuous in $(t,z,y)$ in the domain $\{(t,z,y)|\, t\in[0,T], |z-y|\ge\delta\}$. Since $\delta$ is arbitrary, it is also continuous in the domain $\mathcal D:=\{(t,z,y)|\,t\in[0,T], z\neq y\}$. Note that $\mathcal D$ has at most two connected components, both intersecting the hyperplane $\{t=0\}$. Then, since $\eta_0$ is finite, $\eta_t$ must be finite on all of $\mathcal D$. Therefore, if $z\neq y$, $Z_{t}^z\neq Z_{t}^y$, and the theorem is proved.
\end{proof}

\vv Surjectivity will follow from the next lemma which is similar to \cite[Lemma II.4.2]{Ku84}.  Theorem \ref{teo flow onto} below can be proved using an homotopy argument, as in \cite[pag. 226]{Ku84}. Both proofs will be given in Appendix.

\begin{lemma}\label{lemma 4.2 Kunita}
Let $\widehat{\R}^{2d}$ be the one point compactification (Alexandrov compactification) of $\R^{2d}$. For $z \in\R^{2d}\backslash \{0\}$ set $\widehat{z}:=z/|z|^2$, $\widehat{z}:=\infty$ for $z=0$ and define for every $t\in[0,T]$ 
\begin{equation*}
\eta_t(\widehat{z}):=\left \{ \begin{array} [2]{c}
\dfrac{1}{1+|Z_t^z|}\quad if \ z\in\R^{2d},\\
\quad 0\quad\quad \quad if \ \widehat{z}=0 \, .
\end{array} \right. 
\end{equation*}
Then, for any positive $a$ there exists a constant $C=C_{a,d,\lambda,T}$ such that
\begin{equation*}
\E\Big[ \big| \eta_t(\widehat{z})-\eta_{t'}(\widehat{y}) \big|^a \Big] \le C \Big( |\widehat{z} - \widehat{y}|^a + |t-t'|^{a/2} \Big) \, ,\;\;\; \widehat y,\,\widehat z \in \widehat\R^{2d}, \; t,\, t' \in [0,T] \, .
\end{equation*}
\end{lemma}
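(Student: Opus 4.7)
The plan is to bound separately the spatial increment $\eta_{t}(\widehat z)-\eta_{t}(\widehat y)$ (at fixed $t$) and the temporal increment $\eta_{t}(\widehat z)-\eta_{t'}(\widehat z)$ (at fixed $\widehat z$), and then combine via the triangle inequality. Two elementary ingredients will be used throughout: the identity
\[
|\widehat z - \widehat y| \;=\; \frac{|z-y|}{|z|\,|y|}, \qquad z,y\in\R^{2d}\setminus\{0\},
\]
(which follows from a short computation of $|z\,|y|^{2}-y\,|z|^{2}|^{2}=|z|^{2}|y|^{2}|z-y|^{2}$) and the chain of comparisons $(1+|z|)^{-a}\le (1+|z|^{2})^{-a/2}\le |z|^{-a}$, valid for $a,|z|>0$, which lets one pass freely between $(1+|z|)^{-a}$, $(1+|z|^{2})^{-a/2}$ and $|z|^{-a}$ up to absolute constants.

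For the spatial part, I first take $\widehat z,\widehat y\ne 0$, so that $z=\widehat z/|\widehat z|^{2}$ and $y=\widehat y/|\widehat y|^{2}$ lie in $\R^{2d}\setminus\{0\}$. From
\[
|\eta_{t}(\widehat z)-\eta_{t}(\widehat y)| \;\le\; \frac{|Z_{t}^{z}-Z_{t}^{y}|}{(1+|Z_{t}^{z}|)(1+|Z_{t}^{y}|)},
\]
H\"older's inequality with exponents $(2,4,4)$ yields
\[
\E\big[|\eta_{t}(\widehat z)-\eta_{t}(\widehat y)|^{a}\big] \le \E[|Z_{t}^{z}-Z_{t}^{y}|^{2a}]^{1/2}\,\E[(1+|Z_{t}^{z}|)^{-4a}]^{1/4}\,\E[(1+|Z_{t}^{y}|)^{-4a}]^{1/4}.
\]
Corollary \ref{cor flow INI} (combined with Jensen's inequality when $a<1$) controls the first factor by $C|z-y|^{a}$, while Lemma \ref{lemma 2.3 Kunita} applied with the negative exponent $-2a$ controls each of the last two by $C|z|^{-a}$ and $C|y|^{-a}$ respectively. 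Multiplying yields $C|z-y|^{a}/(|z|^{a}|y|^{a})=C|\widehat z-\widehat y|^{a}$. In the boundary case $\widehat y=0$ with $\widehat z\ne 0$, one has $\eta_{t}(\widehat y)=0$ and the direct estimate $\E[\eta_{t}(\widehat z)^{a}]=\E[(1+|Z_{t}^{z}|)^{-a}]\le C|z|^{-a}=C|\widehat z|^{a}=C|\widehat z-\widehat y|^{a}$ again follows from Lemma \ref{lemma 2.3 Kunita}; the case $\widehat z=\widehat y=0$ is trivial.

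For the temporal part at fixed $\widehat z\ne 0$ the same H\"older trick gives
\[
\E\big[|\eta_{t}(\widehat z)-\eta_{t'}(\widehat z)|^{a}\big] \le \E[|Z_{t}^{z}-Z_{t'}^{z}|^{2a}]^{1/2}\,\E[(1+|Z_{t}^{z}|)^{-4a}]^{1/4}\,\E[(1+|Z_{t'}^{z}|)^{-4a}]^{1/4}.
\]
The first factor is now estimated by Proposition \ref{Flow-continuity}, yielding $C(1+|z|)^{a}|t-t'|^{a/2}$, while each of the two negative-moment factors is bounded by $C(1+|z|)^{-a}$ via Lemma \ref{lemma 2.3 Kunita}. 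Their product is $C(1+|z|)^{-a}|t-t'|^{a/2}\le C|t-t'|^{a/2}$, uniformly in $z$. The case $\widehat z=0$ is trivial since $\eta_{t}(0)\equiv 0$. Combining via the triangle inequality $|\eta_{t}(\widehat z)-\eta_{t'}(\widehat y)|\le |\eta_{t}(\widehat z)-\eta_{t}(\widehat y)|+|\eta_{t}(\widehat y)-\eta_{t'}(\widehat y)|$ then delivers the claimed Kolmogorov-type estimate.

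The only genuinely delicate point is the polynomial-versus-negative-moment bookkeeping in the temporal estimate: the factor $(1+|z|)^{a}$ produced by Proposition \ref{Flow-continuity} would spoil a uniform bound over $\widehat\R^{2d}$ if not absorbed, and the weight $1/(1+|Z_{t}|)$ built into the definition of $\eta_{t}$ supplies precisely the two negative-moment factors needed to cancel it. This is exactly what dictates the choice of H\"older exponents $(2,4,4)$; once the cancellation is in place, all remaining estimates reduce to routine applications of the moment bounds already established in Lemmas \ref{U moments estimate}--\ref{lemma 2.3 Kunita} and Corollary \ref{cor flow INI}.
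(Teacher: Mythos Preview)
Your proof is correct and follows essentially the same route as the paper's. The only organizational difference is that the paper estimates $\E[|\eta_t(\widehat z)-\eta_{t'}(\widehat y)|^a]$ in a single step, using Proposition~\ref{Flow-continuity} directly on $\E[|Z_t^z-Z_{t'}^y|^{2a}]$ (which already yields the combined bound $|z-y|^a+(1+|z|^a+|y|^a)|t-t'|^{a/2}$) together with the inequality $(1+|z|)^{-1}(1+|y|)^{-1}|z-y|\le |\widehat z-\widehat y|$, whereas you split via the triangle inequality into a spatial increment (handled by Corollary~\ref{cor flow INI}) and a temporal one (handled by Proposition~\ref{Flow-continuity}). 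The H\"older exponents, the use of Lemma~\ref{lemma 2.3 Kunita} for the negative moments, and the inversion identity are identical in both arguments.
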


\begin{theorem}\label{teo flow onto}
The map $Z_{t}:\R^{2d}\rightarrow\R^{2d}$ is onto for any $t\in[0,T]$ almost surely.
\end{theorem}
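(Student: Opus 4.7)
The plan is to follow the standard Kunita homotopy argument, using Lemma \ref{lemma 4.2 Kunita} as the analytic input. First, I would apply Kolmogorov's continuity theorem on the compact metric space $\widehat{\R}^{2d} \simeq S^{2d}$ to the estimate in Lemma \ref{lemma 4.2 Kunita}, choosing $a$ large enough (say $a > 4d+2$) so that the H\"older exponent exceeds the dimension. This yields a modification of $(t,\widehat{z}) \mapsto \eta_t(\widehat{z})$ that is jointly continuous on $[0,T] \times \widehat{\R}^{2d}$, $\PP$-a.s. Translating this back through the definition $\eta_t(\widehat{z}) = (1+|Z_t^z|)^{-1}$, one obtains that the extended map
\[
\widehat{Z}_t : \widehat{\R}^{2d} \to \widehat{\R}^{2d}, \qquad \widehat{Z}_t(z) := Z_t^z \text{ for } z \in \R^{2d}, \qquad \widehat{Z}_t(\infty) := \infty
\]
is continuous in $(t,z)$ on $[0,T] \times \widehat{\R}^{2d}$, $\PP$-a.s. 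In particular, $z_n \to \infty$ in $\R^{2d}$ forces $|Z_t^{z_n}| \to \infty$.

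Second, I would combine this with Theorem \ref{teo flow 1-1}: $\widehat{Z}_t$ restricted to $\R^{2d}$ is injective, and by construction $\infty$ is the unique preimage of $\infty$, so $\widehat{Z}_t$ is injective on all of $\widehat{\R}^{2d}$. Hence $\widehat{Z}_t$ is a continuous injection of $S^{2d}$ into itself.

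Third, I would apply the homotopy argument. Define $H: [0,1] \times \widehat{\R}^{2d} \to \widehat{\R}^{2d}$ by $H(s, \widehat{z}) := \widehat{Z}_{st}(\widehat{z})$. Joint continuity from the first step gives continuity of $H$, with $H(0,\cdot) = \mathrm{id}$ and $H(1,\cdot) = \widehat{Z}_t$. Since $\widehat{\R}^{2d} \simeq S^{2d}$, topological degree is invariant under continuous homotopies, so $\deg(\widehat{Z}_t) = \deg(\mathrm{id}) = 1$, hence $\widehat{Z}_t$ is surjective onto $\widehat{\R}^{2d}$. (Equivalently, one may invoke invariance of domain: a continuous injection $S^{2d} \to S^{2d}$ has image which is both open and closed, hence, by connectedness, all of $S^{2d}$.) Finally, since $\infty$ is the only preimage of $\infty$ under $\widehat{Z}_t$, surjectivity on $\widehat{\R}^{2d}$ restricts to surjectivity of $Z_t$ on $\R^{2d}$.

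The main obstacle is establishing the joint continuity of $\widehat{Z}_t$ at the point at infinity, since $\R^{2d}$ is non-compact and the flow estimates of Proposition \ref{Flow-continuity} degenerate as $|z| \to \infty$. Lemma \ref{lemma 4.2 Kunita} is designed precisely to overcome this by using the inversion $\widehat{z} = z/|z|^2$, which turns behavior near infinity into behavior near $0$. Once that lemma is in hand, the Kolmogorov theorem on the compactified space gives continuity up to $\infty$ for free, and the rest of the argument is purely topological.
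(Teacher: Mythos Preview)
Your proposal is correct and follows essentially the same route as the paper: apply Kolmogorov's theorem to the estimate of Lemma~\ref{lemma 4.2 Kunita} to extend the flow continuously to $\widehat{\R}^{2d}\simeq S^{2d}$, then use that $\widehat{Z}_t$ is homotopic to the identity to deduce surjectivity. The only cosmetic difference is the topological endgame: the paper argues by contradiction via contractibility of $S^{2d}$ minus a point and $\pi_{2d}(S^{2d})=\mathbb{Z}$, whereas you invoke degree theory (or, alternatively, invariance of domain together with injectivity from Theorem~\ref{teo flow 1-1}); note also that for the final restriction step you only need $\widehat{Z}_t(\infty)=\infty$, not that $\infty$ is the \emph{unique} preimage of $\infty$.
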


\subsubsection{The flow}

We resume the results we have obtained so far for the flow associated to the SDE  \eqref{eq Z} in the following theorem.

\begin{theorem}\label{teo-flow}
The unique strong solution $Z_t = (X_t, V_t)$ of the SDE \eqref{eq-SDE} defines a stochastic flow of H\"older continuous homeomorphisms $\phi_t$.
\end{theorem}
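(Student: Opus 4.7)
The plan is to assemble the results established in the preceding subsections rather than introduce new arguments. Define $\phi_t(z) := Z_t^z$, the unique strong solution to \eqref{eq Z} with initial condition $z$, whose existence is guaranteed by Theorem \ref{strong 1!}. The joint H\"older regularity of $(t,z) \mapsto \phi_t(z)$, with local $\alpha$-H\"older exponent in $z$ for every $\alpha < 1$ and $\beta$-H\"older exponent in $t$ for every $\beta < 1/2$, is exactly Theorem \ref{teo continuity}, which followed from the moment estimate of Proposition \ref{Flow-continuity} via Kolmogorov's regularity criterion.

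For each fixed $t \in [0,T]$, Theorem \ref{teo flow 1-1} yields injectivity of $\phi_t$ and Theorem \ref{teo flow onto} yields surjectivity, so almost surely $\phi_t : \R^{2d} \to \R^{2d}$ is a continuous bijection. The remaining point is continuity of the inverse $\phi_t^{-1}$, which I would obtain via the standard compactification argument of Kunita (Chapter II.5 of \cite{Ku84}): using Lemma \ref{lemma 4.2 Kunita}, one shows that $\phi_t$ extends continuously to a bijection $\widehat{\phi}_t : \widehat{\R}^{2d} \to \widehat{\R}^{2d}$ of the one-point compactification fixing $\infty$. Since $\widehat{\R}^{2d}$ is a compact Hausdorff space, any continuous bijection is automatically a homeomorphism, and restricting to $\R^{2d}$ gives the continuity of $\phi_t^{-1}$.

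Finally, for the flow (cocycle) property, I would invoke strong uniqueness together with the Markov property of the driving Brownian motion: for $0 \le s \le t \le T$, the process $u \mapsto \phi_{s+u}(z)$ is a strong solution to \eqref{eq Z} on $[0, t-s]$ starting from $\phi_s(z)$ and driven by the shifted Wiener process $(W_{s+u} - W_s)_{u \ge 0}$, so by the uniqueness of Theorem \ref{strong 1!} it must coincide $\PP$-a.s. with $\phi_u(\phi_s(z))$. Evaluating at $u = t-s$ yields $\phi_t(z) = \phi_{t-s}(\phi_s(z))$.

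The genuinely non-trivial point in this assembly is the continuity of the inverse, and the main obstacle there (control at infinity needed to apply the compactification trick) has already been circumvented by Lemma \ref{lemma 4.2 Kunita}; the rest is collecting previously established facts.
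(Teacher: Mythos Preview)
Your proposal is correct and follows essentially the same approach as the paper: you assemble Theorem \ref{teo continuity} (H\"older continuity), Theorem \ref{teo flow 1-1} (injectivity), Theorem \ref{teo flow onto} (surjectivity), and obtain continuity of the inverse via the one-point compactification argument based on Lemma \ref{lemma 4.2 Kunita}, exactly as the paper does. You additionally sketch the cocycle property, which the paper leaves implicit; note only that your identity $\phi_t(z) = \phi_{t-s}(\phi_s(z))$ should be read with $\phi_{t-s}$ driven by the shifted noise $(W_{s+u}-W_s)_{u\ge 0}$, i.e.\ as a two-parameter flow identity, to be precise.
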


\begin{proof}
The map $Z_{t}(\omega)$ is H\"older continuous, see Theorem \ref{teo continuity}, it is one to one by Theorem \ref{teo flow 1-1} and it is onto by Theorem \ref{teo flow onto}. Hence the inverse map $\big(Z_{t}(\omega)\big)^{-1}$ is well defined, one to one and onto. We claim that it is also continuous. Indeed, since the map $\widehat{Z}_{t}(\omega)$ is one to one and continuous from the compact space $\widehat{\R^{2d}}$ into itself, it is a closed map. Hence the inverse map $\big(\widehat{Z}_{t}(\omega)\big)^{-1}$ is continuous, and so is its restriction to $\R^{2d}$.
\end{proof}

\subsection{Regularity of the derivatives}      \label{subsec: derivatives}
Although F is not even weakly differentiable, from the reformulation \eqref{forse} of equation \eqref{eq Z} it is reasonable to expect differentiability of the flow, since the derivatives $D X_t$, $D V_t$ with respect to the initial conditions $(x,v)$ formally solve suitable SDEs with well-defined, integrable coefficients. We have the following result.

\begin{theorem}\label{teo SDE-differentiable flow}
Let $\phi_t(z)$ be the flow associated to \eqref{eq Z} provided by Theorem \ref{teo-flow}.  
Then, for any $t \in [0,T]$, $\PP$-a.s., 
  the random variable  $\phi_t(z)$ admits a weak distributional  derivative
 with respect to $z$; moreover
 $D_z \phi_t \in L^p_{loc}(\Omega \times \R^{2d}) $, for any  $p \ge1$. 
\end{theorem}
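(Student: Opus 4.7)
The plan is to follow the mollification scheme of \cite{FF13a,FF13b} adapted to the degenerate mixed-regularity setting of Hypothesis \ref{hyp-holder}. I approximate the drift by smooth convolutions $F_n \to F$ in $L^p(\R^d_v; H^s_p(\R^d_x))$, consider the corresponding SDE \eqref{eq Z} (with $F$ replaced by $F_n$) whose flow $z \mapsto Z_t^{n,z}$ is classically smooth, establish uniform-in-$n$ local $L^q(\Omega \times K)$ bounds on the Jacobians $J_t^n := D_z Z_t^n$ for every $q \ge 1$ and compact $K$, extract a weak subsequential limit $\chi$, and identify $\chi$ with the distributional derivative of $z \mapsto Z_t^z$.

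For the uniform Jacobian estimate the first step is to pass to the variable $Y_t^n := \gamma_n(Z_t^n)$, where $\gamma_n = \mathrm{id} + U_n$ and $U_n$ solves the Kolmogorov equation \eqref{PDE-lambda} with $F$ replaced by $F_n$. By the estimates of Theorem \ref{PDE-1!} we can fix a single $\lambda$ large enough that \eqref{stima nablaU} holds uniformly in $n$, so that all $\gamma_n$ are bi-Lipschitz with common constants. The process $Y_t^n$ satisfies \eqref{eq phi(Z)} with a Lipschitz drift $\widetilde b_n = \lambda U_n + Az$ (uniformly bounded derivative) and a bounded diffusion $\widetilde \sigma_n = (DU_n + \mathbb{I})R$ whose derivative satisfies $|D\widetilde \sigma_n(z)| \le C|D^2 U_n(z)|$, with $\|D^2 U_n\|_{L^p(\R^{2d})}$ uniformly bounded by \eqref{es41}. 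Differentiating \eqref{eq phi(Z)} in $z$, the matrix-valued $\widetilde J_t^n := D_z Y_t^n$ solves a linear SDE
\begin{equation*}
d \widetilde J_t^n = \widetilde A_n(Z_t^n)\, \widetilde J_t^n \, dt + \widetilde \Sigma_n(Z_t^n)\, \widetilde J_t^n \, dW_t, \qquad \widetilde J_0^n = D\gamma_n(z),
\end{equation*}
with $|\widetilde A_n| \le C$ and $|\widetilde \Sigma_n(z)|^2 \le C|D^2 U_n(z)|^2$.

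The key step is now exactly the one in the proof of Theorem \ref{strong 1!}: apply It\^o to $|\widetilde J_t^n|^{2a}$, introduce the exponential weight $\exp(-c \int_0^t |D^2 U_n(Z_s^n)|^2 ds)$ to absorb the martingale-square term, and then remove the weight by H\"older using the uniform exponential integrability
\begin{equation*}
\sup_n \E\Big[\exp\Big(k \int_0^T |D^2 U_n(Z_s^n)|^2 \, ds\Big)\Big] < \infty.
\end{equation*}
This last bound follows from \eqref{exp-fZ - Lp} applied with $f = |D^2 U_n|^2 \in L^{p/2}(\R^{2d})$, whose $L^{p/2}$-norm is bounded uniformly in $n$ by \eqref{es41}; the hypothesis $p > 6d$ gives $p/2 > 3d > 2d$, which is exactly the integrability threshold in Lemma \ref{lem Z-Lp}. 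Gr\"onwall then yields $\sup_{n,z} \E[|\widetilde J_t^n|^q] \le C_q$ for every $q \ge 1$, and the bi-Lipschitz control on $\gamma_n$ transfers this estimate to $J_t^n = (\mathbb{I} + DU_n(Z_t^n))^{-1} \widetilde J_t^n$.

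To conclude, Fubini gives $\sup_n \|J_t^n\|_{L^q(\Omega \times K)} < \infty$ for every $q \ge 1$ and every compact $K \subset \R^{2d}$, so there is a weakly convergent subsequence $J_t^{n_k} \rightharpoonup \chi$ in $L^q(\Omega \times K)$. Revisiting Theorem \ref{strong 1!} and Corollary \ref{cor Z-Y} for the regularized problems (all the relevant constants are uniform in $n$), one obtains $Z_t^n \to Z_t$ in $L^q_{\mathrm{loc}}(\Omega \times \R^{2d})$. For any $\psi \in C_c^\infty(\R^{2d})$, the elementary identity $\int_{\R^{2d}} J_t^n \, \psi \, dz = -\int_{\R^{2d}} Z_t^n \, D\psi \, dz$ passes to the limit on both sides and identifies $\chi$ with $D_z Z_t$ in the distributional sense, so $D_z Z_t \in L^q_{\mathrm{loc}}(\Omega \times \R^{2d})$ for every $q \ge 1$. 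The main technical obstacle is the uniform-in-$n$ exponential integrability of $\int_0^T |D^2 U_n(Z_s^n)|^2 ds$: this is where the integrability threshold $p > 6d$ is critical and where the degeneracy of the noise forces us to rely on the Girsanov/Ornstein--Uhlenbeck reduction encapsulated in Lemma \ref{lem Z-Lp} rather than on any direct Sobolev embedding.
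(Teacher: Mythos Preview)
Your argument is correct, but it takes a different route from the paper. The paper works directly with difference quotients of the \emph{limiting} flow: it sets $\theta_t^h(z) = h^{-1}(\phi_t(z+he)-\phi_t(z))$ and the companion quotient $\xi_t^h$ built from $\gamma = \mathrm{id}+U$, applies It\^o's formula to $e^{-C_pA_t}|\xi_t^h|^p$ with the process $A_t$ of Lemma \ref{lemma exp At}, and obtains the bound $\sup_{z,h}\E[|\theta_t^h|^p]\le C$ uniformly in $h$; existence of the weak derivative then follows from \cite[Corollary 3.5]{BF13}. Your approach instead mollifies $F$, works with the genuine Jacobians $J_t^n$ of the smooth flows, bounds them uniformly in $n$ by the same exponential-weight mechanism, and passes to the limit via weak compactness plus convergence of $Z_t^n\to Z_t$. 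Both routes rest on the same pillar --- the exponential integrability of $\int_0^T |DD_vU(Z_s)|^2\,\dd s$ coming from Lemma \ref{lem Z-Lp} (or equivalently Lemma \ref{lemma exp At}) under $p>6d$ --- and your scheme is in fact very close to what the paper does later in Lemma \ref{lemma Df} for the approximating flows. The paper's difference-quotient route is slightly more economical here because it avoids the stability step $Z_t^n\to Z_t$ (your reference to ``revisiting Theorem \ref{strong 1!}'' is really Lemma \ref{lemma convergenza SDE}); conversely, your approximation route has the advantage that one works with classical derivatives throughout and the passage to the limit is a clean weak-compactness argument. One minor point: when you write $|D^2U_n|$ in the exponential weight, the object that actually appears is $|DD_vU_n|$ (since $\widetilde\sigma_n R = D_vU_n$), which is exactly what \eqref{es41} controls in $L^p$; your argument is unaffected.
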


\begin{proof}  
{\it Step 1.} (Bounds on difference quotients) It is sufficient to prove the existence and regularity of $D_{z_i}  \phi_t$ for some fixed $i\in \{1, \dots ,2d\}$. We omit to write $i$ and set $e=e_i$. 

Introduce for every $h>0$ the stochastic processes
\begin{equation} \label{eq theta-xi}
\theta_t^h (z) = \frac{\phi_t(z+he) - \phi_t(z)}{h} \, ,\qquad  \qquad \xi_t^h (z) = \frac{ \gamma\big(\phi_t(z+he)  \big) -  \gamma\big( \phi_t(z) \big) }{h} \, ,
\end{equation}
where $\gamma (z)= z+U(z)$ as in \eqref{def phi}. It is clear that they have finite moments of all orders because $\phi$ and $\gamma(\phi)$ do. The two processes are also equivalent in the sense that there exist constants $C_1,C_2$ such that
\begin{equation}\label{xi-theta}
 C_1 | \xi_t^h (z) |  \le |\theta_t^h (z) |  \le C_2 | \xi_t^h (z) | \,. 
 \end{equation}
This follows from \eqref{dif}. To fix the ideas, consider the case $i>d$. We have
\begin{align}
 \xi_t^h &= e + \frac{1}{h} \Big[ U(z+he) - U(z) \Big]    \label{xi-h}    \\  
& \quad +  \int_0^t \frac{\lambda}{h}\Big[ U\big( \phi_s (z+he) \big) - U\big(   \phi_s (z) \big)\Big] + A \theta_t^h (z) \ \dd s     \nonumber    \\
&\quad + \frac{1}{h} \int_0^t \Big[ D_v U\big(  \phi_s (z+he) \big)  - D_v U\big(   \phi_s (z) \big)  \Big] R \cdot \dd W_s \, .    \nonumber      
\end{align} 
Proceeding as in the proof of Theorem \ref{strong 1!} above we have
\begin{align*}
\dd \big| \xi_t^h \big|^p & \le p \big| \xi_t^h \big|^{p-2} \xi_t^h \cdot \Big( \frac{\lambda}{h} \big[ U\big( \phi_t (z+he) \big) - U\big(   \phi_t (z) \big) \big] + A \theta_t^h (z) \Big) \, \dd t \\
& \quad + \frac{p}{h} \big| \xi_t^h \big|^{p-2} \xi_t^h \cdot \Big[ D_v U\Big(  \phi_t (z+he) \Big)  - D_v U\Big(   \phi_t (z) \Big)  \Big]  R \cdot \dd W_t \\
&\quad + \frac{C_{p,d}}{h^2}  \big| \xi_t^h \big|^{p-2} \Big\| D_v U\Big(  \phi_t (z+he) \Big)  - D_v U\Big(   \phi_t (z) \Big) R \Big\|_{HS}^2 \dd t \\
&= p \big| \xi_t^h \big|^{p-2} \xi_t^h \cdot  \Big( \frac{\lambda}{h} \big[ U\big( \phi_t (z+he) \big) - U\big(   \phi_t (z) \big) \big] + A \theta_t^h (z) \Big) \dd t \\
&\quad + \dd M_t^h +   C_{p,d}  \big| \xi_t^h \big|^{p-2} \big| \theta_t^h \big|^2 \, \dd A_t \, ,
\end{align*}
where the process $A_t$ is defined as in \eqref{def At}, but with $Z=\phi(z+he)$ and $Y=\phi(z)$, and for every $h>0$, $\dd M_t^h$ is the differential of a martingale because $DU$ is bounded and $\xi_t^h$ has finite moments. Setting  $C_p = (C_2)^2 C_{p,d}$ we get
\begin{align}
\dd \Big( e^{- C_p A_t} \big| \xi_t^h \big|^p \Big) &= - C_p e^{-C_p A_t} \big| \xi_t^h \big|^p \dd A_t 
+ e^{- C_p A_t} \dd \big( \big| \xi_t^h \big|^p \big) \label{bound derivative}\\
& \hspace{-22mm} \le e^{-C_p A_t} p \big| \xi_t^h \big|^{p-2} \xi_t^h \cdot \Big( \frac{\lambda}{h} \big[ U\big( \phi_t (z+he) \big) - U\big(   \phi_t (z) \big) \big] + A \theta_t^h (z) \Big) \dd t +  e^{-C_p A_t} \dd M_t^h \, .  \nonumber
\end{align}
After integrating and taking expectations we find
\begin{align*}
\E \Big[  e^{- C_p A_t} \big| \xi_t^h \big|^p \Big] & \le \Big| e + \frac{1}{h} \big[ U(z+he) - U(z) \big] \Big|^p \\
& \hspace{-7mm} + \int_0^t \E \Big[ e^{-C_p A_s} p \big| \xi_s^h \big|^{p-2}  \xi_t^h \cdot \Big( \frac{\lambda}{h} \big[ U\big( \phi_s (z+he) \big) - U\big(   \phi_s (z) \big) \big] + A \theta_s^h (z) \Big) \Big] \dd s \\
& \hspace{-15mm} \le C \big( 1+ \|DU\|_{L^\infty(\R^{2d})}^p \big)  +  \int_0^t  C\big(\lambda \|DU\|_{L^\infty(\R^{2d})} + |A| \big) \E \Big[ e^{-C_p A_s} p \big| \xi_s^h \big|^p \Big] \, \dd s \, .
\end{align*}
A similar estimate holds for the case $i\le d$. We now apply Gr\"onwall's inequality and proceeding as in the proof of Corollary \ref{cor Z-Y} we finally get that
\begin{equation}\label{bound theta}
\E \Big[ \big|\theta_t^h \big|^p \Big] \le C \E \Big[ \big|\xi_t^h \big|^p \Big] \le C_{p,d,T,\lambda} <\infty \, .
\end{equation}

{\it Step 2.} (Derivative of the Flow)
Remark that, due to the boundedness of $DU$, the bound \eqref{bound theta} is uniform in $h$ and $z$, and we get
\begin{equation}\label{stima theta}
\sup_{z\in \R^{2d}} \sup_{h\in(0,1]} \E \Big[ \big|\theta_t^h \big|^p \Big]  \le C_{p,d,T,\lambda} <\infty \, .
\end{equation} 
We can then apply \cite[Corollary 3.5]{BF13} and obtain the existence of the weak derivative for the flow $D\phi_t \in L^p_{loc}(\Omega\times \R^{2d})$.
\end{proof}

\begin{remark} 
Since the bound \eqref{bound theta} is also uniform in time, applying \cite[Theorem 3.6]{BF13} one would also get the existence of the weak derivative as a process $D\phi_t$ belonging to $L^p_{loc}( [0,T] \times \R^{2d})$ with probability one, and the weak convergence $\theta_t^h \rightharpoonup D\phi_t$ in $L^p_{loc}(\Omega\times[0,T]\times\R^{2d} )$.
\end{remark}

\section{Stochastic kinetic equation}\label{sec: SKE}

We present here results on the stochastic kinetic equation \eqref{SPDEintro}. The first result concerns existence of  solutions with a certain Sobolev regularity (see Theorem \ref{Theo regul}). The second one is about  uniqueness of solutions (see Theorem \ref{teo uniqueness}).

We will use the results of the previous sections together with results similar to the ones given in  \cite{FF13b} to approximate the flow associated to the equation of characteristics. We report them in the Appendix for the sake of completeness. To prove that some degree of Sobolev regularity of the initial condition is preserved  on has to deal with weakly differentiable solutions, according to the definition introduced in \cite{FF13b} for solutions of the stochastic transport equation. 

Recall that, as observed in Section \ref{sec preliminaries}, by point 2 of the next definition and Sobolev embedding, weakly differentiable solutions of the stochastic kinetic equation are a.s. continuous in the space variable, for every $t\in[0,T]$; this is in contrast with  the deterministic kinetic equation, where solutions can be discontinuous (see Proposition \ref{Prop 2 Appendix}). In the sequel, given a Banach space $E$ we denote by 
$C^0\big( [0,T] ; E   \big) $ the Banach space of all continuous functions from $[0,T]$ into $E$ endowed with the supremum norm.

\begin{definition}\label{def: weak diff sol} 
Assume that $F$ satisfies Hypothesis \ref{hyp-holder}. We say
that $f$ is a weakly differentiable solution of the stochastic kinetic equation \eqref{SPDEintro} if

\begin{enumerate}
\item $f:\Omega\times\left[  0,T\right]  \times\mathbb{R}^{2d}\rightarrow
\mathbb{R}$ is measurable, $\int_{\R^{2d}}f\left(  t,z\right)  \varphi\left(  z\right)
\dd z$ (well defined by property 2 below) is progressively measurable for each
$\varphi\in C_{c}^{\infty}\left(  \mathbb{R}^{2d}\right) ; $

\item $\PP\left(  f\left(  t,\cdot\right)  \in\cap_{r\geq1}W_{loc}^{1,r}\left(
\mathbb{R}^{2d}\right)  \right)  =1$ for every $t\in\left[  0,T\right]  $ and 
both $f$ and $D f$ are in $\cap_{r\geq1} C^0\big( [0,T] ; L^{r} (\Omega \times \R^{2d}) \big) ;$

\item setting $b(z) =  A\cdot z + B(z), $ $b:\R^{2d}\to \R^{2d}$, see \eqref{ABQ}, 
for every $\varphi\in C_{c}^{\infty}\left(  \mathbb{R}^{2d}\right)  $ and
$t\in\left[  0,T\right]  $, with probability one, one has%
\begin{align*}
&  \int_{\R^{2d}}f(t,z)  \varphi(z) \, \dd z + \int_{0}^{t}\int_{\R^{2d}} b(z)  \cdot D f(s,z) \varphi(z) \,  \dd z \dd s\\
&  =\int_{\R^{2d}}f_{0} (z)  \varphi(z) \, \dd z +  \sum_{i=1}^{d} \int_{0}^{t}\left(  \int_{\R^{2d}}f \left(  s,z\right)  \partial_{v_{i}}  \varphi\left(
z\right) \, \dd z \right)  \dd W_{s}^{i}\\
&  +\frac{1}{2}\int_{0}^{t}\int_{\R^{2d}}f(s,z) \Delta_v \varphi (z) \, \dd z \dd s \, .
\end{align*}
\end{enumerate}
\end{definition}

\begin{remark}
The process $s\mapsto Y_{s}^{i}:=\int_{\R^{2d}}f\left(  s,z\right)  \partial_{v_{i}} \varphi ( z)\, \dd z$ is progressively measurable by property 1 and $\int_{0}^{T}\left\vert Y_{s}^{i}\right\vert ^{2} \dd s < \infty$ $\PP$-a.s. by property 2, hence the It\^{o} integral is well defined.
\end{remark}

\begin{remark}
The term $\int_{0}^{t}\int_{\R^{2d}}b\left( z \right)  \cdot D f\left(
s,z\right)  \varphi\left(  z\right) \, \dd z \dd s$ is well defined with probability
one because of the integrability properties of $b$ (assumptions) and $D f$ (property 2).
\end{remark}

In the next result the inverse of $\phi_t$ will be denoted by $\phi_0^t$.

\begin{theorem}\label{Theo regul}
If $F$ satisfies Hypothesis \ref{hyp-holder} and $f_{0}\in \cap_{r\geq1} W^{1,r} ( \mathbb{R}^{2d})$, then $f\left(  t,z\right)  :=f_{0}\left(  \phi_{0}^{t}(z) \right)  $ is a weakly differentiable solution of the stochastic kinetic equation \eqref{SPDEintro}.
\end{theorem}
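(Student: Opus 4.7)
I verify the three defining conditions of Definition \ref{def: weak diff sol} for the candidate $f(t,z):=f_{0}(\phi_{0}^{t}(z))$, where $\phi_{0}^{t}=(\phi_{t})^{-1}$ is the inverse of the stochastic flow of homeomorphisms from Theorem \ref{teo-flow}. Property 1 is immediate from the joint continuity in $(t,z,\omega)$ of the flow (Theorem \ref{teo continuity}) and hence of its inverse.

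For property 2 the strategy is to transport the Sobolev regularity of $f_{0}$ through the flow. Applying the argument of Theorem \ref{teo SDE-differentiable flow} to the time-reversed SDE (whose drift has the same structure and satisfies hypotheses of the same type) yields a weak derivative $D_{z}\phi_{0}^{t}\in L^{p}_{loc}(\Omega \times \R^{2d})$ for every $p\ge 1$. The chain rule then gives, formally,
\[
D_{z}f(t,z)=Df_{0}(\phi_{0}^{t}(z))\,D_{z}\phi_{0}^{t}(z),
\]
which I would justify by taking $f_{0}^{\varepsilon}\in C_{c}^{\infty}$ with $f_{0}^{\varepsilon}\to f_{0}$ in every $W^{1,r}$ and passing to the limit, using Lemma \ref{lem Z-Lp} (applied to the inverse flow, whose law is absolutely continuous with controlled density by Theorem \ref{gi1}(iii)) to bound the composition operator $g\mapsto g\circ \phi_{0}^{t}$ in $L^{r}_{loc}$. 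Time continuity of $f$ and $Df$ follows from the time continuity of the flow (Theorem \ref{teo continuity}) combined with the uniform-in-$t$ $L^{p}$ bounds coming from Theorem \ref{teo SDE-differentiable flow}.

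For property 3 the natural route is regularization. Mollify $F$ to obtain $F^{n}\in C_{b}^{\infty}$ with $F^{n}\to F$ in $L^{p}(\R^{d}_{v};H^{s}_{p}(\R^{d}_{x}))$, and $f_{0}^{n}\in C_{c}^{\infty}\to f_{0}$ in $\cap_{r}W^{1,r}$. The SDE with drift $F^{n}$ generates a $C^{\infty}$ stochastic flow $\phi^{n}_{t}$, and the classical It\^o--Kunita formula shows that $f^{n}(t,z):=f_{0}^{n}((\phi^{n})_{0}^{t}(z))$ satisfies the weak identity of item 3 of Definition \ref{def: weak diff sol} with $F$ and $f_{0}$ replaced by $F^{n}$ and $f_{0}^{n}$. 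Stability of the flow under this approximation (the stability Lemma \ref{lemma: stability PDE} fed into the contraction argument of Theorem \ref{strong 1!}) gives $\phi^{n}\to\phi$ and $(\phi^{n})_{0}^{t}\to\phi_{0}^{t}$ in $L^{r}_{loc}(\Omega\times\R^{2d})$, hence $f^{n}\to f$, and using the uniform-in-$n$ estimate on $D\phi^{n}$ arising from the proof of Theorem \ref{teo SDE-differentiable flow} (which only uses \eqref{stima nablaU}, and is preserved by mollification), also $Df^{n}\to Df$ in $L^{r}_{loc}(\Omega\times\R^{2d})$. The initial-time, final-time and Laplacian-in-$v$ terms of the weak identity then converge by dominated convergence, and the It\^o integral converges in $L^{2}(\Omega)$ by It\^o isometry.

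The main obstacle will be passing to the limit in the singular drift term
\[
\int_{0}^{t}\!\!\int_{\R^{2d}} F^{n}(z)\cdot D_{v}f^{n}(s,z)\,\varphi(z)\,\dd z\,\dd s,
\]
since $F^{n}\to F$ only in $L^{p}(\R^{d}_{v};H^{s}_{p}(\R^{d}_{x}))$ while $D_{v}f^{n}$ is controlled only in $L^{r}_{loc}$, so no direct H\"older bound suffices. I would rewrite this expression by a change of variables $z=\phi^{n}_{s}(y)$, bringing $F^{n}$ onto the trajectory of the flow, and then apply Lemma \ref{lem Z-Lp} together with Theorem \ref{gi1}(iii) to obtain
\[
\E\!\int_{0}^{t}\!\!\int_{\R^{2d}}|F^{n}(Z^{n,y}_{s})-F(Z^{y}_{s})|\,|Df_{0}(y)|\,|\varphi(\phi^{n}_{s}(y))|\,\dd y\,\dd s\to 0.
\]
Combined with the uniform $L^{r}_{loc}$ bounds on $D\phi^{n}$, this yields convergence of the singular drift term and completes the verification of item 3.
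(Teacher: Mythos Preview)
Your overall architecture---approximate $F$ and $f_{0}$ by smooth data, write the weak identity for the smooth solutions, pass to the limit---is the same as the paper's. The gap is in the limiting step for the gradients.

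You assert that the uniform-in-$n$ bound on $D\phi^{n}$ coming from the proof of Theorem \ref{teo SDE-differentiable flow} yields $Df^{n}\to Df$ \emph{strongly} in $L^{r}_{loc}(\Omega\times\R^{2d})$. It does not: uniform $L^{r}$ bounds on $Df^{n}$, together with $f^{n}\to f$, give only \emph{weak} convergence of $Df^{n}$ to $Df$ (this is exactly the content of the lemma from \cite{FF13b} that the paper invokes). Strong convergence of $Df^{n}$ would require strong convergence of $D\phi^{n}$, which in turn would need second-derivative bounds on the flow that are not available at this regularity. Once you only have weak convergence of $Df^{n}$, the passage to the limit in
\[
\int_{0}^{t}\!\!\int_{\R^{2d}} b_{n}(z)\cdot Df^{n}(s,z)\,\varphi(z)\,\dd z\,\dd s
\]
has to be organized differently. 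The paper does this by testing the whole identity against a bounded random variable $Y$ and taking expectations (a ``very weak'' formulation), so that the drift term becomes a pairing of the fixed function $Y\varphi\, b\in L^{p}$ with $Df^{n}$, and weak convergence suffices.

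You also misidentify where the obstruction lies: you write that ``no direct H\"older bound suffices'' for the $F^{n}$-part, but in fact $F^{n}\to F$ strongly in $L^{p}(\R^{2d})$ (since $L^{p}(\R^{d}_{v};H^{s}_{p}(\R^{d}_{x}))\subset L^{p}(\R^{2d})$), so H\"older against the uniform $L^{p'}$ bound on $Df^{n}$ handles $\int (F^{n}-F)\cdot D_{v}f^{n}\,\varphi$ directly. The genuine difficulty is the other half, $\int b\cdot (Df^{n}-Df)\,\varphi$, and that is precisely where only weak convergence is available. Your proposed change of variables $z=\phi^{n}_{s}(y)$ does not circumvent this: it introduces the Jacobian $J_{\phi^{n}_{s}}(y)$ and the factor $[D\phi^{n}_{s}(y)]^{-1}$ (since $D_{v}f^{n}(s,\phi^{n}_{s}(y))=Df_{0}^{n}(y)\,[D\phi^{n}_{s}(y)]^{-1}\!\mid_{v\text{-rows}}$), both of which you omit, and it leaves you with the difference $F(Z^{n,y}_{s})-F(Z^{y}_{s})$ for a merely $L^{p}$ function $F$, which is not controlled by $|Z^{n,y}_{s}-Z^{y}_{s}|$.

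In short: replace the unjustified strong-convergence claim by the weak-convergence argument, test the identity against a bounded random variable before passing to the limit, and split the drift term as $I_{n}^{(1)}+I_{n}^{(2)}$ with $I_{n}^{(1)}$ handled by H\"older and $I_{n}^{(2)}$ by weak convergence plus Vitali.
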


\begin{proof}
The proof follows the one of \cite[Theorem 10]{FF13b}. We divide it into several steps.

\medskip

\textbf{Step 1} (preparation). The random field $(\omega,t,z)
\mapsto f_{0}\left(  \phi_0^t(z) (\omega) \right)  $ is jointly
measurable and $( \omega,t)  \mapsto\int_{\R^{2d}}f_0 \left( \phi_0
^t(z)(\omega) \right)  \varphi\left(z\right) \,\dd z$ is progressively
measurable for each $\varphi\in C_{c}^{\infty}(  \mathbb{R}^{2d})
$. Hence part 1 of Definition \ref{def: weak diff sol} is true. To prove part 2 and 3 we approximate $f\left(  t,z\right)  $ by smooth fields $f_{n}\left(  t,z\right)  $.

Let $f_{0,n}$ be a sequence of smooth functions which converges to $f_{0}$ in $W^{1,r}(\R^{2d})$, for any $r \ge 1$, and so  uniformly on $\mathbb{R}^{2d}$ by the Sobolev embedding. This can be done for instance by using standard convolution with mollifiers. Moreover suppose that $F_n$ are smooth approximations converging to $F$ in $L^p(\R^{2d})$ ($p$ is given in Hypothesis \ref{hyp-holder}), let $\phi_{t,n}$ be the regular stochastic flow generated by the SDE \eqref{eq Z2} where $B$ is replaced by $B_n = R F_n$ and let $\phi_{0,n}^t$ be the inverse flow.  Then
$f_{n}\left( t,z \right)  :=  f_{0,n}\left(  \phi_{0,n}^{t}\left( z \right) \right)$
 is a smooth solution of
$$ 
 \dd f_n = - \big( v\cdot D_x f_n + F_n \cdot D_v f_n \big) \, \dd t - D_v f_n \circ \dd W_t  \, 
$$ 
 and thus for every $\varphi\in C_{c}^{\infty}(  \mathbb{R}^{2d})  $, $t\in\left[  0,T\right]  $ and bounded r.v. $Y$, it satisfies  
\begin{align}
  \E \bigg[  Y & \int_{\R^{2d}}f_{n} (  t,z )  \varphi ( z ) \, \dd z \bigg]
+ \E \left[  Y\int_{0}^{t}\int_{\R^{2d}}b_{n}\left( z \right)  \cdot D f_{n}\left(
s,z\right)  \varphi\left(  z\right)  \dd z \dd s \right]     \nonumber \\
&  =  \E \Big [ Y \int_{\R^{2d}}f_{0,n}\left(  z\right)  \varphi\left(  z\right)  \dd z \Big]+  \sum_{i=1}
^{d}  \E \left[  Y\int_{0}^{t}\left(  \int_{\R^{2d}}f_{n}\left(  s,z\right)  \partial
_{v_{i}}\varphi\left(  z\right)  \dd z \right)  \dd W_{s}^{i} \right]     \nonumber \\
&\hspace{5mm} +\frac{1}{2} \E \left[  Y\int_{0}^{t}\int_{\R^{2d}}f_{n}\left(  s,z\right)
\Delta_v \varphi\left(  z\right)  \dd z \dd s \right]  .     \label{very weak formulation}%
\end{align}
We shall pass to the limit in each one of these terms. We are forced to use
this very weak convergence due to the term
\begin {equation} \label{div1}
\E \left[  Y \int_{0}^{t}\int_{\R^{2d}}b_{n}\left( z\right)  \cdot D f_{n}\left(
s,z\right)  \varphi\left(  z\right)  \dd z \dd s \right],
\end{equation}
where we may only use weak convergence of $D f_{n}$.

\medskip

\textbf{Step 2} (convergence of $f_{n}$ to $f$). We claim that, uniformly in $n$ and for every $r\ge1$,
\begin{align}
\sup_{t\in[0,T]} \int_{\R^{2d}} \E \Big[ |f_n(t,z)|^r \Big] \dd z \le C_r \ , \label{stim unif un}\\
\sup_{t\in[0,T]} \int_{\R^{2d}} \E \Big[ |D f_n(t,z)|^r \Big] \dd z \le C_r \ . \label{sima unif nabla un}
\end{align}
Let us show how to prove the second bound; the first one can be obtained in the same way. The key estimate is the bound \eqref{property 2} on the derivative of the flow, which is proved in Appendix.
We use the representation formula for $f_n$ and the H\"older inequality to obtain
\begin{equation*}
 \bigg(\int_{\R^{2d}}   \E \Big[ |D f_n(t,z)|^r \Big] \dd z \bigg)^2   \le   \sup_{z\in \R^{2d}} \E \big[ | D \phi_{0,n}^t(z)|^{2r} \big]    \int_{\R^{2d}}  \E \Big[ \big| D f_{0,n} \big( \phi_{0,n}^{t}(z) \big) \big|^{2r} \Big]  \dd z \, .
\end{equation*}
The first term on the right-hand side can be uniformly bounded using Lemma \ref{lemma Df}. Also the last integral can be bounded uniformly: changing variables (all functions are regular) we get 
\begin{equation*}
\int_{\R^{2d}}   \E \Big[ \big| D f_{0,n} \big( \phi_{0,n}^{t}(z) \big) \big|^{2r} \Big]  \dd z = \int_{\R^{2d}}   \big| D f_{0,n} (y) \big|^{2r} \, \E\big[  J_{\phi_{t,n}}(y)  \big]  \dd y \, ,  
\end{equation*}
where $J_{\phi_{t,n}} (y) $ is the Jacobian determinant of $\phi_{t,n}(y)$. Then we conclude using again the H\"older inequality, (\ref{property 2}) and the boundedness  of $(f_{0,n})$ in $W^{1,r}(\R^{2d})$ (for every $r\ge1$). Remark that all the bounds obtained are uniform in $n$ and $t$.

We can now consider the convergence of $f_n$ to $f$. Let us first prove that, given $t\in\left[  0,T\right]  $ and $\varphi\in C_{c}^{\infty}(\mathbb{R}^{2d})  $,
\begin{equation}
\PP-\lim_{n\rightarrow\infty}\int_{\mathbb{R}^{2d}} f_{n}\left(  t,z\right)
\varphi\left(  z\right)  \dd z  =  \int_{\mathbb{R}^{2d}}  f\left(  t,z\right)
\varphi\left(  z\right)  \dd z \, \label{convergence 1}
\end{equation}
(convergence in probability).  
Using the representation formulas $f_n = f_{0,n}(\phi_{0,n}^t)$, $f = f_0(\phi_{0}^t)$ and
Sobolev embedding $W^{1,4d}\hookrightarrow C^{1/2}$  we have (Supp$(\varphi) \subset B_R$ where $B_R$ is the ball of radius $R>0$ and center $0$) 
\begin{align*}
\left\vert \int_{\mathbb{R}^{2d}}\left(  f_{n}\left(  t,z\right)  - f\left(
t,z\right)  \right)  \varphi\left(  z\right)  \dd z \right\vert  &  \leq\left\Vert
f_{0,n} - f_{0} \right\Vert _{L^{\infty}(\R^{2d})}\left\Vert \varphi\right\Vert _{L^{1}(\R^{2d})}\\
&  + C \left\Vert \varphi\right\Vert _{L^{\infty}(\R^{2d})}\int_{B_R
}\left\vert \phi_{0,n}^{t}\left(  z\right)  -\phi_{0}^{t}\left(  z\right)
\right\vert^{1/2} \dd z \, .  
\end{align*} 
The first term converges to zero by the uniform convergence of $f_{0,n}$ to 
$f_{0}$. From Lemma \ref{lemma convergenza SDE} we get 
\[
\lim_{n\rightarrow\infty}  \E \left[  \int_{B_R  }\left\vert
\phi_{0,n}^{t}\left(  z\right)  -\phi_{0}^{t}\left(  z\right)  \right\vert
\dd z \right]  = 0 \, ,
\]
and the convergence in probability (\ref{convergence 1}) follows.  
This allows to pass to the limit in the first	and in the last term of equation (\ref{very weak formulation}) using the uniform bound (\ref{stim unif un}) and the Vitali convergence theorem.  
Similarly, we can show that, given $\varphi\in C_{c}^{\infty}\left(
\mathbb{R}^{2d}\right)  $,
\begin{equation}
\PP-\lim_{n\rightarrow\infty}\int_{0}^{T}\left\vert \int_{\mathbb{R}^{2d}}\left(
f_{n}\left(  t,z\right)  - f\left(  t,z\right)  \right)  \varphi\left(
z\right)  \dd z \right\vert ^{2}  \dd t  = 0
\label{property 3b}
\end{equation}
and  allows to pass to the limit in the stochastic integral term of  (\ref{very weak formulation}). Hence, one can easily show   convergence of all terms in \eqref{very weak formulation} 
except for the one in \eqref{div1} 
which will be treated in Step 4. 
\smallskip

\textbf{Step 3} (a bound for  $f$). Let us prove property 2 of Definition
\ref{def: weak diff sol}. The key estimate is property \eqref{sima unif nabla un} obtained in the previous step.

Recall we have already obtained the convergence \eqref{convergence 1} and the uniform bound  \eqref{sima unif nabla un} on $D f_n$. We can then apply \cite[Lemma 16]{FF13b} which gives $\PP\big(f(  t,\cdot )  \in W_{loc}^{1,r}(\mathbb{R}^{2d}) \big)  =1$  for any  $r\geq1$ and $t\in\left[  0,T\right]  $, and
\[
\E \left[  \int_{B_R  }\left\vert D f\left(  t,z\right)
\right\vert ^{r}  \dd z \right]  \leq  \underset{n\rightarrow\infty}{\limsup}  \ \E \left[
\int_{B_R  }\left\vert D f_{n}\left(  t,z \right)
\right\vert ^{r}  \dd z \right]  \leq C_r\,,
\]
for every $R>0$ and $t\in[0,T]$. Hence, by monotone convergence we have
\begin{equation} \label{sima unif nabla u}
\sup_{t\in[0,T]} \E \left[  \int_{\R^{2d}  }\left\vert D f\left(  t,z\right)
\right\vert ^{r}  \dd z \right]  \leq C_r \, .
\end{equation}
A similar bound can be proved for $f$ itself using \eqref{stim unif un}, the convergence in probability \eqref{convergence 1} and the  Vitali convergence theorem.

\smallskip

\textbf{Step 4} (passage to the limit). Finally, we prove that we can 
pass to the limit in equation (\ref{very weak formulation}) and deduce that
$f$ satisfies property 3 of Definition \ref{def: weak diff sol}. 
 It remains to consider the term
$\E\big[  Y \int_{0}^{t}\int_{\R^{2d}}b_{n}\left(  s,z\right)  \cdot D f_{n}\left(
s,z \right)  \varphi\left( z \right)  \dd z \dd s \big]  $.  
Since $F_{n}\rightarrow F$ in $L^{p}(  \mathbb{R}^{2d})  $, it is
sufficient to use a suitable weak convergence of $D f_{n}$ to $D f$. Precisely, for $t \in [0,T]$,
\begin{align*}
 \E\Big[  Y \int_{0}^{t}\int_{\R^{2d}} & b_{n}\left( z \right)  \cdot D f_{n}\left(
s,z \right)  \varphi\left(  z \right)  \dd z \dd s \Big] \\
& -  \E \left[  Y  \int_{0}^{t}\int_{\R^{2d}} 
b\left( z \right)  \cdot D f\left(  s,z \right)  \varphi\left( z \right)
\dd z \dd s \right]  =   I_{n}^{\left(  1\right)  }\left(  t\right)  +   I_{n}^{\left(  2\right)
}\left(  t\right) \ ; \\
I_{n}^{\left(  1\right)  }\left(  t\right)    & = \E \left[  Y \int_{0}^{t}%
\int_{\R^{2d}} \big(  F_{n}\left( z \right)  - F\left( z \right)  \big)
\cdot D_v f_{n}\left(  s,z \right)  \varphi\left( z \right)  \dd z \dd s \right] ;  \\
I_{n}^{\left(  2\right)  }\left(  t\right)    & = \E \left[  Y \int_{0}^{t}
\int_{\R^{2d}} \varphi\left(  z \right)  b\left( z \right)  \cdot   \big( Df_{n}\left( s,z \right) - D f \left( s,z \right)  \big)  \dd z \dd s \right]  .
\end{align*}
We have to prove that both $I_{n}^{\left(  1\right)  }\left(  t\right)  $ and
$I_{n}^{\left(  2\right)  }\left(  t\right)  $ converge to zero as
$n\rightarrow\infty$.  By the H\"{o}lder inequality, for all $t\in[0,T]$
\[
I_{n}^{\left(  1\right)  }\left(  t\right)  \leq C\left\Vert F_{n}
- F \right\Vert _{L^{p}(  \mathbb{R}
^{2d}) } \, \sup_{t \in [0,T]}  \E \left[  \big\| D f_{n} (t, \cdot) \big\|_{L^{p^{\prime}}(  \R^{2d} ) }\right] 
\]
where $1/p+1/p^{\prime}=1$ and $C = C_{Y, T, \varphi}$. Thus, from (\ref{sima unif nabla un}), $I_{n}^{\left(  1\right)  }\left(
t\right)  $ converges to zero as $n \to \infty$.
Let us treat $I_{n}^{\left(  2\right)  }\left(  t\right)  $. Using the
integrability properties shown above we can change the order of integration.  The function%
\[ 
h_{n}\left(  s\right)  :=  \E \left[  \int_{\R^{2d}}Y \varphi\left( z \right)   b \left(
z \right)  \cdot   \big(  D f_{n}\left( s,z \right) - D f \left( s,z \right)  \big) \, \dd z \right], \;\; s \in [0,T],
\]
converges to zero as $n\rightarrow\infty$ for almost every $s$ and satisfies
the assumptions of the Vitali convergence theorem (we shall prove these two claims
in Step 5 below). Hence $I_{n}^{\left(  2\right)  }\left(  t\right)  $
converges to zero. 

Now we may pass to the limit in equation (\ref{very weak formulation}) and 
from the arbitrariness of $Y$ we obtain property 3 of Definition \ref{def: weak diff sol}.

\medskip

\textbf{Step 5} (auxiliary facts). We have to prove the two properties of $h_{n}(  s)  $ claimed in Step 4.  
For every $s\in [0,T]$ \cite[Lemma 16]{FF13b} gives
\begin{equation}
\E \left[  \int_{\mathbb{R}^{2d}}\partial_{z_{i}} f\left( s,z \right)
\varphi\left( z\right)  Y \, \dd z \right]  =  \lim_{n\rightarrow\infty}  \E \left[
\int_{\mathbb{R}^{2d}} \partial_{z_{i}} f_{n}\left(  s,z\right)  \varphi\left(z\right)  
Y  \, \dd z \right] \, , \label{convergence 4}%
\end{equation}
for every $\varphi\in C_{c}^{\infty}( \mathbb{R}^{2d})$ and
bounded r.v. $Y$. 
 Since the space $C_{c}^{\infty}(\mathbb{R}^{2d})$ is dense in $L^{p}(  \mathbb{R}^{2d})  $, 
we may extend the convergence property (\ref{convergence 4}) to all
$\varphi\in L^{p}(  \mathbb{R}^{2d})  $ by means of the bounds
(\ref{sima unif nabla un}) and (\ref{sima unif nabla u}), which proves the first claim.

Moreover, for every $\varepsilon>0$ there is a constant $C_{Y, \varepsilon}$ such that (Supp$(\varphi) \subset B_R$)
\begin{align*}
\sup_{n \ge 1}\int_{0}^{T}  h_{n}^{1+\varepsilon}(s) \, \dd s  
 \leq C_{Y,\varepsilon} \left\Vert b \varphi \right\Vert _{L^p}^{1+\varepsilon} \bigg\{ & \left(  \E \int_{0}^{T}\int_{B_R }\big\vert
D f_{n}\left( s,z \right) \big\vert ^{r} \dd z \dd s \right)^{\frac{1+\varepsilon}{r}} \\
& + \left(  \E \int_{0}^{T}\int_{B_R }\big\vert D f \left( s,z\right)  \big\vert ^{r}  \dd z \dd s \right)  ^{\frac{1+\varepsilon}{r}} \bigg\}%
\end{align*}
for a suitable $r$ depending on $\varepsilon$ (we have used H\"{o}lder
inequality; cf. page 1344 in \cite{FF13b}). The bounds (\ref{sima unif nabla un}) and (\ref{sima unif nabla u})
 imply that $\int_{0}^{T}h_{n}%
^{1+\varepsilon}\left(  s\right)  \dd s$ is uniformly bounded, and the Vitali
theorem can be applied. The proof is complete. \ \ \ 
\end{proof}

\smallskip

We now present the uniqueness result for weakly differentiable solutions. The proof seems to be of independent interest.

\begin{theorem}\label{teo uniqueness}
If $F$ satisfies Hypothesis \ref{hyp-holder} and, moreover, $\mathrm{div}_v F \in L^\infty(\R^{2d})$ ($\mathrm{div}_v F$ is understood in distributional sense)
 weakly differentiable solutions are unique.
\end{theorem}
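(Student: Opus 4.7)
The plan is to reduce, by linearity, to showing that any weakly differentiable solution $f$ with vanishing initial condition must vanish, and then to run an energy estimate on $f^2$ that exploits the additional hypothesis $\mathrm{div}_v F \in L^\infty$ to absorb the transport term via integration by parts (recall $\mathrm{div}_z b = \mathrm{div}_v F$, since $\mathrm{div}_x v = 0$). I organize the argument into three stages: mollification combined with It\^o's formula applied to the square, a DiPerna--Lions-type commutator passage $\epsilon \to 0$, and a spatial-cutoff passage $R\to \infty$ followed by Gronwall.

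First I mollify $f$ in $z$ using a standard kernel $\rho^\epsilon$. Testing the weak formulation of Definition \ref{def: weak diff sol} against $\rho^\epsilon(z-\cdot)\in C_c^\infty(\R^{2d})$ gives, for a.e.\ $z$ and almost surely, the It\^o SPDE
\begin{equation*}
df^\epsilon = \bigl( -b\cdot D f^\epsilon - r^\epsilon + \tfrac12\Delta_v f^\epsilon \bigr) \dd t - D_v f^\epsilon \cdot \dd W_t,
\end{equation*}
where $b(z)=Az+B(z)$ as in \eqref{ABQ} and $r^\epsilon := (b\cdot Df)^\epsilon - b\cdot Df^\epsilon$ is the commutator. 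Since $f^\epsilon$ is smooth in $z$ and an It\^o process in $t$, the classical It\^o formula applied to $(f^\epsilon)^2$, together with the algebraic identity $2f^\epsilon\Delta_v f^\epsilon + 2|D_v f^\epsilon|^2 = \Delta_v(f^\epsilon)^2$, yields
\begin{equation*}
d(f^\epsilon)^2 = \bigl( -b\cdot D(f^\epsilon)^2 - 2f^\epsilon r^\epsilon + \tfrac12\Delta_v(f^\epsilon)^2\bigr)\dd t - D_v(f^\epsilon)^2\cdot \dd W_t.
\end{equation*}
I pair this identity with a cutoff $\varphi_R(z)=\psi(|z|/R)$, integrate by parts using $\mathrm{div}_z b=\mathrm{div}_v F\in L^\infty$, and take expectations to eliminate the martingale.

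The main obstacle is the commutator step. The classical DiPerna--Lions argument requires $b$ to be weakly differentiable, which fails here; instead all the Sobolev regularity lives on $f$, and the usual trade-off ``$Db\in L^1$ with $f\in L^\infty$'' is replaced by ``$Df\in L^{2p/(p-2)}$ with $b\in L^p$''. Since $f(t,\cdot)\in W^{1,r}_{\mathrm{loc}}$ for all $r\ge 1$ while $B\in L^p_{\mathrm{loc}}$ with $p>6d$, H\"older's inequality with conjugate exponents $(p,\tfrac{2p}{p-2})$ gives $b\cdot Df\in L^2_{\mathrm{loc}}$ almost surely, and standard mollifier properties ensure both $(b\cdot Df)^\epsilon\to b\cdot Df$ and $b\cdot Df^\epsilon\to b\cdot Df$ in $L^2_{\mathrm{loc}}$ almost surely, hence $r^\epsilon\to 0$ in $L^2_{\mathrm{loc}}$. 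The hypothesis $f,Df\in C^0([0,T];L^r(\Omega\times\R^{2d}))$ for every $r$ yields the uniform bound $\sup_\epsilon \E\|r^\epsilon\|^2_{L^2([0,T]\times B_{2R})}<\infty$, so Vitali's theorem lets me pass to the limit in $\E\int_0^t\int f^\epsilon r^\epsilon \varphi_R \dd z \dd s$; the other terms converge by strong $L^{r}$ convergence of $f^\epsilon$ to $f$.

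Finally I send $R\to\infty$. The terms containing $\Delta_v\varphi_R$ and $F\cdot D_v\varphi_R$ vanish because $\|D\varphi_R\|_\infty + \|\Delta_v\varphi_R\|_\infty\le C/R$, combined with the uniform H\"older estimate $\E\int f^2|F||D_v\varphi_R|\dd z \le (C/R)\|F\|_{L^p}\E\|f\|^2_{L^{2p/(p-1)}}$. The one delicate contribution is $\E\int f^2\, v\cdot D_x\varphi_R\, \dd z$, in which $|v\cdot D_x\varphi_R|$ is bounded only by a constant uniform in $R$; I handle it by observing that $v\cdot D_x\varphi_R\to 0$ pointwise and invoking dominated convergence against $f^2\in L^1(\Omega\times\R^{2d})$. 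What remains is
\begin{equation*}
\E\int f^2(t)\dd z \le \|\mathrm{div}_v F\|_\infty \int_0^t \E\int f^2(s)\dd z \dd s,
\end{equation*}
and Gronwall's inequality concludes.
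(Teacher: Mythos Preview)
Your proposal is correct and follows essentially the same approach as the paper: mollify, apply It\^o's formula to the square, control the commutator using the Sobolev regularity of $f$ (rather than of $b$), integrate by parts using $\mathrm{div}_z b=\mathrm{div}_v F\in L^\infty$, remove cutoffs, and apply Gronwall. The only tactical differences are that the paper first establishes that $f^2$ is itself a weakly differentiable solution and then takes expectations to reduce to a deterministic equation for $g=\E[f^2]$ before doing the cutoff passage, and that the paper uses separated cutoffs $\eta(x/n)\eta(v/m)$ (sending $n\to\infty$ first with $m$ fixed, so that $|v|\le Cm$ kills the awkward $v\cdot D_x$ term) rather than your single radial cutoff handled by dominated convergence; both choices work.
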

\begin{proof}
By linearity of the equation we just have to show that the only solution starting from $f_0 = 0$ is the trivial one. 
\medskip

{\bf Step 1 } ($f^2$ is a solution).   We  prove that for any solution $f$, the function $f^2$ is still a weak solution of the stochastic kinetic equation. Take test functions of the form $\varphi^n_\zeta (z) = \rho_n (\zeta - z)$, where $(\rho_n)_n$ is a family of standard mollifiers ($\rho_n$ has support in $B_{1/n}$). Let $\zeta = (\xi,\nu) \in \R^{2d}$, $f_n(t,\zeta) = \big(f(t,\cdot) \star \rho_n \big) (\zeta)$. By definition of  solution we get that, $\PP$-a.s.,
\begin{align*}
f_n(t,\zeta)  &+ \int_0^t  b(\zeta) \cdot D f_n (s,\zeta) \, \dd s + \int_0^t D_{v} f_n (t,\zeta)  \circ \dd W_s = \int_0^t R_n (s,\zeta) \, \dd s \, ,  \\
R_n (s,\zeta) &= \int_{\R^{2d}} \big( b(\zeta) - b(z) \big) \cdot D_{z}  f (s,z) \, \rho_n (\zeta-z) \  \dd z\,.
\end{align*}
The functions $f_n$ are smooth in the space variable. For any fixed $\zeta \in \R^{2d}$, by the It\^o formula we get
\begin{align*}
\dd f_n^2 & = 2 f_n \dd f_n \\
	& = -2  f_n b \cdot D f_n \, \dd t   - 2 f_n D_{v} 
	f_n  \circ \dd W_t + 2 f_n R_n \, \dd t \, .
\end{align*}
Now we multiply by   $\varphi \in C_c^{\infty}(\R^{2d})$ and integrate over $\R^{2d}$. Using the It\^o integral we  pass to the limit as $n \to \infty$ and find, $\PP$-a.s.,
\begin{gather}
\int_{\R^{2d}} \hspace{-2mm} f_n^2 (t,\zeta) \varphi (\zeta) \, \dd \zeta   
  - \frac{1}{2}\int_0^t \int_{\R^{2d}} \hspace{-2mm}  f_n^2 (s, \zeta)
  \triangle_v \varphi (\zeta)  \, \dd \zeta \dd s  +  
  \int_0^t \int_{\R^{2d}}  \hspace{-2mm} \varphi(\zeta)\, b(\zeta) \cdot D f_n^2 (s, \zeta) \,  \dd \zeta \dd s   \nonumber  \\
	 -  \int_0^t \int_{\R^{2d}} f_n^2(s, \zeta) D_v \varphi ( \zeta)\, \dd \zeta \cdot \dd W_s  =  2 \int_0^t \int_{\R^{2d}} f_n
(s, \zeta) \,  R_n(s, \zeta)  \varphi(\zeta) \, \dd \zeta \dd s \, .     \label{weak f2}
\end{gather}
Recall that 
$$
b(z) =  A\cdot z + \begin{pmatrix}    0  \\ F (z)   \end{pmatrix} \in \R^{2d}.
$$
Let us fix $t \in [0,T]$. By definition of weakly differentiable solution it is not difficult to  pass to the limit in probability as $n \to \infty$ in all the  terms in the left hand side of \eqref{weak f2}.
Indeed, we can use that,  for every $t\in[0,T]$, $r \ge 1$,  $f_n(t,\cdot) \to f(t,\cdot)$ in $W^{1,r}_{loc}(\R^{2d})$, $\PP$-a.s., together with the bounds 
\begin{align}
\sup_{t\in[0,T]} \int_{\R^{2d}} \E \Big[ |f_n(t,z)|^r \Big] \dd z \le C_r \ , \;\;\; 
\sup_{t\in[0,T]} \int_{\R^{2d}} \E \Big[ |D f_n(t,z)|^r \Big] \dd z \le C_r \ , \label{sera}
\end{align}
and the Vitali theorem.
 For instance, if Supp$(\varphi) \subset B_R$ we have
\begin{gather*}
J_n(t) = \E \int_0^t \int_{\R^{2d}} | f_n^2 (s, \zeta) -  f^2 (s, \zeta)|
  |\triangle_v \varphi (\zeta)|    \, \dd \zeta \dd s \\ \le 
C_{\varphi} \int_0^T  \, \E \int_{B_R} | f_n^2 (s, \zeta) -  f^2 (s, \zeta)|   \, \dd \zeta ds,
\end{gather*}
and, for any $s \in [0,T]$, $\PP$-a.e $\omega$, $ k_n (\omega, s)= \int_{B_R} | f_n^2 (\omega, s, \zeta) -  f^2 (\omega, s, \zeta)|   \, \dd \zeta  \to 0$ as $n \to \infty$.
 From \eqref{sera}  we deduce easily that 
$$
\sup_{n \ge 1} \E \Big[ \int_0^T k_n^2 (s) \, \dd s \Big] < \infty
$$
and so by the Vitali theorem we get
$
\int_0^T  \, \E \big[ \int_{B_R} | f_n^2 (s, \zeta) -  f^2 (s, \zeta)|   \, \dd \zeta \big] \dd s \to 0
$, 
as $n \to \infty$, which implies  $\lim_{n \to \infty} J_n (t)=0$. In order to show that  
$$
\E \Big[ \int_0^t \int_{\R^{2d}} | f_n
(s, \zeta) \,  R_n(s, \zeta)  \varphi(\zeta)| \, \dd \zeta \dd s \Big] 
\le C_{\varphi} \E \Big[ \int_0^t \int_{B_{R}} | f_n
(s, \zeta) \,  R_n(s, \zeta) | \, \dd \zeta \dd s \Big]   \to 0 
$$
as $ n \to \infty $, it is enough to prove that for fixed $\omega$, $\PP$-a.s., and $s \in [0,T]$ we have 
\begin{equation} \label{vita}
 \int_{B_{R}} | f_n
(s, \zeta) \,  R_n(s, \zeta) | \, \dd \zeta  \,
\to 0 \,, 
\end{equation} 
as $n \to \infty$. Indeed once \eqref{vita} is proved, using the bounds  \eqref{sera} and the H\"older inequality we get 
\begin{equation*}
\sup_{n \ge 1} \E \Big[ \int_0^t \Big| \int_{B_{R}} \big| f_n
(s, \zeta) \,  R_n(s, \zeta) \big| \, \dd \zeta \Big|^2 \dd s \Big]
\le C_R \sup_{n \ge 1} \E \Big[ \int_0^t \int_{B_{R}} | f_n
(s, \zeta) \,  R_n(s, \zeta) |^2 \, \dd \zeta \dd s \Big] < \infty \, .
\end{equation*}
Thus we can apply the Vitali theorem and deduce the assertion. 
Let us check \eqref{vita}.  

By Sobolev regularity of weakly differentiable solutions we know that $$
\sup_{n \ge 1} \sup_{\zeta \in B_R}| f_n (s, \zeta)|
= M < \infty \, .
$$
Hence it is enough to prove that 
 $\int_{B_{R}} |  R_n(s, \zeta) |  \dd \zeta  
\to 0 $. 
 Recall that 
$$
R_n (s,\zeta) =  b(\zeta)  \cdot D  f_n (s, \zeta) - 
 [(b   \cdot D  f (s, \cdot)) * \rho_n] (s, \zeta) \,.
$$
Using the fact that $b \in L^p_{loc}(\R^{2d})$, with $p$ given in Hypothesis \ref{hyp-holder}, the H\"older inequality and basic properties of convolutions we have
\begin{gather*}
\int_{B_R} \big|  b(\zeta)  \cdot D  f_n (s, \zeta )  -
b(\zeta)  \cdot D  f (s, \zeta) \big| \, \dd \zeta \;\;  \to\; 0 \, ,
\\
\int_{B_R} \Big| 
\big[(b   \cdot D  f (s, \cdot)) * \rho_n \big] (s, \zeta) -
 b(\zeta)  \cdot D  f (s, \zeta) \Big| \, \dd \zeta \;\; \to\; 0 \, ,
\end{gather*}
as $n \to \infty$. This shows that \eqref{vita} holds. We have proved that  also $f^2$ is a weakly differentiable solution of the stochastic kinetic equation.

\medskip 
{\bf Step 2} ($f$ is identically zero). Due to the integrability properties of $f$, the stochastic integral in It\^o's form is a martingale; it follows that  the function $g(t,z) = \E [f^2(t,z) ]$  belongs to $C^0 ([0,T] ; W^{1,r} (\R^{2d}))$ for any $r\ge1$ and satisfies, for any $\varphi \in C_c^{\infty}(\R^{2d})$,
$$
\int_{\R^{2d}} g(t,z) \varphi(z) \, \dd z + \int_0^t \int_{\R^{2d}} b(z) \cdot D g(s,z) \varphi(z) \, \dd z \dd s = \frac{1}{2} \int_0^t \int_{\R^{2d}} g(s,z) \Delta_v \varphi(z) \, \dd z \dd s \, .
$$
We have, for any $s \in [0,T]$,
\begin{align*}
\int_{\R^{2d}} b(z) \cdot D g(s,z) \varphi(z) \, \dd z
&=
\int_{\R^{2d}} v \cdot D_x g(s,z) \varphi(z) \, \dd z
+
\int_{\R^{2d}} F(z) \cdot D_v g(s,z) \varphi(z) \, \dd z \\
& \hspace{-5mm} = -\int_{\R^{2d}} v \cdot D_x \varphi (z) g(s,z)  \, \dd z
+
\int_{\R^{2d}} F(z) \cdot D_v g(s,z) \varphi(z) \, \dd z.
\end{align*}
Now we fix $\eta \in C_c^{\infty}(\R^{d})$ such that 
$\eta = 1$ on the ball $B_1$ of center 0 and radius 1. By considering the   test functions:
$$
\varphi_{nm} (x,v) = \eta(x/n) \eta (v/m),\;\; (x,v) =z \in \R^{2d},
$$
$n, m \ge 1$, we obtain
\begin{gather*}
 \int_{\R^{2d}} g(t,z) \eta(x/n) \eta (v/m) \, \dd z 
 - \frac{1}{n}\int_0^t  \int_{\R^{2d}} \eta(v/m) v \cdot D \eta (x/n) g(s,z)  \, \dd z \dd s
\\ 
 + \hspace{-0.5mm}
 \int_0^t  \hspace{-0.5mm}
  \int_{\R^{2d}} \hspace{-2mm}  F(z) \cdot D_v g(s,z) \eta(x/n) \eta (v/m)  \dd z \dd s =  \frac{1}{2 m^2} \int_0^t \hspace{-0.5mm} \int_{\R^{2d}} \hspace{-2mm} \eta (x/n) g(s,z) \Delta \eta(v/m)  \dd z \dd s.
\end{gather*}
Now we fix $m \ge 1$ and pass to the limit as $n \to \infty$ by the Lebesgue theorem. We infer
\begin{gather*}
 \int_{\R^{2d}} g(t,z)  \eta (v/m) \, \dd z 
 + 
 \int_0^t 
  \int_{\R^{2d}} F(z) \cdot D_v g(s,z)  \eta (v/m) \, \dd z \dd s
\\ = \frac{1}{2 m^2} \int_0^t \int_{\R^{2d}}  g(s,z) \Delta \eta(v/m) \, \dd z \dd s \, .
  \end{gather*}
Passing to the limit as $m \to \infty$ we arrive at
$$
 \int_{\R^{2d}} g(t,z)  \, \dd z 
 =
 - \int_0^t  
  \int_{\R^{2d}} F(z) \cdot D_v g(s,z)  \, \dd z \dd s \, .
$$
Since in particular $g(t,z) \in C^0 ([0,T] ; W^{1,r} (\R^{2d}))$, with $r = \frac{p}{p-1}$, we obtain
$$
 \int_{\R^{2d}} g(t,z)  \, \dd z 
 =
  \int_0^t    \int_{\R^{2d}} \text{div}_v F(z)  g(s,z)  \, \dd z \dd s
   \le 
\| \text{div}_v F(z) \|_{\infty}    \int_0^t    \int_{\R^{2d}}  g(s,z)  \, \dd z \dd s \, .
$$
Applying the Gr\"onwall lemma we get that $g $ is identically zero and this proves uniqueness for the kinetic equation.
\end{proof}

\section{Appendix}

\begin{proof} [Proof of Proposition \ref{df3}]
  By \eqref{ma1} we only have to prove the first inclusion. 
 Let $f\in W^{s',p} (\R^{2d})$. Recall that 
$$
[f]^2_{ B^{s}_{p, 2}} =\int_{\R^{2d}} \frac{\dd h}{ |h|^{2d + 2 s } } \Big( \int_{\R^{2d}} 
 |f(x+h) - f(x)|^p \, \dd x
 \Big)^{2/p}  .
$$
We have with $\epsilon = \frac{s' -s}{2} >0$ 
\begin{align*}
\int_{|h|\le 1}& \frac{1}{ |h|^{2d + 2 s } } \Big( \int_{\R^{2d}} 
 |f(x+h) - f(x)|^p \, \dd x
 \Big)^{2/p}  \dd h \\
&= \int_{|h|\le 1} \frac{1}{ |h|^{ 2d (\frac{p - 2} {p})  - 
\epsilon} } \Big( \int_{\R^{2d}} 
 \frac{|f(x+h) - f(x)|^p} { |h|^{(\frac{2}{p} 2d +  2s + \epsilon) \frac{p}{2} } } \dd x
 \Big)^{2/p} \dd h \\
&\le c  \Big ( \int_{|h|\le 1}   \dd h \int_{\R^{2d}} 
 \frac{|f(x+h) - f(x)|^p} { |h|^{2d +  (s + \epsilon/2) p  } } \, \dd x
   \Big)^{2/p}
 \, \cdot \, 
\Big (\int_{|h|\le 1} \frac{1}{ |h|^{2d - \, 
 \frac{\epsilon p}{p-2} } } \, \dd h \Big)^{\frac{p-2}{p}} \\
&\le C  [f]^2_{W^{s + \frac{\epsilon}{2},p } }
\end{align*}
Similarly, we have with $0<\epsilon < \min (2 s, \frac{s'-s}{2}) $
\begin{align*}
\int_{|h| > 1} &\frac{1}{ |h|^{2d + 2 s } } \Big( \int_{\R^{2d}} 
 |f(x+h) - f(x)|^p \, \dd x   \Big)^{2/p} \dd h \\
& = \int_{|h|> 1} \frac{1}{ |h|^{ 2d (\frac{p - 2} {p})  + 
\epsilon} } \Big( \int_{\R^{2d}} 
 \frac{|f(x+h) - f(x)|^p} { |h|^{(\frac{2}{p} 2d +  2s - \epsilon) \frac{p}{2} } } \, \dd x
 \Big)^{2/p} \dd h \\
& \le c  \Big ( \int_{|h| > 1}   \dd h \int_{\R^{2d}} 
 \frac{|f(x+h) - f(x)|^p} { |h|^{2d +  (s - \epsilon/2) p  } } \, \dd x    \Big)^{2/p}
 \, \cdot \, 
\Big (\int_{|h|> 1} \frac{1}{ |h|^{2d + \, 
 \frac{\epsilon p}{p-2} } } \, \dd h \Big)^{\frac{p-2}{p}} \\
& \le C  [f]^2_{W^{s - \frac{\epsilon}{2},p } } \le C' \, \| f\|^2_{W^{s + \frac{\epsilon}{2},p } }
\end{align*}
(cf. \eqref{incl} and \eqref{rft}). The proof is complete.
 \end{proof}

\hh
\begin{proof} [Proof of Lemma \ref{lemma 2.4 Kunita}]
  Remark that, due to \eqref{dif}, 
$$
\Big(\varepsilon + \big|Z_{t}^z - Z_{t}^y\big|^2 \Big)^a \le C \Big(\varepsilon + \big|\gamma(Z_{t}^z) - \gamma(Z_{t}^y)\big|^2 \Big)^a \, .
$$
Therefore, we can prove \eqref{w INI flow} for $\gamma(Z_t)$ instead of $Z_t$.

We proceed as in \cite[Lemma II.2.4]{Ku84} or \cite[Lemma 5.4]{FF13a}. Fix any $t\in[0,T]$ and set for $z,y\in\R^{2d}$: $g(z):=f^a(z)$, $f(z):=(\varepsilon + |z|^2)$ and $\eta_t:=\gamma(Z_{t}^z)- \gamma(Z_{t}^y)$. Then, applying It\^o formula we obtain as in the proof of Lemma \ref{lemma 2.3 Kunita}
\begin{align*}
g(\eta_t) - g(\eta_0) 
=  & \, 2a \hspace{-0.5mm} \int_0^t  \hspace{-1mm} f^{a-1}(\eta_s)\,  \eta_s \cdot \Big[\widetilde  b\big(Z_{s}^z\big) - \widetilde b\big(Z_{s}^y\big) \Big] \, \dd s \\
& + 2a \hspace{-0.5mm}  \int_0^t  \hspace{-1mm} f^{a-1}(\eta_s) \, \eta_s \cdot \Big[ \widetilde\sigma\big(Z_{r}^z\big) - \widetilde\sigma\big(Z_{s}^y\big) \Big] \cdot \dd W_s \\
&+ a \sum_{i,j} \int_0^t f^{a-2}(\eta_s) \Big[ f(\eta_s) \delta_{i,j} + 2(a-1)\, \eta_s^i\eta_s^j \Big]\\
&\hspace{2cm} \times \Big[ \Big(\widetilde\sigma\big(Z_{s}^z\big) - \widetilde\sigma\big(Z_{s}^y\big) \Big)\Big( \widetilde\sigma\big(Z_{s}^z\big) - \widetilde\sigma\big(Z_{s}^y\big) \Big)^t\Big]^{i,j} \,\dd s \, .
\end{align*}
Recall that $|z|\le f^{1/2}(z)$ and that the coefficient $\widetilde b$ is Lipschitz continuous: 
$$|\widetilde b(z)-\widetilde b(y) |\le L |z-y| \le C |\gamma(z) - \gamma(y) | \le C f^{1/2}\big( |\gamma(z) - \gamma(y) | \big) \, .$$ 
We can continue with the estimates and obtain  
\begin{align} \label{Mar4}
g(\eta_t) - g (\eta_0) 
&\le 2C|a| \int_0^t f^a(\eta_s)\, \dd s + 2\,a\int_0^t f^{a-1}(\eta_s) \eta_s \Big[ \widetilde\sigma\big(Z_{s}^z\big) - \widetilde\sigma\big(Z_{s}^y\big) \Big] \, \dd W_s \\ 
&\hspace{0,5cm} + C_{a,d}  |a| \int_0^t f^{a-1}(\eta_s) 
\,|\eta_s|^2  \,\dd A_s \, .\nonumber 
\end{align}
Here, $A_t$ is the process introduced and studied in Lemma \ref{lemma exp At}:
 \begin{equation*}
 \int_0^t \big\|\widetilde{\sigma}(Z_{s}^{z}) - \widetilde{\sigma}(Z_{s}^{y})\big\|^2_{HS} \dd s = \int_0^t \big|Z_{s}^{z}-Z_{s}^{y}\big|^2 dA_s \, .
 \end{equation*}
The stochastic integral in (\ref{Mar4}) is a martingale with zero mean ($\widetilde \sigma$ is bounded). Proceeding as in \eqref{Ito 1!}, we get 
\begin{equation*}
 \E\Big[ e^{-A_t} g(\eta_t) \Big] - e^{-A_0}g(\eta_0) \le C_{a,d} \int_0^t \E\Big[ e^{-A_s}g(\eta_s)\Big] \,\dd s \, .
\end{equation*}
By Gr\"onwall's inequality applied to the function $h(t):=\E\big[e^{-A_t} g(\eta_t)\big]$, it follows
\begin{align}\label{Mar3}
\E\Big[ e^{-A_t} \Big(\varepsilon + \big|Z_{t}^z - Z_{t}^y\big|^2 \Big)^a\Big] &\le C \E\Big[ e^{-A_t} g(\eta_t) \Big] \le C_{a,d} \, g(\eta_0) = C_{a,d} \big( \varepsilon + |\gamma(z)-\gamma(y)|^2 \big)^a      \nonumber \\
&\le C_a \big( \varepsilon + |z-y|^2 \big)^a \, .
\end{align}
To complete the proof of the lemma, we manipulate (\ref{Mar3}) using H\"older's inequality and we conclude invoking Lemma \ref{lemma exp At} to bound the term $\E[e^{2A_T}]$: 
\begin{equation*}
\E\Big[  \Big(\varepsilon + \big|Z_{t}^z - Z_{t}^y\big|^2 \Big)^a\Big]^2 \le  \E\Big[ e^{2A_t}\Big] \E\Big[ e^{-2A_t} g^2(\eta_t) \Big] 
 \le C_{a,d} \big(\varepsilon + |z-y|^2 \big)^{2a} .
\end{equation*}
\end{proof}

\begin{proof} [Proof of Lemma \ref{lemma 4.1 Kunita}] We have set here $\eta_t(z,y) := |Z_t^z - Z_t^y|$. To ease notation in the following computations we will write $\eta$ and $\eta'$ for $\eta_t(z,y)$ and $\eta_{t'}(z',y')$ respectively and set $\xi=\eta^{-1}$ and $\xi'=\eta'^{-1}$. Observe that
\begin{align*}
\Big| \eta_t(z,y) - \eta_{t'}(z',y')\Big|^a &=\Big| \frac{1}{\xi} - \frac{1}{\xi'} \Big|^a = \Big| \frac{\xi' - \xi}{\xi\xi'} \Big|^a = |\eta|^a |\eta'|^a\, |\xi'-\xi|^a \nonumber \\
     & =  |\eta|^a |\eta'|^a\, \big| |Z_{t'}^{z'}-Z_{t'}^{y'}| - |Z_t^z-Z_t^y|\,\big|^a \nonumber \\
     & \le |\eta|^a |\eta'|^a\, \big| |Z_{t'}^{z'}-Z_{t}^{z}| + |Z_{t'}^{y'}-Z_t^y|\,\big|^a \nonumber \\
     & \le C_a |\eta|^a |\eta'|^a\,\big( |Z_{t'}^{z'}-Z_{t}^{z}|^a + |Z_{t'}^{y'}-Z_t^y|^a\big). \nonumber
\end{align*}
The first inequality above follows from the triangular inequality as in \cite[Lemma II.4.1]{Ku84}: every side of a quadrilateral is shorter that the sum of the other three. We now take expectations and use H\"older's inequality:
\begin{align*}
\E\Big[\big| \eta_t(z,y) &- \eta_{t'}(z',y')\big|^a \Big]\\
 &\le C_a\, \E\big[ |\eta|^{4a} \big]^{1/4} \E\big[ |\eta'|^{4a} \big]^{1/4} \bigg (\E\Big[ |Z_{t'}^{z'}-Z_{t}^z|^{2a}\Big]^{1/2} + \E\Big[ |Z_{t'}^{y'}-Z_{t}^{y}|^{2a}\Big]^{1/2} \bigg)\\
  & \le C_{a,d,\lambda,T} |z-y|^{-a} |z'-y'|^{-a} \Big( |z-z'|^a + |y-y'|^a \\
  & \hspace{41mm} + \big(1+ |z|^a + |z'|^a + |y|^a + |y'|^a \big) |t-t'|^{a/2} \Big).
\end{align*}
For the last inequality we have used (\ref{w INI flow,e=0}) to estimate the first two terms and Proposition \ref{Flow-continuity} for the last ones.
\end{proof}

\hh
\begin{proof} [Proof of Lemma \ref{lemma 4.2 Kunita}] Consider first the case $z,y\neq\infty$ ($\widehat{z},\widehat{y}\neq0$). In this case, proceeding just as in the proof of Lemma \ref{lemma 4.1 Kunita} we obtain the estimate
\begin{equation*}
\big| \eta_t(\widehat{z})-\eta_{t'}(\widehat{y}) \big|^a = \bigg| \frac{1+|Z_{t'}^y|-1-|Z_{t}^z|}{\big( 1+|Z_{t}^z| \big)\big( 1+|Z_{t'}^y| \big)}\bigg|^a \le \big|\eta_t(\widehat{z})\eta_{t'}(\widehat{y})\big|^a\big| Z_{t}^z-Z_{t'}^y \big|^a .
\end{equation*}
Use again H\"older's inequality, Lemma \ref{lemma 2.3 Kunita} and Proposition \ref{Flow-continuity}:
\begin{align*}
\E\Big[\big| \eta_t(\widehat{z})-\eta_{t'}(\widehat{y}) \big|^a \Big] &\le \E\Big[[\big| \eta_t(\widehat{z})\big|^{4a} \Big]^{1/4}\E\Big[[\big| \eta_{t'}(\widehat{y})\big|^{4a} \Big]^{1/4} \E\Big[ \big| Z_{t}^z-Z_{t'}^y \big|^{2a} \Big]^{1/2}\\
      &\le C (1+|z|)^{-a}(1+|y|)^{-a}\Big( |z-y|^a + \big(1+|z|^a + |y|^a \big)|t-t'|^{a/2} \Big)  \\
      &\le C \Big( |\widehat{z}-\widehat{y}|^a + |t-t'|^{a/2} \Big).
\end{align*}
The last inequality above holds if $z$ and $y$ are both finite and is a consequence of the inequality
\begin{equation*} 
(1+|z|)^{-1} (1+|y|)^{-1} |z-y| \le |\widehat{z}-\widehat{y}|=\Big| \frac{z}{|z|^2}-\frac{y}{|y|^2} \Big|.
\end{equation*} 
The case $z=\infty$ (or $y=\infty$) is even easier, since $\eta_t(\widehat{z})=0$ and Lemma \ref{lemma 2.3 Kunita} again implies
\begin{equation*}
\E\Big[\big| \eta_{t'}(\widehat{y}) \big|^a \Big] \le C_{a,d} \big(1+|y|\big)^{-a} \le C_{a,d} |\widehat{y}|^a.
\end{equation*}
The lemma is proved.
\end{proof}

\hh 
 \begin{proof}[Proof of Theorem \ref{teo flow onto}] 
Take $a>2(2d+3)$ in Lemma \ref{lemma 4.2 Kunita}. Then, by Kolmogorov's theorem, $\eta_t(\widehat{z})$ is continuous at $\widehat{z}=0$. Therefore, $Z_{t}^z$ can be extended to a continuous map from $\widehat{\R^{2d}}$ into itself for any $t\in[0,T]$ almost surely and the extension $\widehat{Z}_{t}^z(\omega)$ is continuous in $(t,z)$ almost surely. For all $\omega$ such that $\widehat{Z}$ is continuous, the map $\widehat{Z}_{t}(\omega):\widehat{\R}^{2d}\rightarrow\widehat{\R}^{2d}$ is homotopically equivalent to the identity map $\widehat{Z}_{0}(\omega)$. Proceeding by contradiction, assume that $\widehat{Z}_{t}(\omega)$ is not surjective. Then it takes values in $\widehat{\R^{2d}}$ without one point, which is a contractible space, so that is must be homotopically equivalent to a constant. This implies that also the map $Id_{\widehat{\R^{2d}}}=\widehat{Z}_{0}(\omega)$ is homotopically equivalent to a constant, and the space $\widehat{\R^{2d}}$ would be contractible, which is absurd (because, for example, $\pi_{2d}(\widehat{\R^{2d}})=\mathbb{Z}$). The contradiction found shows that the function $\widehat{Z}_{t}(\omega)$ needs to be an onto map. Since $\widehat{Z}_{t}^\infty(\omega)=\infty$, the restriction of $\widehat{Z}_{t}(\omega)$ to $\R^{2d}$ is again onto. The theorem is proved.
\end{proof}

\medskip
We now present some results on the convergence and regularity of approximations $\phi_{0,n}^t$ of the inverse flow $\phi_0^t$ associated to the SDE \eqref{eq Z}. Note that $\phi_{0,n}^t$ are solutions of SDEs with regular coefficients, see the proof of Theorem \ref{Theo regul}. These results are adapted from \cite{FF13b} and based on the following lemma on the stability of the PDE  \eqref{PDE-lambda}, which is of independent interest.

\begin{lemma}[Stability of the PDE \eqref{PDE-lambda}]\label{lemma: stability PDE}
Let $U_n$ be the unique solutions provided by Theorem \ref{PDE-1!} to the PDE \eqref{PDE-lambda} with smooth approximations $B_n(z)= (0, F_n(z))$ of $B(z) = (0, F(z))$ and some $\lambda$ large enough for \eqref{lambda n} to hold. If $F_n(z) \to F(z)$ in $L^p(\R^d_v ; H^{s}_p (\R^d_x))$, with $s,p$ as in Hypothesis \ref{hyp-holder}, then $U_n$ and $D_v U_n$ converge pointwise and locally uniformly to the respective limits. In particular, for any $r>0$ there exists a function $g(n) \to 0$ as $n\to \infty$ s.t.
\begin{align}
\sup_{z\in B_r} \big| U_n (z) - U(z) \big| \le g(n) \, ,  \nonumber\\
\sup_{z\in B_r} \big| D_v U_n (z) - D_v U(z) \big| \le g(n) \, .  \label{stab DU}
\end{align}
Moreover, there exists a $\lambda_0$ s.t. for all $\lambda>\lambda_0$
\begin{equation}\label{lambda n}
\big\| D_v U_n \big\|_{\infty} \le 1/2 \, .
\end{equation}
\end{lemma}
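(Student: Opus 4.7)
The plan is to exploit the fixed-point representation \eqref{form1} of Theorem \ref{PDE-1!}, namely
\begin{equation*}
U_n = G_\lambda (\mathbb{I} - T_\lambda^{F_n})^{-1} B_n, \qquad U = G_\lambda (\mathbb{I} - T_\lambda^F)^{-1} B,
\end{equation*}
where $T_\lambda^{G} f := G \cdot D_v G_\lambda f$ is viewed as a bounded operator on $Y := L^p(\R^d_v ; H^{s}_p(\R^d_x))$. Since $F_n \to F$ in $Y$, we have $\sup_n \|F_n\|_Y < \infty$, and one may choose $\lambda_0$ large enough so that all operators $T_\lambda^{F_n}$ and $T_\lambda^F$ have operator norm at most $1/2$ on $Y$, uniformly in $n$, for every $\lambda > \lambda_0$. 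I will then: (i) establish operator-norm convergence $T_\lambda^{F_n} \to T_\lambda^F$ on $Y$; (ii) invoke a Neumann-series argument to deduce $(\mathbb{I} - T_\lambda^{F_n})^{-1} B_n \to (\mathbb{I} - T_\lambda^F)^{-1} B$ in $Y$; and (iii) use the continuity $G_\lambda : Y \to C^1_b(\R^{2d})$ coming from Lemma \ref{corsa} and \eqref{es4} (recalling that $D_v G_\lambda h \in W^{1,p}(\R^{2d}) \hookrightarrow L^\infty$ since $p > 2d$) to transfer $Y$-convergence into uniform convergence of $U_n$ and $D_v U_n$ on all of $\R^{2d}$, which is stronger than the pointwise and locally uniform convergence claimed in \eqref{stab DU}.

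The analytic heart of the argument is step (i). Writing $(T_\lambda^{F_n} - T_\lambda^F) f = (F_n - F) \cdot D_v G_\lambda f$ and using that $sp > (2/3)(6d) = 4d > d$, so that $H^s_p(\R^d)$ is a Banach algebra, one obtains the pointwise-in-$v$ estimate
\begin{equation*}
\|(F_n - F)(\cdot, v) \cdot D_v G_\lambda f(\cdot, v)\|_{H^s_p(\R^d_x)} \le C \, \|(F_n - F)(\cdot, v)\|_{H^s_p} \, \|D_v G_\lambda f(\cdot, v)\|_{H^s_p}.
\end{equation*}
Bounding the second factor by $\sup_{v} \|D_v G_\lambda f(\cdot, v)\|_{H^s_p} \le C(\lambda) \|f\|_Y$ via Lemma \ref{neu}, and integrating the $p$-th power of the first factor in $v$, gives $\|T_\lambda^{F_n} - T_\lambda^F\|_{Y \to Y} \le C\, C(\lambda) \|F_n - F\|_Y \to 0$. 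The standard Neumann-series bound
\begin{equation*}
\|(\mathbb{I} - T_\lambda^{F_n})^{-1} - (\mathbb{I} - T_\lambda^{F})^{-1}\|_{Y \to Y} \le 4 \|T_\lambda^{F_n} - T_\lambda^F\|_{Y \to Y},
\end{equation*}
combined with $B_n \to B$ in $Y$ (which follows from the assumption $F_n \to F$ in $Y$), yields the desired convergence of $(\mathbb{I} - T_\lambda^{F_n})^{-1} B_n$ to $(\mathbb{I} - T_\lambda^F)^{-1} B$ in $Y$, hence uniform convergence of $U_n$ and $D_v U_n$ on $\R^{2d}$.

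The uniform bound \eqref{lambda n} follows directly from the estimate $\|D_v U_n\|_\infty \le C(\lambda) \|F_n\|_Y$ in \eqref{lin2} of Theorem \ref{PDE-1!}: since $\sup_n \|F_n\|_Y < \infty$ and $C(\lambda) \to 0$ as $\lambda \to \infty$, it suffices to choose $\lambda_0$ sufficiently large to guarantee $\|D_v U_n\|_\infty \le 1/2$ for every $\lambda > \lambda_0$, uniformly in $n$. The main technical point is the algebra property of $H^s_p$ under $sp > d$, which is standard (see e.g.\ Triebel or Runst–Sickel) but not proved in the excerpt; an alternative route based on the Hölder embedding $W^{1,p} \hookrightarrow C^{1 - 2d/p}_b$ available for $D_v G_\lambda f$ is borderline here, since $1 - 2d/p$ need not exceed $s$ when $s$ is close to $1$, so the algebra-based approach is preferable.
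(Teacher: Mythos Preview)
Your argument is correct, but it takes a somewhat different route from the paper's own proof. The paper argues directly at the PDE level: setting $V_n := U_n - U$, one subtracts the two equations to find that $V_n$ solves the \emph{same} equation \eqref{uno1} with drift $F$ (not $F_n$) and right-hand side
\[
B_n - B + \langle B_n - B, D_v U_n\rangle.
\]
One then applies the a~priori estimates \eqref{nht}, \eqref{lin2}, \eqref{es41} of Theorem~\ref{PDE-1!} to this equation directly, obtaining $\|V_n\|_\infty + \|D_v V_n\|_\infty \le C(\lambda)\,\|B_n - B\|_Y$ (after using \eqref{nht} for $U_n$ and the same algebra estimate you invoke to control the product term). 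Your approach instead works at the level of the fixed-point representation \eqref{form1}, proving operator-norm convergence $T_\lambda^{F_n}\to T_\lambda^F$ on $Y$ and then invoking the Neumann-series identity for the resolvents before applying Lemma~\ref{corsa}. Both routes rely on the same two ingredients---the product/algebra estimate in $H^s_p$ (used implicitly in the paper as well, in the proof of Theorem~\ref{PDE-1!}) and the continuity $G_\lambda: Y\to C^1_b$---so neither is essentially harder than the other. The paper's subtraction argument is slightly more direct and yields an explicit $g(n)=C\|B-B_n\|_Y$ in one stroke; your operator-theoretic argument makes the uniform-in-$n$ choice of $\lambda_0$ and the dependence on $\|F_n\|_Y$ more transparent. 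Your remark that the $H^s_p$ algebra property (valid since $sp>4d>d$) is used but not proved in the excerpt is accurate and applies equally to the paper's proof.
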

\begin{proof} Setting $V_n = (U_n -U)$ we write for $\lambda$ large enough (cf. \eqref{PDE-lambda})
\begin{align*}
 \lambda V_n (z)
  - \frac{1}{2} \mathrm{Tr} \big(Q D^2 V_n (z) \big) -   \langle Az, DV_n(z)\rangle  
 &-  \langle B(z), DV_n(z)\rangle  \\
  \nonumber =  B_n(z) - B(z)
 & +  \langle B_n (z) - B(z), D_vU_n (z)\rangle. 
\end{align*}
By \eqref{serve2} we know that
  \begin{gather*} 
 \sqrt{\lambda} \| D_v U_n \|_{L^p (\R^d_v ;  H^{s}_p (\R^{d}_x))}
\le C \| B_n \|_{L^p (\R^d_v ;  H^{s}_p (\R^{d}_x) ) }
\le C \| B \|_{L^p (\R^d_v ;  H^{s}_p (\R^{d}_x) ) }, \;\; n \ge 1,
\end{gather*}  
 with ${C = C(s,p,d, \|F\|_{L^p (\R^d_v ;  H^{s}_p (\R^{d}_x)}) >0}$. Hence applying \eqref{nht}, \eqref{lin2},
 \eqref{es41}
and Sobolev embedding we obtain \eqref{stab DU} with 
$
g(n)$ $ = C \| B - B_n \|_{L^p (\R^d_v ;  H^{s}_p (\R^{d}_x) ) }.  
$
 On the other hand the last assertion follows from \eqref{es41}.
\end{proof}

\medskip


\begin{lemma}[{\cite[Lemma 3]{FF13b}}]\label{lemma convergenza SDE} 
For every $R>0$, $a\ge 1$ and $z\in B_R$, 
\begin{equation*}\label{property 1}
\lim_{n\rightarrow\infty}\sup_{t\in\left[  0,T\right]  }\sup_{z\in B_R } \E\left[  \left\vert \phi_{0,n}^{t}\left( z \right)  -\phi_{0}%
^{t}\left( z \right)  \right\vert ^{a}\right]  = 0 \, .
\end{equation*}
\end{lemma}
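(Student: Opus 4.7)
The statement concerns the inverse flow $\phi_{0,n}^t$, but it is cleaner to first establish the analogous convergence for the forward flows $\phi_{t,n}(z) = Z_{t,n}^z$ associated to the regularised coefficients $F_n$, and then transfer the result to the inverses. My plan is to exploit the regularising transformation $\gamma_n(z) = z + U_n(z)$, $\gamma(z) = z + U(z)$, where $U_n, U$ are the solutions of the Kolmogorov PDE \eqref{PDE-lambda} with drifts $F_n$ and $F$ respectively. By \eqref{lambda n}, for $\lambda$ fixed large enough the maps $\gamma_n$, $\gamma$ are bi-Lipschitz homeomorphisms of $\R^{2d}$ with constants uniform in $n$, so that
\begin{equation*}
c\,|Z_{t,n}^z - Z_t^z| \;\le\; |\gamma_n(Z_{t,n}^z) - \gamma(Z_t^z)| + |U_n(Z_t^z) - U(Z_t^z)| \;\le\; C\,|Z_{t,n}^z - Z_t^z| + g(n),
\end{equation*}
on compact sets, by the local uniform convergence $U_n \to U$ from Lemma \ref{lemma: stability PDE}.

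Next, by It\^o's formula (Proposition \ref{Ito}, applied to smooth coefficients $F_n$ and, for the limit, justified exactly as in the proof of Theorem \ref{strong 1!}) the processes $\widetilde Z_{t,n} := \gamma_n(Z_{t,n}^z)$ and $\widetilde Z_t := \gamma(Z_t^z)$ satisfy SDEs of the form \eqref{eq phi(Z)}, namely
\begin{equation*}
d\widetilde Z_t = \bigl(\lambda\, U(\gamma^{-1}(\widetilde Z_t)) + A\,\gamma^{-1}(\widetilde Z_t)\bigr) dt + \bigl(DU(\gamma^{-1}(\widetilde Z_t)) + \mathbb{I}\bigr)R\,dW_t,
\end{equation*}
and similarly for $\widetilde Z_{t,n}$ with $U, \gamma$ replaced by $U_n, \gamma_n$. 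Since $DU_n$ is uniformly bounded by \eqref{lambda n} and the linear part $Az$ is Lipschitz, the drift and diffusion in these transformed SDEs are globally Lipschitz with constants uniform in $n$. I would then estimate $\E[|\widetilde Z_{t,n} - \widetilde Z_t|^a]$ by splitting the drift and diffusion differences into (i) a coefficient-stability part controlled on any ball $B_r$ by the pointwise convergences of Lemma \ref{lemma: stability PDE} (uniformly in $z \in B_R$ thanks to the moment bound of Lemma \ref{lemma 2.3 Kunita}, which confines $Z_t^z$ to a large ball with high probability), and (ii) a state-difference part $|\widetilde Z_{t,n} - \widetilde Z_t|^a$ to be absorbed by Gronwall's inequality. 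Combining with the two-sided bound above, this produces $\sup_{t\in[0,T]}\sup_{z\in B_R}\E[|Z_{t,n}^z - Z_t^z|^a] \to 0$.

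Finally, to pass from the forward to the inverse flow, I would repeat the same scheme for the backward-in-time kinetic SDE, whose coefficients satisfy the same structural hypotheses; the associated Kolmogorov PDE again admits solutions $U_n, U$ with the stability of Lemma \ref{lemma: stability PDE}, and its flow is precisely $\phi_{0,n}^t$. Alternatively, one can use the identity $\phi_{0,n}^t(\phi_{t,n}(z)) = z$ combined with the uniform equicontinuity of $\phi_{t,n}$ (inherited from the H\"older estimates of Theorem \ref{teo continuity} applied to the smooth approximations, which are uniform in $n$ because the moment bounds of Lemmas \ref{lemma exp At} and \ref{lemma 2.3 Kunita} depend only on $\|F\|_{L^p(\R^d_v;H^s_p)}$) to deduce convergence of the inverses from the forward convergence. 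The \emph{main obstacle} is precisely this last step: obtaining $L^a$-convergence for the \emph{inverse} flow uniformly in $z \in B_R$ requires either running the Gronwall argument on the backward SDE (which needs one to verify that the backward drift satisfies Hypothesis \ref{hyp-holder}) or proving uniform-in-$n$ H\"older regularity of $\phi_{t,n}^{-1}$, which ultimately rests on the uniform exponential moment bounds for the process $A_t$ of Lemma \ref{lemma exp At} applied to the regularised flows.
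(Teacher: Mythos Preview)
Your overall strategy matches the paper's: reduce to the forward flows, apply the $\gamma$-transformation, and compare the transformed processes $\gamma_n(\phi_{t,n})$ and $\gamma(\phi_t)$. The paper also dispatches the inverse-flow issue in one line by observing that the backward flow solves the same SDE with drift of opposite sign, so your worries about that step are not the real obstacle.

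The genuine gap is in your core Gronwall estimate. You assert that the transformed SDEs have drift and diffusion which are ``globally Lipschitz with constants uniform in $n$.'' For the drift $\lambda U_n(\cdot) + A(\cdot)$ this is correct, since $\|DU_n\|_\infty \le 1/2$. But the diffusion coefficient $(DU_n + \mathbb{I})R$ is \emph{not} uniformly Lipschitz: that would require a uniform bound on $\|D^2 U_n\|_\infty$, whereas Theorem \ref{PDE-1!} only yields $D D_v U_n \in L^p(\R^{2d})$ uniformly in $n$. Consequently the diffusion state-difference term $\|[DU_n(\phi_{t,n}) - DU_n(\phi_t)]R\|_{HS}^2$ cannot be bounded by $C|\phi_{t,n}-\phi_t|^2$ with a deterministic constant, and your plan to absorb it directly by Gronwall fails.

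The paper handles this exactly as in the proof of Theorem \ref{strong 1!}: one introduces the increasing process
\[
B_{t,n} = \int_0^t 1_{\{\phi_{s,n}\neq\phi_s\}}\,\frac{\big\|[DU_n(\phi_{s,n}) - DU_n(\phi_s)]R\big\|_{HS}^2}{|\gamma_n(\phi_{s,n}) - \gamma(\phi_s)|^2}\,\dd s,
\]
verifies (via Lemma \ref{lemma exp At}, whose bounds depend only on $\|D D_v U_n\|_{L^p}$ and are therefore uniform in $n$) that $\sup_n \E[e^{kB_{T,n}}]<\infty$, runs Gronwall on $\E[e^{-C B_{t,n}}|\gamma_{t,n}-\gamma_t|^a]$, and removes the exponential weight at the end as in Corollary \ref{cor Z-Y}. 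You invoke Lemma \ref{lemma exp At} only in your discussion of the inverse flow, but it is already the indispensable ingredient for the forward convergence.
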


\begin{proof}
To ease notation, we shall prove the convergence result for the forward flows $\phi_{t,n} \to \phi_{t}$. This in enough since the backward flow solves the same equation with a drift of opposite sign. Since the flow $\phi_t$ is jointly continuous in $(t,z)$, the image of $[0,T]\times B_R$ is contained in $[0,T] \times B_r$ for some $r<\infty$. Thus for $z\in B_R$, from Lemma \ref{lemma: stability PDE} we get $|U_n(\phi_{t,n}) - U(\phi_t)| \le g(n) + 1/2 |\phi_{t,n} - \phi_t|$ and $|D_v U_n(\phi_{t,n}) - D_v U(\phi_t)| \le g(n) + |D_v U_n(\phi_{t,n}) - D_v U_n(\phi_t)|$. Extending the definition \eqref{def phi} to $\gamma_n (z) = z+U_n(z)$ we have the approximate equivalence
\begin{align*}
\frac{2}{3} \Big( \big| \gamma_{n}(\phi_{t,n}) - \gamma(\phi_t)  \big| - g(n) \Big) \le  \big| \phi_{t,n} - \phi_t \big| \le 2\Big( \big| \gamma_{n}(\phi_{t,n}) - \gamma(\phi_t)  \big|  + g(n) 
\Big)\, .
\end{align*}
Therefore, it is enough prove the convergence result for the transformed flows $\gamma_{t,n} = \gamma_{n}  (\phi_{t,n}) \to \gamma (\phi_t)=\gamma_t$. Proceeding as in the proof of Theorem \ref{strong 1!} we get, for any $a\ge2$
\begin{align}
\frac{1}{a} \dd \big| \gamma_{t,n} - \gamma_{t} \big|^a \le \big| \gamma_{t,n} - \gamma_{t} \big|^{a-2} \bigg\{ &\big(  \gamma_{t,n} - \gamma_t  \big) \cdot \Big[ \lambda \big( U_n(\phi_{t,n}) - U(\phi_t) \big)  +
 A(\phi_{t,n} - \phi_t) \, \Big] \dd t     \nonumber   \\
& + \big(  \gamma_{t,n} - \gamma_t  \big) \cdot  \big(D U_n(\phi_{t,n}) - D U(\phi_t) \big)R \cdot \dd W_t    \nonumber  \\
& + C_{a,d} \big\| \big(D U_n(\phi_{t,n}) - D U(\phi_t)\big) R \big\|_{HS}^2 \, \dd t   \bigg\} \, .  \label{approx gamma}
\end{align}
The stochastic integral is a martingale. Since
$$
 \frac{|\phi_{t,n} - \phi_t|}{|\gamma_{t,n}-\gamma_t|} \le C \Big( 1 + \frac{g(n)}{|\gamma_{t,n} - \gamma_t|} \Big)\, ,
$$
 the term on the last line in \eqref{approx gamma} can be bounded using \eqref{stab DU} by a constant times $ |\gamma_{t,n} - \gamma_t|^a  \dd B_{t,n} + |\gamma_{t,n} - \gamma_t|^{a-2} g^2(n) ( \dd B_{t,n} + \dd t )$, where for every $n$ the process $B_{t,n}$ is defined as in \eqref{eq B_t} but with $D U_n(\phi_{t,n})$ and $D U_n(\phi_t)$ in the place of $DU(Z_t)$ and $DU(Y_t)$ respectively. One can show that $B_{t,n}$ share the same integrability properties of the process $A_t$ studied in Lemma \ref{lemma exp At}, uniformly in $n$, see \cite[Lemma 14]{FF13b}. Computing $\E[ e^{-B_{t,n}} |\gamma_{t,n} - \gamma_t|^a ]$ using It\^o formula and taking the supremum over $t\in[0,T]$ leads to
\begin{align*}
\sup_{t\in[0,T]} \E \Big[ e^{-B_{t,n}} \big| \gamma_{t,n} &- \gamma_t \big|^a \Big] \le 
C \E \Big[ \int_0^T e^{-B_{s,n}} \big| \gamma_{s,n} - \gamma_s \big|^a \dd s \Big] \\
&+  C g(n) \E \Big[ \int_0^T e^{-B_{s,n}} \big( \big| \gamma_{s,n} - \gamma_s \big|^{a-1} + g(n) \big| \gamma_{s,n} - \gamma_s \big|^{a-2} \big) \dd s \Big] \\
&+ g^2(n) \E \Big[ \int_0^T e^{-B_{s,n}} \big| \gamma_{s,n} - \gamma_s \big|^{a-2} \dd B_{s,n} \Big]\, .
\end{align*}
Using the integrability properties of $\phi_t, \phi_{t,n}, U(\phi_t), U_n(\phi_{t,n})$ one can see that all terms are bounded, uniformly in $n$. To conclude the proof we can pass to the limit
$$
\limsup_n \sup_{t\in[0,T]} \E \Big[ e^{-B_{t,n}} \big| \gamma_{t,n} - \gamma_t \big|^a \Big] \le C\int_0^T \limsup_n \sup_{t\in[0,s]} \E \Big[ e^{-B_{t,n}} \big| \gamma_{t,n} - \gamma_t \big|^a \Big] \dd s \, ,
$$
apply Gr\"onwall's lemma and proceed as in Corollary \ref{cor Z-Y} to get rid of the exponential term.
\end{proof}

\begin{lemma}[{\cite[Lemma 5]{FF13b}}]\label{lemma Df}
For every $a\geq1$, there exists $C_{a,d,T}>0$ such that
\begin{equation}
\sup_{t\in\left[  0,T\right]  }\sup_{z\in \R^{2d} } \E\big[ \big\vert D \phi_{0,n}^{t}( z )  \big\vert ^{a} \big]  \le C_{a,d,T}  \label{property 2}
\end{equation}
uniformly in $n$.
\end{lemma}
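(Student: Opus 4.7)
Since $F_n$ is smooth, $\phi_{t,n}$ is a classical $C^1$ stochastic flow of diffeomorphisms a.s., so $\phi_{0,n}^t$ is classically differentiable and one has the pointwise chain-rule identity $D\phi_{0,n}^t(z)=[D\phi_{t,n}(\phi_{0,n}^t(z))]^{-1}$. The plan is to reproduce the It\^o scheme behind Theorem \ref{teo SDE-differentiable flow} at the level of the approximating system, replacing $F$, $U$ and $A_t$ by their regularized counterparts $F_n$, $U_n$ and $A_t^{(n)}$, and to check that all constants are independent of $n$ thanks to Lemma \ref{lemma: stability PDE}.

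The crucial observation is that every quantitative input needed in the proofs of Theorem \ref{strong 1!} and Theorem \ref{teo SDE-differentiable flow} is available uniformly in $n$. Indeed, Lemma \ref{lemma: stability PDE} guarantees $\|U_n\|_\infty+\|DU_n\|_\infty\le C$ with $\|DU_n\|_\infty\le 1/2$, and $\|D D_v U_n\|_{L^p(\R^{2d})}\le C$, uniformly in $n$; together with the uniform bound on $\|F_n\|_{L^p(\R^d_v;H^s_p(\R^d_x))}$ this ensures (a) that $\gamma_n(z)=z+U_n(z)$ is bi-Lipschitz with $n$-independent constants; (b) that Lemma \ref{lem Z-Lp} and Proposition \ref{Khas} apply to $\phi_{s,n}$ with uniform constants; and (c) that the auxiliary process $A_t^{(n)}$ defined as in Lemma \ref{lemma exp At} with $U$ replaced by $U_n$ has exponential moments bounded uniformly in $n$ and in the starting points, since the Khasminskii-type estimate reduces to $|D D_v U_n|^2\in L^{p/2}(\R^{2d})$, which is admissible because $p/2>3d>2d$.

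With these ingredients, I would first replay the difference-quotient argument of Theorem \ref{teo SDE-differentiable flow} on the forward flow $\phi_{t,n}$: the $n$-uniform version of \eqref{bound theta} gives $\sup_n\sup_{t,z,h}\E[|\theta_{t,n}^h(z)|^a]\le C_a$, and pointwise passage to the limit $h\to 0$ (allowed since $F_n$ is smooth) yields $\sup_n\sup_{t,z}\E[|D\phi_{t,n}(z)|^a]\le C_a$. To transfer this to the inverse flow, I would analyze the transformed derivative $\tilde J_{t,n}(z):=D\gamma_n(\phi_{t,n}(z))\,D\phi_{t,n}(z)=D[\gamma_n\circ\phi_{t,n}](z)$, which by differentiating \eqref{eq phi(Z)} solves a linear It\^o equation of the form
\[
d\tilde J_{t,n} = \bigl(A+\lambda DU_n(\phi_{t,n}(z))\bigr) H_n(\phi_{t,n}(z))\,\tilde J_{t,n}\,\dd t + D(D_v U_n)(\phi_{t,n}(z))\,H_n(\phi_{t,n}(z))\,\tilde J_{t,n}\,\dd W_t,
\]
with $H_n=(I+DU_n)^{-1}$ uniformly bounded; the key point is that the would-be rough term $DB_n$ cancels via the PDE \eqref{PDE-lambda}, leaving a uniformly bounded drift coefficient and a diffusion coefficient whose Hilbert-Schmidt norm is controlled by $|D D_v U_n(\phi_{t,n}(z))|$. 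Applying It\^o to $|\tilde J_{t,n}^{-1}|^a$ and running the Gr\"onwall-Khasminskii argument of Corollary \ref{cor Z-Y} with $A_t^{(n)}$ yields $\sup_n\sup_{t,y}\E[|\tilde J_{t,n}^{-1}(y)|^a]\le C_a$. Combined with the uniform bi-Lipschitz character of $\gamma_n$ and the pointwise identity $D\phi_{0,n}^t(z)=[D\phi_{t,n}(\phi_{0,n}^t(z))]^{-1}$, this gives \eqref{property 2}.

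The main obstacle is the last step, where the random argument $\phi_{0,n}^t(z)\in\calF_t$ depends on the same Brownian path appearing inside $D\phi_{t,n}(\cdot)^{-1}$, so a uniform-in-$y$ bound on $\E[|D\phi_{t,n}(y)^{-1}|^a]$ cannot be directly evaluated at $y=\phi_{0,n}^t(z)$. I would handle this either by decomposing the derivative flow along a partition of $[0,t]$ and exploiting the independence of Brownian increments via Kunita's two-parameter smooth flow $\phi_{s,t,n}$, or equivalently by running the difference-quotient argument pathwise on $\vartheta_{t,n}^h(z):=h^{-1}(\phi_{0,n}^t(z+he)-\phi_{0,n}^t(z))$: conditionally on $\calF_t$ one may apply the negative-exponent version of Lemma \ref{lemma 2.4 Kunita} using the pathwise $C^1$ diffeomorphism $\phi_{t,n}(\cdot,\omega)$ with the (now deterministic-in-$\omega$) arguments $u(\omega),w(\omega):=\phi_{0,n}^t(z+he)(\omega),\phi_{0,n}^t(z)(\omega)$, and then integrate against the Wiener measure.
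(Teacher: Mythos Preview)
Your preliminary observations on uniform-in-$n$ constants (boundedness of $U_n$, $DU_n$, $DD_vU_n$ in the right spaces via Lemma \ref{lemma: stability PDE}, and the ensuing uniform exponential moments for $A_t^{(n)}$) are exactly right and are the substantive analytic input.

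Where you diverge from the paper is in the strategy for the inverse flow. You aim to bound $D\phi_{t,n}$, then invert the Jacobian, then evaluate at the anticipating point $\phi_{0,n}^t(z)$; you correctly flag that the last step is problematic because the random argument depends on the whole path. The paper avoids this detour entirely: as already noted in the proof of Lemma \ref{lemma convergenza SDE}, the backward flow $\phi_{0,n}^t$ is itself the flow of an SDE of the same structural form as \eqref{eq Z2} (drift $-b_n$, time-reversed Brownian motion). One therefore simply reruns the differential-inequality computation of Theorem \ref{teo SDE-differentiable flow} directly on the derivative of that flow. Concretely, with $\xi_{t,n}:=D\gamma_n(\phi_{t,n})$ one obtains, exactly as in \eqref{bound derivative},
\[
\dd\big(e^{-C_1B_{t,n}}|\xi_{t,n}|^a\big)\le e^{-C_1B_{t,n}}\big(C_2|\xi_{t,n}|^a\,\dd t+\dd M_t\big),\qquad B_{t,n}=\int_0^t|DD_vU_n(\phi_{s,n})|^2\,\dd s,
\]
and Gr\"onwall plus the uniform exponential moments of $B_{t,n}$ (your point (c)) give the result. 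No matrix inversion, no anticipating argument, no partition of $[0,t]$.

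Your final alternative suggestion---running the difference-quotient argument on $\vartheta_{t,n}^h(z)=h^{-1}(\phi_{0,n}^t(z+he)-\phi_{0,n}^t(z))$---is morally the paper's approach, but framed awkwardly: the conditional/pathwise use of Lemma \ref{lemma 2.4 Kunita} is not needed, because $\phi_{0,n}^t$ already solves a forward It\^o SDE in its own right, so the estimate is obtained by direct It\^o calculus, not by conditioning.
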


\begin{proof} Let us show the bound for the forward flows $\phi_{t,n}$. These are regular flows: let $\theta_{t,n}$ and $\xi_{t,n}$ denote the weak derivative of $D \phi_{t,n}$ and $D \gamma_{t,n} = D \gamma_n( \phi_{t,n})$, respectively. They are equivalent in the sense of \eqref{xi-theta}, so we shall prove the bound for $\xi_{t,n}$ instead of $\theta_{t,n}$. Proceeding as in the proof of Theorem \ref{teo SDE-differentiable flow} we obtain as in \eqref{bound derivative}
\begin{align*}
\dd e^{-C_1 B_{t,n}} \big| \xi_{t,n} \big|^a  \le e^{-C_1 B_{t,n}} \Big[ C_2 \big| \xi_{t,n} \big|^a \dd t + \dd M_t \Big] \, ,
\end{align*}
where the process $B_{t,n}$ is simply given by $\int_0^t | D D_v U_n(\phi_{s,n}) |^2 \dd s$. We can integrate, take expected values, the supremum over $t\in[0,T]$ and apply Gr\"onwall's inequality to get
$$  \sup_{t\in[0,T]} \E \big[ e^{-C_1 B_{t,n} } | \xi_{t,n}|^a \big]  \le C_{T} |\xi_{0,n}|^a = C_{a,d,T}\, . $$
Observe that this bound is uniform in $n$ and $z\in \R^{2d}$. Proceeding as in Corollary \ref{cor Z-Y} we can get rid of the exponential term and obtain the desired uniform bound on $\xi_{t,n}$.
\end{proof}

\section*{Acknowledgements}

The authors Ennio Fedrizzi and Julien Vovelle are supported by the LABEX MILYON (ANR-10-LABX-0070) of Université de Lyon, within the program ``Investissements d'Avenir" (ANR-11-IDEX- 0007) operated by the French National Research Agency (ANR).

The Author Enrico Priola is supported by the Italian PRIN project 2010MXMAJR.

The author Julien Vovelle is supported by the ANR STOSYMAP (ANR-11-BS01-0015) and the ANR STAB (ANR-12-BS01-0019).

\end{document}